\DeclareMathAlphabet{\mathbbold}{U}{bbold}{m}{n}	
\numberwithin{equation}{section}
\newcommand{\beq}{\begin{equation}}
\newcommand{\eeq}{\end{equation}}
\newcommand{\bea}{\begin{eqnarray}}
\newcommand{\eea}{\end{eqnarray}}
\newcommand{\beas}{\begin{eqnarray*}}
\newcommand{\eeas}{\end{eqnarray*}}
\newtheorem{theorem}{Theorem}[section]
\newtheorem{definition}[theorem]{Definition}
\newtheorem{proposition}[theorem]{Proposition}
\newtheorem{corollary}[theorem]{Corollary}
\newtheorem{lemma}[theorem]{Lemma}
\newtheorem{remark}[theorem]{Remark}
\newtheorem{example}[theorem]{Example}
\newtheorem{examples}[theorem]{Examples}
\newtheorem{foo}[theorem]{Remarks}
\newcommand{\V}{\mathcal V}
\newcommand{\M}{\mathbb M}
\newcommand{\R}{\mathbb R}
\newcommand{\ve}{\varepsilon}
\newcommand{\Ric}{\mathrm{Ric}}
\newcommand{\ch}{\mathcal H}
\newcommand{\Tor}{\mathrm{Tor}}
\newcommand{\hor}{\mathcal{H}}
\newcommand{\ver}{\mathcal{V}}
\newcommand{\spn}{\mathrm{span}}
\DeclareMathOperator{\Cut}{\mathbf{Cut}}
\DeclareMathOperator{\diam}{\mathrm{diam}}
\DeclareMathOperator{\rank}{\mathrm{rank}}
\newcommand{\sas}{\mathrm{Sas}}
\newcommand{\rie}{\mathrm{Riem}}
\newcommand{\hty}{\mathrm{Htype}}
\newcommand{\End}{\mathrm{End}}
\newcommand{\Addresses}{{
  \bigskip
  \footnotesize

\noindent  F.~Baudoin, \textsc{Department of Mathematics, Aarhus University
Ny Munkegade 118
8000 Aarhus C}\par\nopagebreak
\noindent  \textit{E-mail address}: \texttt{fbaudoin@math.au.dk}

  \medskip

\noindent  E.~Grong, \textsc{University of Bergen, Department of Mathematics,  P.O. Box 7803, 5020 Bergen, Norway}\par\nopagebreak
\noindent  \textit{E-mail address}, E.~Grong: \texttt{erlend.grong@uib.no}

  \medskip

\noindent L. ~Rizzi, \textsc{SISSA, Trieste, Italy}\par\nopagebreak
\noindent\textsc{Univ. Grenoble Alpes, IF, F-38000 Grenoble, France}\par\nopagebreak
\noindent\textsc{CNRS, IF, F-38000 Grenoble, France}\par\nopagebreak
\noindent  \textit{E-mail address}: \texttt{lrizzi@sissa.it}

 \medskip

\noindent  S. ~Vega-Molino, \textsc{Sjømilitær Teknologi, FHS Sjøkrigsskolen
Sjøkrigsskoleveien 32
5165 Laksevåg, Norway}\par\nopagebreak
\noindent  \textit{E-mail address}, S.~Vega-Molino: \texttt{svegamolino@mil.no} 

\medskip

\noindent Data sharing not applicable to this article as no datasets were generated or analysed during the current study.
}}
\title{Comparison theorems on H-type sub-Riemannian manifolds}
\author{Fabrice Baudoin, Erlend Grong, Luca Rizzi, Sylvie Vega-Molino}
\begin{document}

\maketitle

\begin{abstract}
On H-type sub-Riemannian manifolds we establish sub-Hessian and sub-Laplacian comparison theorems which are uniform for a family of approximating Riemannian metrics converging to the sub-Riemannian one. We also prove a sharp sub-Riemannian Bonnet-Myers theorem that extends to this general setting results previously proved on contact and quaternionic contact manifolds.
\end{abstract}

\setcounter{tocdepth}{2}
\tableofcontents

\section{Introduction}

Let $(\M,g)$ be a Riemannian manifold, equipped with an orthogonal splitting $T\M = \hor \oplus \ver$ of the tangent bundle. The sub-bundles $\hor$ and $\ver$ are called the \emph{horizontal} and \emph{vertical bundles}, respectively. The \emph{canonical variation} of the metric, introduced in \cite[Ch.\ 9]{Besse} in the context of submersions, is the one-parameter family of Riemannian metrics defined by
\begin{equation}\label{eq:canvarintro}
g_\ve = g_\hor \oplus \frac{1}{\ve} g_\ver, \qquad \forall \ve >0,
\end{equation}
where $g_\hor$ and $g_\ver$ denote the restriction of $g$ to the corresponding sub-bundle.

Assuming that $\hor$ is completely non-holonomic, the sequence of Riemannian distances $d_\ve$ converges, as $\ve \to 0$, to the Carnot-Carath\'eodory metric $d_0$ of the sub-Riemannian manifold $(\M,\hor,g_\hor)$. Conversely, the metric structure of any sub-Riemannian manifold can be obtained as a limit of Riemannian metrics in this way. We emphasize, however, that that the choice of the Riemannian extension is not unique.

Even though the convergence $d_\ve \to d_0$ holds in the $C^\infty$ compact-open topology outside of the cut locus, it is not hard to check that the Riemannian curvature of $g_\ve$ is unbounded below as $\ve \to 0$, provided that $\hor$ is completely non-holonomic (see e.g.\ \cite[Ch.\ 2.4]{CDPT-Introduction} for the case of the Heisenberg group). For this reason, several classical geometrical estimates based on the theory of Riemannian curvature lower bounds do not bring, at least directly, any insight for the sub-Riemannian limit structure.

To better illustrate this fact, let us consider as an example the classical Laplacian comparison theorem. This is a fundamental result in Riemannian geometry, directly related to volume comparison theorems and a special case of the Rauch comparison theorem. It states that on a Riemannian manifold $(\M,g)$ with Ricci curvature not smaller than $(n-1)\kappa$, the Laplacian of the distance $r(\cdot)=d(x,\cdot)$ from any $x\in \M$ satisfies
\begin{equation}\label{eq:lapcompintro}
\Delta r \leq \begin{cases}
(n-1)\sqrt{\kappa}\cot(\sqrt{\kappa}r) & \kappa >0, \\
\tfrac{n-1}{r} & \kappa =0,\\
(n-1)\sqrt{|\kappa|}\coth(\sqrt{|\kappa|}r) & \kappa < 0,
\end{cases}
\end{equation}
outside of the cut locus of $x$. Let us consider the above estimate for the family $r_\ve$ corresponding to the canonical variation $(\M,g_\ve)$. In this case, even though the left hand side of \eqref{eq:lapcompintro} converges as $\ve \to 0$ to the horizontal Laplacian of the sub-Riemannian distance $r_0$ from $x$, its right hand side tends to $+\infty$, yielding no information.

Let us stress that this is not a pathology of the canonical variation approximation scheme. In fact, it has been known for some time now that one cannot approximate in the Gromov-Hausdorff sense a sub-Riemannian structure through a sequence of Riemannian structures with the same dimension and (Ricci) curvature bounded from below \cite{Juillet1,Juillet2,RS-noCD}.

Nevertheless horizontal Laplacian comparison theorems do indeed hold, at least for particular classes of sub-Riemannian structures. For example, any complete $2d+1$-dimensional Sasakian structure with non-negative horizontal Tanaka-Webster curvature satisfies
\begin{equation}\label{eq:exampleintro}
\Delta_\hor r_0 \leq \frac{2d+2}{r_0},
\end{equation}
as proven in \cite{AAPL-3D,AAPL-Bishop,ZLL-Sasakian} in increasing degrees of generality, in terms of a measure contraction property essentially equivalent to \eqref{eq:exampleintro}. Notice that the aforementioned class of structures includes the Heisenberg groups and the Hopf fibrations, for which \eqref{eq:exampleintro} is sharp. 

Another hint comes from the class of $4d+3$-dimensional $3$-Sasakian structures. In this case, it has been proven in \cite{BR-BE} that, under suitable (possibly negative) curvature bounds, the estimate \eqref{eq:exampleintro} holds true with a constant equal to $4d+8$, which is sharply attained in the case of the quaternionic Hopf fibration.

\medskip

We remark however that all the aforementioned results are purely sub-Riemannian, and do not come with similar estimates for the whole family $\Delta_\hor r_\ve$, for $\ve > 0$. In \cite{BGKT17}, with methods similar to those used in the present paper, the authors filled this gap for the case of Sasakian foliations with non-negative horizontal Tanaka-Webster curvature. Among other results, they proved an uniform version of \eqref{eq:exampleintro}, namely
\begin{equation}\label{eq:exampleintro2}
\Delta_\hor r_\ve \leq \frac{N(\ve)}{r_\ve},
\end{equation}
valid for all $\ve \geq 0$, where $N(\ve)$ is a positive an asymptotically sharp constant, which reduces to \eqref{eq:exampleintro} as $\ve \to 0$. Notice that the validity of \eqref{eq:exampleintro2} for the canonical left-invariant metric on the Heisenberg group for all $\ve \geq 0$ was observed in \cite{RifCarnot} in terms of equivalent measure contraction properties, whereas the validity of analogous uniform estimates for general structures was left as an open problem.

\subsection*{Uniform comparison theorems}

The goal of this paper is to develop comparison theorems for the horizontal Laplacian of the distance from a given point $r_\ve(\cdot) :=d_\ve(x,\cdot)$, of the form
\begin{equation}\label{eq:inequalityintro}
\Delta_\hor r_\ve \leq F_\ve(r_\ve),  \qquad 
\forall \ve >0,
\end{equation}
which should hold outside of the cut locus, and under appropriate curvature-type assumptions. Here, $F_\ve : (0,\infty)\to \R$ is some continuous model function, depending on the particular class of structures under investigation. A fundamental requirement is that the right hand side of \eqref{eq:inequalityintro} admits a finite limit as $\ve \to 0$, yielding thus a truly sub-Riemannian comparison theorem for $r_0$. This is the case, for example, when the model function $F_\ve(\cdot)$ does not depend explicitly on $\ve$; we talk in this case of \emph{uniform} comparison theorems.

To achieve our goal, we make use of the variational theory of geodesics for the canonical variation of the metric, developed first in \cite{BGKT17} in the context of Sasakian foliations. This leads to a comparison principle, Theorem \ref{t:comparison}, which will be used throughout the paper. Such a theory can be applied a priori to any Riemannian structure $(\M,g)$ equipped with an orthogonal splitting. As it is natural to expect, when $g=g_\ve |_{\ve =1}$, further assumptions are required to ensure the uniformity of the result and, in turn, the existence of a sub-Riemannian limit.

\medskip

For this reason, in Section \ref{s:uniformcomparison}, we restrict our attention to the case of H-type foliations with parallel horizontal Clifford structure, which we briefly introduce here (cf.\ Section \ref{s:definitions} and \cite{BGMR-Htypefoliations}, where those structures were introduced, for further results). These are a general class of Riemannian foliations $(\M,g)$ with bundle-like metric and totally geodesic leaves. The tangent space to the leaves at each point defines the vertical sub-bundle $\ver$, and its orthogonal complement defines the horizontal sub-bundle $\hor$. Letting $J_Z: \Gamma(\hor)\to \Gamma(\hor)$ be the endomorphism defined by
\begin{equation}
g(J_Z X,Y) = g(Z,[Y,X]), \qquad \forall X,Y\in\Gamma(\hor),
\end{equation}
the H-type condition is the requirement that $J^2_Z =-\|Z\|^2 \mathbbold{1}_\hor$, for all $Z \in \Gamma(\ver)$. The parallel horizontal Clifford structure condition is instead an assumption on the covariant derivative of $J$, extending to more general foliations the fact that, for Sasakian structures, $J$ is parallel with respect to an appropriate metric connection. 

Let us mention that H-type foliations admitting a parallel horizontal Clifford structure have been classified in \cite{BGMR-Htypefoliations}, and include in particular Sasakian, $3$-Sasakian, negative $3$-Sasakian structures, H-type Carnot groups, and torus bundles over hyperk\"ahler manifolds.

We introduce a decomposition of the tangent space $T_{\gamma(t)}\M$ along any minimizing $g_\ve$-geodesic, such that the restriction of the horizontal Hessian of $r_\ve$ on these subspaces is controlled by suitable curvature invariants, uniformly bounded as $\ve \to 0$, cf.\ Section \ref{s:splitting}. An important role is played by the natural Bott connection $\nabla$ of the foliation, and an associated family of metric connections $\hat{\nabla}^\varepsilon$, first introduced in \cite{Bau14,BKW}. In this setting we obtain a class of \emph{uniform} comparison theorems for the Hessian of $r_\ve$, along the subspaces of the aforementioned decomposition, cf.\ Theorems \ref{t:geoddir}, \ref{t:Riemannian-sect}, \ref{t:Riemannian-average}, \ref{t:SasakianZpar}, and \ref{t:SasakianZperp}. As a consequence we obtain a uniform horizontal Laplacian comparison theorem for $r_\ve$, cf.\ Theorem \ref{t:horlapuniform}, and its corresponding sub-Riemannian limit, cf.\ Theorem \ref{t:horlapuniform0}.

\medskip

In Section \ref{s:sharpBM} we study a more general class of structures, which are not necessarily foliations, but still satisfy the H-type condition. In this greater generality we obtain a sharp sub-Riemannian Bonnet-Myers comparison theorem valid only in the limit $\ve \to 0$, cf.\ Theorem \ref{t:BMII} . The latter, in particular, recovers and extends similar results obtained in the recent literature, with an unified and simpler proof. We refer in particular to \cite[Thm.\ 1.7]{BRcontact}, for contact structures \cite[Thm.\ 1]{BI17} for quaternionic contact structures, \cite[Cor.\ 9]{RS-3sas} for $3$-Sasakian structures, all of which are based on the intrinsic comparison theory developed in \cite{BR-comparison}. See also \cite[Sec.\ 3.3]{Hug95} for similar Bonnet-Myers type results for three-dimensional contact structures. Let us also note that from Corollary 2.21 in \cite{BGMR-Htypefoliations},  a non-sharp Bonnet-Myers theorem on any H-type foliation may be obtained under weaker conditions (a positive lower bound on the horizontal Ricci curvature) by using the Bochner's method based on the theory of generalized curvature dimension inequalities developed in \cite{BG17}.

\medskip

In order to pass to the limit for $\ve \to 0$ in the aforementioned results, we prove the $C^\infty$ convergence of $d_\ve \to d_0$ outside of the sub-Riemannian cut locus which, to our knowledge, is new and of independent interest (cf.\ Proposition \ref{p:cutapprox}).

\paragraph*{Acknowledgements}
F.\ B.\ was supported in part by the NSF grants DMS-1660031, DMS-1901315 and the Simons Foundation. S.\ V.-M.\ was supported in part by the NSF grant DMS-1901315 (PI F. Baudoin). E.\ G.\ was supported in part by the Research Council of Norway (project number 249980/F20). L.\ R.\ was supported by the Grants ANR-15-CE40-0018 and ANR-18-CE40-0012 of the ANR, and by a public grant as part of the Investissement d'avenir project, reference ANR-11-LABX-0056-LMH, LabEx LMH, in a joint call with the ``FMJH Program Gaspard Monge in optimization and operation research''. This project has received funding from the European Research Council (ERC) under the European Union’s Horizon 2020 research and innovation programme (grant agreement GEOSUB, No. 945655)

\section{Preliminaries: H-type sub-Riemannian manifolds}\label{s:definitions}

\subsection{Hladky connection and canonical variation}

We consider a general Riemannian structure $g$ on $\M$ together with a vector bundle orthogonal splitting $T\M = \hor \oplus \ver$. The sub-bundles $\hor$ and $\ver$ are referred to as the set of horizontal and vertical direction, respectively. In this setting there exists a canonical $g$-metric connection $\nabla$ preserveing the splitting, the connection defined by Hladky in \cite{Hladky}.

\begin{proposition}[\cite{Hladky}]
There exists a unique metric connection $\nabla$ on $\M$ such that:
\begin{itemize}
\item $\mathcal{H}$ and $\mathcal{V}$ are $\nabla$-parallel, i.e. for every $X \in \Gamma(\hor), Y \in \Gamma(T\M), Z \in \Gamma (\V)$,
\begin{equation}
\nabla_Y X \in \Gamma(\hor), \quad \nabla_Y Z \in \Gamma(\V).
\end{equation}
\item The torsion $T$ of $\nabla$ satisfies $T(\hor,\hor) \subset \V$ and $T(\V,\V) \subset \hor$.
\item For every $X,Y \in \Gamma(\hor)$, $V,Z \in \Gamma(\V)$, 
\begin{equation}
\langle T(X,Z),Y \rangle =\langle T(Y,Z),X \rangle, \qquad   \langle T(Z,X), V \rangle =\langle T(V,X), Z \rangle.
\end{equation}
\end{itemize}
\end{proposition}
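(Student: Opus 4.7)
The plan is a Koszul-style argument: I would first derive explicit formulas for $\langle \nabla_A B, C\rangle$ forced by the three listed properties (yielding uniqueness), and then take these formulas as the definition of $\nabla$ (yielding existence). A useful preliminary reduction is that, since $\hor$ and $\ver$ are complementary and $\nabla$-parallel, $\nabla_A B$ lies in the same sub-bundle as $B$; hence it suffices to determine $\langle \nabla_A B, C\rangle$ when $B$ and $C$ are in the same sub-bundle. This leaves four scenarios, according to whether $A$ is horizontal or vertical and whether the pair $(B, C)$ is horizontal or vertical.

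In the two homogeneous cases (all three arguments in $\hor$, or all three in $\ver$), I would proceed as in the derivation of the Levi-Civita Koszul formula: combine cyclic permutations of $A\langle B, C\rangle = \langle \nabla_A B, C\rangle + \langle B, \nabla_A C\rangle$ with the torsion identity $\nabla_A B - \nabla_B A = [A,B] + T(A,B)$, then discard all pairings $\langle T(\cdot,\cdot), \cdot\rangle$ which vanish by the hypothesis $T(\hor,\hor) \subset \ver$ (respectively $T(\ver,\ver) \subset \hor$). The outcome is the usual Koszul expression, which determines $\langle \nabla_A B, C\rangle$ uniquely.

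The delicate cases are the mixed ones. To pin down $\nabla_Y X \in \Gamma(\hor)$ for $X \in \Gamma(\hor)$ and $Y \in \Gamma(\ver)$, I would pair it against a test field $Z \in \Gamma(\hor)$. Metric compatibility yields the symmetric relation $\langle \nabla_Y X, Z\rangle + \langle \nabla_Y Z, X\rangle = Y\langle X, Z\rangle$, while the torsion identity, combined with $\nabla_X Y, \nabla_Z Y \in \Gamma(\ver)$, rewrites $\langle T(X,Y), Z\rangle$ and $\langle T(Z,Y), X\rangle$ purely in terms of $\nabla_Y X$, $\nabla_Y Z$ and Lie brackets; the first prescribed symmetry $\langle T(X,Y), Z\rangle = \langle T(Z,Y), X\rangle$ then produces the antisymmetric relation $\langle \nabla_Y X, Z\rangle - \langle \nabla_Y Z, X\rangle = \langle [Y,X], Z\rangle + \langle [Z,Y], X\rangle$. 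Adding,
\begin{equation}
2\langle \nabla_Y X, Z\rangle = Y\langle X, Z\rangle + \langle [Y,X], Z\rangle + \langle [Z,Y], X\rangle.
\end{equation}
An entirely analogous argument, invoking the second prescribed symmetry with $V, Z \in \Gamma(\ver)$, determines $\nabla_X Y \in \Gamma(\ver)$ for $X \in \Gamma(\hor)$, $Y \in \Gamma(\ver)$. This is the step where the specific symmetries of $T$ actually enter, and I expect it to be the main technical point.

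For existence, I would take the formulas produced above as the definition of $\nabla_A B$ in each of the four cases, extend by $C^\infty(\M)$-linearity in $A$ and by the Leibniz rule in $B$, and then verify the required properties by direct computation: the parallelism of $\hor$ and $\ver$ is built in by construction; metric compatibility follows from the symmetric identities used to define $\nabla$; and the torsion conditions follow from the antisymmetric ones. The main obstacle is bookkeeping: one must check that the right-hand sides of the defining formulas are genuinely tensorial in the test arguments (i.e., free of derivatives of the test fields) so that they assemble into a globally well-defined connection on $\M$.
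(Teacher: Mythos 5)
Your Koszul-style argument is correct and recovers exactly the explicit case-by-case formula the paper records immediately after the Proposition (which it cites to Hladky without proof): the two homogeneous cases yield the projected Levi-Civita connection $\pi_\hor\nabla^g$, $\pi_\ver\nabla^g$, and your mixed-case identity
\begin{equation}
2\langle\nabla_Y X, Z\rangle = Y\langle X,Z\rangle + \langle[Y,X],Z\rangle + \langle[Z,Y],X\rangle \qquad (Y\in\Gamma(\ver),\ X,Z\in\Gamma(\hor))
\end{equation}
is precisely $\langle\pi_\hor([Y,X]) + A_Y X,\,Z\rangle$ once you expand $A_Y X$ via the Lie derivative of the metric. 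So the approach coincides with the one underlying the paper's statement; the paper itself supplies only the resulting formula, not the derivation.
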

The \emph{canonical variation} of the metric, introduced in \cite[Ch.\ 9]{Besse} in the context of submersions, is the one-parameter family of Riemannian metrics defined by
\begin{align}\label{cv}
g_{\varepsilon}=g_\mathcal{H} \oplus  \frac{1}{\varepsilon }g_{\mathcal{V}}, \qquad \varepsilon >0,
\end{align}
where $g_{\mathcal{H}}$ and $g_{\mathcal{V}}$ denote the restriction of the Riemannian metric $g$ to the horizontal and vertical sub-bundle, respectively.

For any $X \in \Gamma(T\M)$, the symbol $\mathcal{L}_X$ denotes the Lie derivative in the direction of~$X$. We write $X_\hor = \pi_\hor (X) \in \Gamma(\hor)$ and $X_\ver =\pi_\ver(Y) \in \Gamma(\ver)$ for the horizontal and vertical projections, respectively.
We define the $(2,1)$ tensor $A$ by the formula:
\begin{equation}
g(A_X Y, Z) = \frac{1}{2} (\mathcal{L}_{X_\ver} g)(Y_\hor,Z_\hor) + \frac{1}{2}(\mathcal{L}_{X_\hor}g)(Y_\ver,Z_\ver).
\end{equation}
The following properties hold:
\begin{equation}
A_\ver \ver = 0,\qquad A_{\ver} \hor \subseteq \hor, \qquad A_\hor \hor = 0,\qquad A_{\hor} \ver \subseteq \ver.
\end{equation}

The Hladky connection can be expressed in terms of the Levi-Civita one $\nabla^g$ by
\begin{equation}
\nabla_X Y =
\begin{cases}
\pi_{\mathcal{H}} ( \nabla_X^{g} Y) &  X,Y \in \Gamma(\mathcal{H}), \\
\pi_{\mathcal{H}} ( [X,Y]) +A_X Y & X \in \Gamma(\mathcal{V}), \quad  Y \in \Gamma(\mathcal{H}), \\
\pi_{\mathcal{V}} ( [X,Y]) + A_X Y & X \in \Gamma(\mathcal{H}),\quad  Y \in \Gamma(\mathcal{V}), \\
\pi_{\mathcal{V}} ( \nabla_X^{g} Y) & X,Y \in \Gamma(\mathcal{V}).
\end{cases}
\end{equation}
Finally, the torsion $T$ of the Hladky connection $\nabla$ is given by
\begin{equation}
T(X,Y) = \begin{cases}
- \pi_\ver([X,Y]) & X,Y \in \Gamma(\hor), \\
A_X Y - A_Y X &  X \in \Gamma(\hor), Y \in \Gamma(\V), \\
- \pi_\hor([X,Y]) & X,Y \in \Gamma(\ver).
\end{cases}
\end{equation}
The above formulas show that the Hladky connection defined relative to $g_\ve$ in \eqref{cv} will coincide for all choices of $\ve >0$.

\begin{definition}
The complement $\V$ is called metric if $A=0$.
\end{definition}

Let $(\M,g)$ be a Riemannian manifold, equipped with an orthogonal splitting $T\M = \hor \oplus \ver$. If $\ver$ is integrable and metric, then $(\M,\hor,g)$ is a Riemannian foliation with bundle-like metric and totally geodesic leaves, tangent to $\ver$. We simply refer to these structures as \emph{totally geodesic foliations}.  In this foliation context, Hladky connection is referred to as the Bott connection (see \cite{BGMR-Htypefoliations}).

\subsection{The H-type and the \texorpdfstring{$J^2$}{J2} conditions}

We introduce a condition  that will play a prominent role in the following. For any $Z \in T\M$, let $J_Z :  T\M \to T\M$ be defined as 
\begin{equation} \label{Jmap}
\langle J_Z X,Y\rangle = \langle Z,T(X,Y)\rangle, \qquad \forall X,Y \in T\M.
\end{equation}
We remark that $J$ is defined for any Riemannian manifold $(\M,g)$ equipped with an orthogonal splitting $T\M = \hor \oplus \ver$.
\begin{remark}
If $(\M,g)$ is a totally geodesic foliation, it holds
\begin{equation}
J_\hor =0, \qquad J_\ver \ver = 0,\qquad J_{\ver} \hor \subseteq \hor.
\end{equation}
\end{remark}

\begin{definition}
We say that the H-type condition is satisfied if 
\begin{equation}
J_Z^2 = - \| Z \|^2 \mathbbold{1}_{\mathcal{H}}, \qquad \forall Z \in \Gamma(\ver).
\end{equation}
\end{definition}

\begin{definition}[\cite{Cowling-Htype,CalinChangMarkina}]
We say that $J^2$ condition holds if for all $Z,Z'\in \Gamma(\ver)$, $X \in \Gamma(\hor)$ with $\langle Z,Z'\rangle =0$ there exists $Z''\in \Gamma(\ver)$ such that
\begin{equation}
J_Z J_{Z'}X = J_{Z''}X.
\end{equation}
\end{definition}
We stress that $Z''$ depends on $Z,Z'$ but may also depend on $X$. This condition is true in particular if the vector space generated by $\mathbbold{1}_\hor$ and the $J_Z$, for $Z \in \Gamma(\ver)$ is a subalgebra.

We then recall the following definitions from \cite{BGMR-Htypefoliations}.

\begin{definition}(\cite{BGMR-Htypefoliations}) \label{def:HType}
A totally geodesic foliation $(\M,\hor,g)$ for which the H-type condition holds is called an H-type foliation. Moreover:
\begin{enumerate}[\rm (i)]
\item If the torsion of the Bott connection is horizontally parallel, i.e.\ $\nabla_\hor T=0$, then we say that $(\M,\hor,g)$ is an H-type foliation with horizontally parallel torsion.
\item If the torsion of the Bott  connection is completely parallel, i.e.\ $\nabla T=0$, then we say that $(\M,\hor,g)$ is an H-type foliation with parallel torsion.
\item Let $(\M,\hor,g)$ be an H-type foliation with horizontally parallel torsion. We say that  $(\M,\hor, g)$  is an H-type foliation with a parallel horizontal Clifford structure if there exists a constant
$\kappa \in \mathbb{R}$ such that for every $Z_1,Z_2 \in \Gamma{\ver}$
\begin{align} \label{CliffordKappa}
(\nabla_{Z_1} J)_{Z_2}=-\kappa \left( J_{Z_1}J_{Z_2} +\langle Z_1,Z_2 \rangle_\V  \mathbbold{1}_\hor \right).
\end{align}
\end{enumerate} 

\end{definition}

Several examples of manifolds satisfying the previous definitions are given in \cite{BGMR-Htypefoliations}. Since H-type foliations with a parallel horizontal Clifford structure and satisfying $J^2$ condition will play an important role in the next section, we point out some examples that satisfy these assumptions in Table \ref{Table 1} and refer to \cite{BGMR-Htypefoliations} for further details on such spaces.

\begin{table}[t]
\centering
\scalebox{0.8}{
\begin{tabular}{| l | l | l |}
\hline \textbf{Complex Type} \\ \hline
Sasakian manifolds  \\ \hline
\hline \textbf{Quaternionic Type} \\ \hline
3-Sasakian manifolds \\ \hline
Negative 3-Sasakian  manifolds  \\ \hline
Torus bundle over hyperk\"ahler manifolds  \\ \hline
\hline \textbf{Octonionic Type} \\ \hline
Octonionic Heisenberg Group   \\ \hline
Octonionic Hopf Fibration  \(\mathbb{S}^7 \hookrightarrow \mathbb{S}^{15} \to \mathbb{O}P^1\) \\ \hline
Octonionic Anti de-Sitter Fibration  \(\mathbb{S}^7 \hookrightarrow \mathbf{AdS}^{15}(\mathbb{O}) \to \mathbb{O}H^1\)  \\ \hline
\end{tabular}}
\caption{Some examples of H-type foliations with parallel horizontal Clifford structure and satisfying the $J^2$ condition.}\label{Table 1}
\end{table}

\subsection{The sub-Riemannian limit}

Let $(\M,g)$ be a Riemannian manifold equipped with an orthogonal splitting $T\M = \hor \oplus \ver$, and assume that $\hor$ is bracket-generating. Let $\{g_\ve\}_{\ve >0}$ be the canonical variation \eqref{cv}. The Riemannian distance associated with $g_{\varepsilon}$ will be denoted by $d_\varepsilon$, while the sub-Rieman\-nian one, which depends only on the restriction $g_\hor$ of $g$ to $\hor$, is $d_0$. In this section we discuss how $d_\varepsilon$ approximates $d_0$ as $\varepsilon \to 0$ in full generality.

We always assume that $(\M,d_1)$ is complete, and so the same necessarily holds for $(\M,d_\ve)$ for all $\varepsilon \geq 0$. The cut locus $\Cut_\ve (x)$  of  $x\in \M$, $\ve \geq 0$ for the distance $d_\ve$ is defined as the complement of the set of points $y\in\M$ such that there exists a unique length-minimizing normal geodesic joining $x$ and $y$, and its endpoints are not conjugate (see \cite{A}). The global cut locus of $\M$ is defined by
\begin{equation}
\Cut_\ve (\M)=\left\{ (x,y) \in \M \times \M,\  y \in \Cut_\ve (x) \right\}.
\end{equation}

\begin{lemma}[\cite{A}, \cite{RT}]\label{cutlocus}
Let $\ve \ge 0$. The following statements hold:
\begin{enumerate}[\rm 1.]
\item The set $\M \setminus \Cut_\ve (x_0)$ is open and dense in $\M$.
\item The function $(x,y) \mapsto d_\ve  (x,y)^2$ is smooth on $\M \times \M \setminus \Cut_\ve (\M)$.
\end{enumerate}
\end{lemma}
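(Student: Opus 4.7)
The plan is to treat the Riemannian ($\ve > 0$) and sub-Riemannian ($\ve = 0$) cases uniformly via the Hamiltonian flow on $T^*\M$: for each $\ve \geq 0$, normal geodesics of $(\M, g_\ve)$ are projections to $\M$ of integral curves of the smooth Hamiltonian $H_\ve(x, \lambda) = \tfrac{1}{2}\|\lambda\|^2_{g_\ve^*}$, and this gives a smooth exponential map $\mathrm{exp}_x^\ve : T^*_x\M \to \M$. The statement is then the combination of classical Riemannian cut-locus theory (which is well-known for $\ve > 0$) with its sub-Riemannian analogue due to Agrachev \cite{A} and Rifford--Trélat \cite{RT}; I would derive both items from a common pair of ingredients: a local diffeomorphism property of $\mathrm{exp}_x^\ve$ at covectors of minimizers without conjugate points, and an openness/density statement for strictly normal minimizers.

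For openness of $\M \setminus \Cut_\ve(x_0)$, fix $y_0$ in this set, and let $\lambda_0 \in T^*_{x_0}\M$ be the initial covector of the unique minimizing normal geodesic joining $x_0$ to $y_0$. Since $y_0$ is not conjugate to $x_0$ along this geodesic, $\mathrm{exp}_{x_0}^\ve$ is a local diffeomorphism at $\lambda_0$, producing a smooth local inverse $y \mapsto \lambda(y)$ on a neighborhood $U$ of $y_0$. To propagate uniqueness, I would argue by contradiction: a sequence $y_n \to y_0$ admitting a second minimizer with initial covector $\mu_n$ in the bounded co-ball of radius $d_\ve(x_0, y_n) + 1$ would, by compactness, converge (along a subsequence) to a second minimizing covector $\mu_\infty \neq \lambda_0$ at $y_0$, contradicting the uniqueness hypothesis.

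For density, in the Riemannian case the cut locus has zero $n$-dimensional Hausdorff measure by standard results, and is in particular nowhere dense. In the sub-Riemannian case $\ve = 0$ I would invoke Agrachev's theorem \cite{A}, which asserts precisely that the set of points reachable from $x_0$ by a unique strictly normal non-conjugate minimizer is open and dense in $\M$. For the smoothness assertion, the smooth local inverse of the exponential map yields a smooth assignment $(x,y) \mapsto \lambda(x,y) \in T^*_x\M$ on a neighborhood of any point of $\M \times \M \setminus \Cut_\ve(\M)$, from which one obtains
\begin{equation}
d_\ve(x,y)^2 = 2 H_\ve\bigl(x, \lambda(x,y)\bigr),
\end{equation}
and smoothness follows from the smoothness of $H_\ve$ jointly in $(x, \lambda)$ and in $\ve > 0$ (with $\ve = 0$ requiring only the sub-Riemannian Hamiltonian).

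The main obstacle is the case $\ve = 0$: the exponential map lives on $T^*\M$ rather than $T\M$, and abnormal minimizers may in principle obstruct the local diffeomorphism argument. Agrachev's theorem is exactly what resolves this, ensuring that outside of $\Cut_0(x_0)$ all minimizers are strictly normal and non-conjugate, so the same scheme used for $\ve > 0$ applies. Everything else (the compactness argument for uniqueness, and the formula for the squared distance) is insensitive to the passage from Riemannian to sub-Riemannian, so no separate treatment of the two regimes is needed.
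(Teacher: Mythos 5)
The paper does not prove Lemma \ref{cutlocus}; it is stated as a citation to the references \cite{A} and \cite{RT}, so there is no proof to compare against directly. You have sketched a proof of the cited result, and the high-level scheme (Hamiltonian flow on $T^*\M$, local inverse of the exponential map, formula $d_\ve^2 = 2H_\ve$ precomposed with that inverse) is the correct one; it is also the scheme the paper uses later, in the proof of Proposition \ref{p:cutapprox}.

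However, your openness argument has a genuine gap in the sub-Riemannian case $\ve = 0$. You extract a convergent subsequence of second initial covectors $\mu_n$ by appealing to their being ``in the bounded co-ball of radius $d_\ve(x_0, y_n) + 1$.'' For $\ve > 0$ this is fine because $\{\lambda : 2H_\ve(\lambda) \le C\}$ is a compact ellipsoid. But for $\ve = 0$ the sub-Riemannian Hamiltonian $H_0$ is degenerate: the sublevel set $\{\lambda \in T^*_{x_0}\M : 2 H_0(\lambda) \le C\}$ is an unbounded cylinder (unbounded along $\ker H_0 = \hor^\perp$), so the sequence $\mu_n$ has no a priori reason to admit a convergent subsequence. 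One cannot transport the Riemannian compactness argument at the level of covectors. The standard repair, and the one used in the paper's own appendix (see Lemmas \ref{l:convergence-of-distance} and \ref{l:convergence-of-geodesics}), is to argue at the level of controls: minimizing controls $u_n$ lie in a bounded subset of $L^2$ and hence converge weakly (after extraction), the associated trajectories converge uniformly, and only afterwards does one recover convergence of the covectors via the Lagrange multiplier rule — a step which is itself non-trivial and requires the limiting minimizer to be non-singular (i.e.\ strictly normal). Alternatively one can invoke local semi-concavity of $d_0^2$ away from the diagonal (valid in the ideal case) to a priori bound the covectors. Either way, simply asserting compactness of a co-ball in $T^*_{x_0}\M$ is incorrect when $\ve = 0$, and this is precisely where the sub-Riemannian case diverges from the Riemannian one; your closing claim that ``everything else \dots is insensitive to the passage from Riemannian to sub-Riemannian'' understates the difficulty. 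A secondary, more cosmetic point: the subsequential limit $\mu_\infty$ need not be distinct from $\lambda_0$; if $\mu_\infty = \lambda_0$, you get the contradiction not from the uniqueness hypothesis but from the local diffeomorphism property of $\exp^\ve_{x_0}$ near $\lambda_0$ (two nearby preimages of $y_n$), so the case split should be made explicit.
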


The main result of this section is the following proposition, establishing the $C^\infty$-convergence of $d_\ve$ to $d_0$ outside of the cut locus. We will use this result to obtain the sub-Riemannian limit of the uniform horizontal Laplacian comparison theorems. The proof is included in Appendix \ref{b:app} since it is rather long and the techniques we make use of will not be reused in the sequel of this paper.

\begin{proposition}\label{p:cutapprox}
Let $x,y \in \M$ with $y \notin \Cut_0(x)$. Then there exists an open neighbourhood $V$ of $y$ and $\varepsilon'>0$ such that $V \cap \Cut_\ve(x) = \emptyset$ for all $0\leq \ve < \ve'$. Furthermore, the map
\begin{equation}
(\ve,z) \mapsto r_\ve(z)=d_\ve(x,z)
\end{equation}
is smooth for $(\ve,z)\in [0, \ve')\times V$. In particular, we have uniform convergence $r_\ve \to r_0$ together with their derivatives of arbitrary order on compact subsets of $\M \setminus \Cut_0(x)$.
\end{proposition}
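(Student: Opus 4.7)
\textbf{Proof plan for Proposition \ref{p:cutapprox}.}

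The natural framework is the Hamiltonian one. Identify $T^*\M$ with $T\M$ via $g=g_1$ and consider, for each $\varepsilon\ge 0$, the smooth Hamiltonian
\[
H_\varepsilon(p)=\tfrac{1}{2}\|p_\hor\|^2+\tfrac{\varepsilon}{2}\|p_\ver\|^2, \qquad p\in T^*\M,
\]
which is the geodesic Hamiltonian of $g_\varepsilon$ for $\varepsilon>0$ and the sub-Riemannian one for $\varepsilon=0$. The associated Hamiltonian vector field depends smoothly on $\varepsilon\in[0,\infty)$, and the normal exponential map
\[
E(\varepsilon,p):=\exp_{x,\varepsilon}(p)=\pi\circ e^{\vec H_\varepsilon}(p),\qquad p\in T_x^*\M,
\]
is therefore smooth jointly in $(\varepsilon,p)$ on its domain of definition.

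First I would construct a smooth family of candidate minimizers. Since $y\notin\Cut_0(x)$, there is a unique normal sub-Riemannian minimizer $\gamma_0:[0,1]\to\M$ joining $x$ to $y$, with initial covector $p_0^*\in T_x^*\M$ at which $E(0,\cdot)$ is a local diffeomorphism (the non-conjugacy assumption). The implicit function theorem applied to $E(\varepsilon,p)-z$ gives $\varepsilon_1>0$, an open neighbourhood $U\ni p_0^*$ and an open neighbourhood $W\ni y$, and a smooth map $(\varepsilon,z)\mapsto p_\varepsilon^*(z)\in U$ on $[0,\varepsilon_1)\times W$ with $E(\varepsilon,p_\varepsilon^*(z))=z$. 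The associated trajectory $\gamma_{\varepsilon,z}$ is a normal $g_\varepsilon$-geodesic from $x$ to $z$ of length $\sqrt{2H_\varepsilon(p_\varepsilon^*(z))}$, which depends smoothly on $(\varepsilon,z)$ and tends to $d_0(x,y)$ as $(\varepsilon,z)\to(0,y)$.

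The main obstacle is to show that, after possibly shrinking $\varepsilon_1$ and $W$, the candidate $\gamma_{\varepsilon,z}$ is actually a $d_\varepsilon$-minimizer and that $z\notin \Cut_\varepsilon(x)$. I would argue by contradiction and compactness. Suppose there are sequences $\varepsilon_n\downarrow 0$, $z_n\to y$, and a $d_{\varepsilon_n}$-minimizing geodesic $\tilde\gamma_n$ from $x$ to $z_n$ which is either different from $\gamma_{\varepsilon_n,z_n}$ or realizes a conjugacy. By construction
\[
L_{\varepsilon_n}(\tilde\gamma_n)=d_{\varepsilon_n}(x,z_n)\le L_{\varepsilon_n}(\gamma_{\varepsilon_n,z_n})\longrightarrow d_0(x,y),
\]
so the $\tilde\gamma_n$, parametrized on $[0,1]$ proportionally to arc length, have uniformly bounded $g_1$-length (since $g_1\le \varepsilon_n^{-1}g_\ver\oplus g_\hor =g_{\varepsilon_n}$ once $\varepsilon_n\le 1$) and are therefore equi-Lipschitz with respect to $d_1$. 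By Arzelà–Ascoli, up to a subsequence they converge uniformly to a curve $\tilde\gamma_\infty:[0,1]\to\M$ joining $x$ to $y$, whose vertical velocity vanishes almost everywhere (because the $g_\ver$-norm of $\dot{\tilde\gamma}_n^{\ver}$ is $O(\sqrt{\varepsilon_n})$), hence horizontal, and of $d_0$-length at most $d_0(x,y)$. It is therefore a sub-Riemannian minimizer, and by uniqueness coincides with $\gamma_0$. Lifting $\tilde\gamma_n$ to the cotangent bundle through the Pontryagin maximum principle, their initial covectors $\tilde p_n$ satisfy $H_{\varepsilon_n}(\tilde p_n)\to H_0(p_0^*)$ and their Hamiltonian trajectories converge to that of $p_0^*$; a standard normalization together with the local injectivity of $E(\varepsilon,\cdot)$ near $p_0^*$ (which persists in $\varepsilon$ by smoothness of $E$ and non-conjugacy) forces $\tilde p_n\in U$ for large $n$, hence $\tilde p_n=p_{\varepsilon_n}^*(z_n)$ and $\tilde\gamma_n=\gamma_{\varepsilon_n,z_n}$. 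The same persistence of non-conjugacy rules out the conjugate case. This contradiction proves $V\cap\Cut_\varepsilon(x)=\emptyset$ for some neighbourhood $V\subset W$ of $y$ and some $\varepsilon'\in(0,\varepsilon_1)$.

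Finally, on $[0,\varepsilon')\times V$ we have $r_\varepsilon(z)^2=2H_\varepsilon(p_\varepsilon^*(z))$ by the general relation between the squared distance from $x$ outside the cut locus and the Hamiltonian of the initial covector of the unique minimizer. Since the right-hand side is smooth in $(\varepsilon,z)$ and $r_\varepsilon(z)>0$ there, smoothness of $(\varepsilon,z)\mapsto r_\varepsilon(z)$ follows. Uniform convergence $r_\varepsilon\to r_0$ together with all derivatives on compact subsets of $\M\setminus\Cut_0(x)$ is then a direct consequence of joint smoothness and a standard covering argument. The delicate point is the compactness/uniqueness step above; once this is established, everything else reduces to smooth dependence of the Hamiltonian flow.
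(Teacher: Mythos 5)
Your outline follows essentially the same route as the paper (implicit function theorem applied to the joint exponential map $(\varepsilon,\lambda)\mapsto\exp_x^\varepsilon(\lambda)$, followed by a compactness/uniqueness argument to rule out competing minimizers, then smoothness of $r_\varepsilon^2$ from the Hamiltonian formula). However, there is a genuine gap exactly at the step where you claim that the initial covectors $\tilde p_n$ of the competing minimizers end up in the neighbourhood $U$ of $p_0^*$.

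You write that ``their Hamiltonian trajectories converge to that of $p_0^*$'' and that ``a standard normalization together with the local injectivity of $E(\varepsilon,\cdot)$ near $p_0^*$ forces $\tilde p_n\in U$.'' Neither of these is automatic. The only a priori bound on $\tilde p_n$ comes from the Hamiltonian constraint $2H_{\varepsilon_n}(\tilde p_n)=L_{\varepsilon_n}(\tilde\gamma_n)^2$, which controls $\|(\tilde p_n)_\hor\|$ and $\sqrt{\varepsilon_n}\,\|(\tilde p_n)_\ver\|$ but leaves $\|(\tilde p_n)_\ver\|$ free to blow up at rate $\varepsilon_n^{-1/2}$. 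Without a compactness bound on the covectors, one cannot extract a convergent subsequence in $T_x^*\M$, and local injectivity of $E(\varepsilon,\cdot)$ in a neighbourhood of $p_0^*$ is of no use if the $\tilde p_n$ need not lie anywhere near $p_0^*$. Uniform convergence of the base trajectories to $\gamma_0$ does not imply convergence of the cotangent lifts, since the Hamiltonian flows $e^{t\vec H_\varepsilon}$ are not proper uniformly in $\varepsilon$.

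The paper closes this gap with a dedicated lemma (Lemma \ref{l:convergence-of-geodesics}). The key idea, which your proposal omits, is that since $y\notin\Cut_0(x)$ the limiting sub-Riemannian geodesic $\gamma_0$ is a \emph{non-singular} normal extremal, so its control $\bar u$ is a regular point of the sub-Riemannian endpoint map $\End_x$. One then shows that the $L^2$ controls $u_{\varepsilon_n}$ of the $\tilde\gamma_n$ converge strongly in $L^2$ to $\bar u$ (this is established in Lemma \ref{l:convergence-of-distance}: weak compactness plus norm convergence), and uses the Lagrange multiplier characterization $\langle\eta_{\varepsilon_n},D_{u_{\varepsilon_n}}\End_x(v)\rangle=(u_{\varepsilon_n},v)_{L^2}$ restricted to horizontal variations $v$, together with the surjectivity of $D_{\bar u}\End_x$ on horizontal controls, to solve for the final covectors $\eta_{\varepsilon_n}$ by a formula that is continuous in the control. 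This is what forces $\eta_{\varepsilon_n}\to\bar\eta$ and hence $\tilde p_n\to p_0^*$. Your proof needs an argument of this type; as written, the covector-convergence step is asserted, not proved.
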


\begin{remark}\label{rmk:convergence}
For $x\in \M$, let $r_\ve(\cdot) := d_\ve(x,\cdot)$, and let $y\notin \Cut_0(x)$. The $C^\infty$ convergence established in Proposition \ref{p:cutapprox} implies
\begin{equation}
\lim_{\ve \to 0} \nabla_\hor r_\ve = \nabla_\hor r_0, \qquad \lim_{\ve \to 0} \nabla_\ver r_\ve = \nabla_\ver r_0,
\end{equation}
outside of $\Cut_0(x)$, where $\nabla_\hor$ and $\nabla_\ver$ denote the projections on $\hor$ and $\ver$ of the Riemannian gradient $\nabla^g$. Furthermore, since $\nabla^{g_\ve} = \nabla^g_\hor + \ve \nabla^g_\ver$, we have that for all $\ve>0$ and outside of $\Cut_0(x)$ it holds
\begin{equation}
1 = \|\nabla^{g_\ve} r_\ve\|^2_\ve = \|\nabla_\hor r_\ve \|^2 + \ve \|\nabla_\ver r_\ve\|^2,
\end{equation}
and thus $\displaystyle\lim_{\ve \to 0} \|\nabla_\hor r_\ve \| =1$.
\end{remark}

\subsubsection{H-type structures are ideal}

The H-type condition implies that the horizontal distribution $\hor$ is \emph{fat} (also said \emph{strong bracket-generating}), that is for any $x\in \M$ and section $X\in \Gamma(\hor)$ with $X(x)\neq 0$ it holds $T_x \M = \hor_x \oplus [X,\hor]_x$. It is well known that sub-Riemannian structures with a fat distribution are \emph{ideal}, that is do not admit non-trivial singular minimizing geodesics, see e.g.\ \cite{Rifford,Montgomery}. A well-known consequence of this fact is that the squared sub-Riemannian distance is locally semi-concave outside of the diagonal, see \cite{CR-Semiconcavity}. In particular, the sub-Riemannian squared distance from $x$ is locally Lipschitz in charts on $\M\setminus \{x\}$. It follows by standard arguments (see e.g.\ \cite{Rifford}) that in this case $\Cut_0(x)$ has zero measure for all $x\in \M$. We state this result as a proposition.

\begin{proposition}
Any complete H-type sub-Riemannian structure is fat, and in particular the set $\Cut_\ve(x)$ has zero measure for all $x \in \M$ and $\ve \geq 0$.
\end{proposition}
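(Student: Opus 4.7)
The plan is to split the proposition into two largely independent claims: (i) fatness of $\hor$, which follows from the H-type identity by a short duality argument, and (ii) the zero-measure statement for $\Cut_\ve(x)$, which reduces to standard results already alluded to in the paragraph preceding the statement.

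For (i), I would fix $x \in \M$ and a nonzero $X \in \hor_x$, and study the linear map
\begin{equation}
A_X : \hor_x \longrightarrow \ver_x, \qquad A_X(Y) = -T(X,Y).
\end{equation}
Since $T$ is a tensor this is well defined on vectors, and the torsion formula $T(X,Y) = -\pi_\ver([\tilde X,\tilde Y])$ for horizontal extensions shows that surjectivity of $A_X$ is equivalent to $\pi_\ver([X,\hor]_x) = \ver_x$, i.e.\ to the fat condition $T_x\M = \hor_x + [X,\hor]_x$. From the defining identity $\langle J_Z X, Y\rangle = \langle Z, T(X,Y)\rangle$, the adjoint of $A_X$ with respect to the splitting inner products is $A_X^* Z = -J_Z X$. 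The H-type identity $J_Z^2 = -\|Z\|^2 \mathbbold{1}_\hor$ forces $A_X^*$ to be injective: if $J_Z X = 0$ then $J_Z^2 X = -\|Z\|^2 X = 0$, hence $Z=0$ since $X \neq 0$. Thus $A_X$ is surjective and $\hor$ is fat at $x$.

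For (ii), once fatness is established the standard chain of implications applies. When $\ve = 0$: fatness implies that the sub-Riemannian structure is ideal \cite{Rifford,Montgomery}, hence $r_0^2 = d_0(x,\cdot)^2$ is locally semiconcave on $\M\setminus\{x\}$ by \cite{CR-Semiconcavity}; local semiconcavity provides local Lipschitz continuity in charts, and a classical argument (see \cite{Rifford}) then yields that $\Cut_0(x)$ has zero Lebesgue measure. When $\ve > 0$, $(\M,g_\ve)$ is Riemannian and the zero-measure property of the Riemannian cut locus is classical. The only genuinely new input is the linear-algebra step (i); no serious obstacle is anticipated, the key point being that the H-type identity, read through the $J$--$T$ duality, is exactly what makes $Z \mapsto J_Z X$ injective on $\ver_x$ for every nonzero horizontal $X$.
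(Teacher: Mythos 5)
Your proof is correct and follows the same chain of implications as the paper (fat $\Rightarrow$ ideal $\Rightarrow$ locally semi-concave squared distance $\Rightarrow$ zero-measure cut locus, via the same references), and for $\ve>0$ the Riemannian statement is classical. The only difference is that the paper simply asserts that the H-type condition implies fatness, whereas you supply the short duality argument showing $Z \mapsto J_Z X$ is injective for $X\neq 0$, which is the natural way to fill in that gap.
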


We remark that, for all $\ve >0$, $\Cut_\ve(x)$ can be characterized as the set of points where $d_\ve(x,\cdot)$ fails to be locally semi-convex \cite{CEMS10}, and the same characterization holds for $\Cut_0(x)$, provided that the sub-Riemannian structure is ideal \cite{BR-Interpolation}. This is the case, as we already remarked, for H-type structures.

\subsection{The comparison principle}

Let $\nabla$ be the Hladky connection, which is $g_\varepsilon$-metric for all $\varepsilon>0$. In Appendix \ref{a:app} we show how one can build an associated metric connection, with metric adjoint (see Lemma \ref{l:construction}). Following the notation of \cite{BGKT17}, we denote such a connection by
\begin{equation}\label{eq:hatnablaeps}
\hat{\nabla}^\varepsilon_X Y  = \nabla_X Y + J^\varepsilon_X Y, \qquad \forall X,Y\in\Gamma(T\M),
\end{equation}
where $J$ and $J^\ve$ are defined as in \eqref{Jmap} relative to $g$ and $g_\ve$ respectively. Its adjoint connection (cf.\ Appendix), which is also $g_\ve$-metric, is given by
\begin{equation}
\nabla^\ve_X Y = \nabla_X Y - T(X,Y) + J_Y^\ve X, \qquad \forall X,Y\in\Gamma(T\M).
\end{equation}
Let $\hat{R}^\ve$ be the curvature of $\hat{\nabla}^\varepsilon$. The Jacobi equation for a vector field $W$ along a $g_\varepsilon$-geodesic $\gamma$ reads
\begin{equation}
\hat\nabla^\varepsilon_{\dot\gamma} \nabla^\varepsilon_{\dot\gamma}W+ \hat{R}^{\varepsilon}(W,\dot\gamma)\dot\gamma = 0.
\end{equation}
The following comparison principle will be repeatedly used in the following. It is proved in Appendix \ref{a:app}, in a slightly more general context (take $D =\hat{\nabla}^\varepsilon$ there, so that $\hat{D} = \nabla^\ve$).

\begin{theorem}[Comparison principle]\label{t:comparison}
Let $(\M,g)$ be a Riemannian manifold, together with a vector bundle orthogonal splitting $T\M = \hor \oplus \ver$. Fix $\varepsilon>0$ and let $g_\ve$ be the corresponding one-parameter family of Riemannian metrics as in \eqref{cv}. Choose $x \in \M$ and $y \notin \Cut_\varepsilon(x)$. Let $\gamma : [0,r_\varepsilon] \to \M$ be the unique $g_\varepsilon$-geodesic, parametrized with unit speed, joining $x$ with $y$. For $\ell \in \mathbb{N}$, let $W_1,\dots,W_\ell$ be $\ell$-tuple of vector fields along $\gamma$ and $g_\varepsilon$-orthogonal to $\dot\gamma$ such that
\begin{equation}\label{eq:comparison-condition}
\sum_{i=1}^\ell \int_0^r \langle\hat\nabla^\varepsilon_{\dot\gamma} \nabla^\varepsilon_{\dot\gamma}W_i+ \hat{R}^{\varepsilon}(W_i,\dot\gamma)\dot\gamma,W_i\rangle_\varepsilon \, dt \geq 0.
\end{equation}
Then, at $y=\gamma(r_\varepsilon)$, it holds
\begin{equation}\label{eq:pretraceI}
\sum_{i=1}^\ell \mathrm{Hess}^{\hat{\nabla}^\varepsilon}(r_\varepsilon)(W_i,W_i) \leq \sum_{i=1}^\ell \langle W_i(r_\varepsilon),\hat{\nabla}^\varepsilon_{\dot\gamma}W_i(r_\varepsilon)\rangle_\ve,
\end{equation}
and the equality holds if and only if $W_1,\dots,W_\ell$ are Jacobi fields for the metric $g_\ve$.
\end{theorem}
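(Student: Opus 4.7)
The plan is to derive the stated inequality from the elementary fact that the $g_\varepsilon$-distance from $x$ is majorized by the $g_\varepsilon$-length of any curve ending at a given point, combined with the second variation formula for length expressed in terms of the adjoint pair $(\hat{\nabla}^\varepsilon,\nabla^\varepsilon)$ of $g_\varepsilon$-metric connections.

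For each $i$ I would fix a smooth variation $\alpha_i\colon(-\delta,\delta)\times[0,r_\varepsilon]\to\M$ of $\gamma$ with $\alpha_i(0,t)=\gamma(t)$, fixed initial point $\alpha_i(s,0)=x$, and variational field $\partial_s\alpha_i(0,t)=W_i(t)$; this implicitly requires $W_i(0)=0$, which is consistent with the absence of a boundary contribution at $t=0$ in the conclusion. Setting $c_i(s):=\alpha_i(s,r_\varepsilon)$ and $L_i(s):=L_\varepsilon(\alpha_i(s,\cdot))$, the minimizing property of $\gamma$ combined with $y\notin\Cut_\varepsilon(x)$ and Lemma~\ref{cutlocus} gives $r_\varepsilon(c_i(s))\leq L_i(s)$ with equality at $s=0$, whence $(r_\varepsilon\circ c_i)''(0)\leq L_i''(0)$. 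Since $\nabla^{g_\varepsilon}r_\varepsilon|_y=\dot\gamma(r_\varepsilon)$ by the Gauss lemma, the defining identity for the Hessian yields
\begin{equation*}
\frac{d^2}{ds^2}\bigg|_{s=0}r_\varepsilon(c_i(s))=\mathrm{Hess}^{\hat{\nabla}^\varepsilon}(r_\varepsilon)(W_i,W_i)\big|_y+\langle\dot\gamma(r_\varepsilon),\hat{\nabla}^\varepsilon_{\dot c_i}\dot c_i|_{s=0}\rangle_\varepsilon.
\end{equation*}

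The core technical step is to compute $L_i''(0)$ using the mutually adjoint $g_\varepsilon$-metric pair $(\hat{\nabla}^\varepsilon,\nabla^\varepsilon)$. Generalizing Synge's second variation formula (and carried out in Appendix~\ref{a:app} in greater generality) one arrives at
\begin{equation*}
L_i''(0)=-\int_0^{r_\varepsilon}\langle\hat{\nabla}^\varepsilon_{\dot\gamma}\nabla^\varepsilon_{\dot\gamma}W_i+\hat{R}^\varepsilon(W_i,\dot\gamma)\dot\gamma,W_i\rangle_\varepsilon\,dt+\langle W_i(r_\varepsilon),\hat{\nabla}^\varepsilon_{\dot\gamma}W_i(r_\varepsilon)\rangle_\varepsilon+\langle\dot\gamma(r_\varepsilon),\hat{\nabla}^\varepsilon_{\dot c_i}\dot c_i|_{s=0}\rangle_\varepsilon,
\end{equation*}
with no boundary contribution at $t=0$ since $W_i(0)=0$. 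The adjointness of $\hat{\nabla}^\varepsilon$ and $\nabla^\varepsilon$ is crucial here, trading $\nabla^\varepsilon$-derivatives for $\hat{\nabla}^\varepsilon$-derivatives in the integration by parts that collapses the boundary term into the compact form $\langle W_i,\hat{\nabla}^\varepsilon_{\dot\gamma}W_i\rangle_\varepsilon$. Summing over $i$, combining with the previous paragraph, cancelling the matching acceleration terms $\langle\dot\gamma,\hat{\nabla}^\varepsilon_{\dot c_i}\dot c_i\rangle_\varepsilon$, and invoking \eqref{eq:comparison-condition} to drop the non-positive integral contribution then yields the stated inequality.

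Equality in the final estimate forces both $(r_\varepsilon\circ c_i)''(0)=L_i''(0)$ and \eqref{eq:comparison-condition} to hold as equalities for every $i$; the former condition is classically equivalent to each $W_i$ being a $g_\varepsilon$-Jacobi field along $\gamma$, i.e.\ satisfying $\hat{\nabla}^\varepsilon_{\dot\gamma}\nabla^\varepsilon_{\dot\gamma}W_i+\hat{R}^\varepsilon(W_i,\dot\gamma)\dot\gamma=0$, which in turn makes \eqref{eq:comparison-condition} an equality. The main technical obstacle is the second variation formula above: both $\hat{\nabla}^\varepsilon$ and $\nabla^\varepsilon$ carry torsion, so the standard Levi-Civita derivation does not apply directly, and the torsion-induced contributions coming from the commutator $[\partial_s,\partial_t]\alpha_i$ and its iterated covariant derivatives must be tracked carefully so that they recombine into the clean expression appearing in the statement.
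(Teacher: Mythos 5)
Your argument is correct, and while it is phrased somewhat differently from the paper's (which is deferred to Appendix~\ref{a:app}, Theorem~\ref{t:index}), it is essentially the same underlying computation. The paper first shows $\mathrm{Hess}^{D}(r)(X,X)=I(V,V)$ by differentiating the length functional along a variation of $\gamma$ through \emph{geodesics}, then invokes the classical index lemma $I(V,V)\le I(W,W)$, and finally integrates $I(W,W)$ by parts to exhibit the Jacobi operator. Your route bypasses the black-box citation of the index lemma by observing that $r_\varepsilon(c_i(s))\le L_i(s)$ with equality at $s=0$, then computes both second derivatives directly; but the content — the torsion-aware second variation of length in terms of the mutually adjoint pair $(\hat\nabla^\varepsilon,\nabla^\varepsilon)$ — is exactly what the appendix computes, so the proofs are materially the same.

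Two remarks on presentation. First, you correctly identified the implicit boundary condition $W_i(0)=0$ (the appendix statement of Theorem~\ref{t:index} requires it, with a small typo writing $V(0)=0$ rather than $W(0)=0$), which does need to be made explicit. Second, a point worth flagging in your integration by parts: the two a priori different Jacobi operators $\hat\nabla^\varepsilon_{\dot\gamma}\nabla^\varepsilon_{\dot\gamma}$ and $\nabla^\varepsilon_{\dot\gamma}\hat\nabla^\varepsilon_{\dot\gamma}$ differ by $(\hat\nabla^\varepsilon_{\dot\gamma}\hat{T}^\varepsilon)(\dot\gamma,\cdot)$, and one must use the complete skew-symmetry of the torsion of $\hat\nabla^\varepsilon$ (Lemma~\ref{l:constructionskew}), preserved under covariant differentiation, to see that this discrepancy is $g_\varepsilon$-orthogonal to $W_i$ and hence harmless; similarly this skew-symmetry is what makes $\langle W_i,\hat\nabla^\varepsilon_{\dot\gamma}W_i\rangle_\varepsilon=\langle W_i,\nabla^\varepsilon_{\dot\gamma}W_i\rangle_\varepsilon$, reconciling the two possible boundary terms. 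You gesture at tracking the torsion carefully but do not spell out this reconciliation, which is the one genuinely delicate bookkeeping step; the rest of the proposal is sound.
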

\begin{remark}\label{rmk:hessian}
If $(\M,\hor,g)$ is a totally geodesic foliation, $W\in \Gamma(\hor)$ and $u$ is sufficiently regular, then one has
\begin{multline}
\mathrm{Hess}^{\hat{\nabla}^\varepsilon}(u)(W,W) =\langle \hat{\nabla}^\varepsilon_W \nabla^{g_\ve} u ,W\rangle_\ve = \langle (\nabla_W +J_W^\ve)\nabla^{g_\ve} u,W\rangle_\ve \\ 
=\langle \nabla_W \nabla u,W\rangle =: \mathrm{Hess}(u)(W,W).
\end{multline}
In particular, if the $W_i$ in Theorem \ref{t:comparison} are assumed to be horizontal at $y = \gamma(r_\ve)$, then the Hessian in \eqref{eq:pretraceI} can be computed equivalently with respect to $\nabla$.
\end{remark}

The following Lemma contains some useful facts that will be used in computations.
\begin{lemma}\label{l:someproperties}
Let $(\M,\hor,g)$ be a totally geodesic foliation. Then for all $\varepsilon >0$ it holds:
\begin{equation}\label{eq:nablaJ}
(\hat{\nabla}^\varepsilon_{X} J)_Y Z = (\nabla_{X} J)_Y Z + [J_{X}^\varepsilon,J_Y]Z, \qquad \forall X,Y,Z \in \Gamma(T\M).
\end{equation}
If the torsion is horizontally parallel, then
\begin{equation}
(\hat{\nabla}^\varepsilon_Z J)_W = -(\hat{\nabla}^\varepsilon_Z J)_W \quad \text{and} \quad (\nabla_Z J)_W = -(\nabla_Z J)_W, \qquad  \forall Z,W \in \Gamma(T\M).
\end{equation}
If the H-type condition holds, then for any $X,Y\in \Gamma(T\M)$, and $Z\in \Gamma(\ver)$, we have 
\begin{gather}
\langle J_ZY, (\nabla_X J)_Z Y \rangle  = \langle J_ZY, (\hat{\nabla}^\varepsilon_X J)_Z Y\rangle = 0 \label{eq:perpendicularity} \\
T(J_Z X,X) = -\|X_\hor\|^2 Z \label{eq:propT} \\
(\nabla_{Y} T)(J_Z X,X)  = - T((\nabla_{Y}J)_ZX,X). \label{eq:nablaTnablaJ} 
\end{gather}
\end{lemma}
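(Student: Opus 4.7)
The lemma contains five claims; I would prove them in the order stated, since the first one, identity \eqref{eq:nablaJ}, is the bridge used repeatedly to transfer statements from $\nabla$ to $\hat{\nabla}^\ve$.

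For \eqref{eq:nablaJ}, the plan is to expand both sides by writing $\hat{\nabla}^\ve = \nabla + J^\ve$ and treating $J$ as a $(2,1)$-tensor, which gives
\begin{equation*}
(\hat{\nabla}^\ve_X J)_Y Z = (\nabla_X J)_Y Z + [J^\ve_X, J_Y] Z - J_{J^\ve_X Y} Z.
\end{equation*}
The crucial point is that in a totally geodesic foliation $J^\ve_X Y$ always takes values in $\hor$ (this follows from a case split using the formulas for $T$ given earlier in the section and the remark after \eqref{Jmap}), and $J_\hor = 0$, so the spurious last term drops out. The antisymmetry identity that follows in the lemma as stated appears to contain a typographical repetition; I would interpret it as the natural antisymmetry in the two arguments of $(\nabla J)$, which follows from the contraction
\begin{equation*}
\langle (\nabla_V J)_Z X, Y\rangle = \langle Z, (\nabla_V T)(X,Y)\rangle,
\end{equation*}
converting horizontal parallelism of $T$ into horizontal parallelism of $J$, and then transferring to $\hat{\nabla}^\ve$ via \eqref{eq:nablaJ}.

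For \eqref{eq:perpendicularity}, I would extend $Y$ and $Z$ to sections $\nabla$-parallel along the integral curve of $X$ at the base point. Then at that point $\nabla_X(J_Z Y) = (\nabla_X J)_Z Y$, so $2\langle J_Z Y, (\nabla_X J)_Z Y\rangle = X(\|J_Z Y\|^2)$; since $Z$ is vertical and $J_Z Y = J_Z Y_\hor \in \hor$, the H-type identity gives $\|J_Z Y\|^2 = \|Z\|^2 \|Y_\hor\|^2$, and since $\nabla$ is metric and preserves the splitting both factors are constant along the parallel extension. To pass to $\hat{\nabla}^\ve$, \eqref{eq:nablaJ} introduces two extra brackets: $\langle J_Z Y, J^\ve_X J_Z Y\rangle$ vanishes by $g_\ve$-skew-adjointness of $J^\ve_X$ (restricted to $\hor$, where $g_\ve$ and $g$ coincide), and $\langle J_Z Y, J_Z J^\ve_X Y\rangle$ collapses, via skew-adjointness of $J_Z$ and $J_Z^2 = -\|Z\|^2\mathbbold{1}_\hor$, to $\|Z\|^2 \langle Y_\hor, J^\ve_X Y_\hor\rangle$, which is again zero by skew-adjointness.

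For \eqref{eq:propT}, pairing with an arbitrary $Z' \in \ver$ and using the vanishing of $T$ on mixed horizontal/vertical inputs reduces the computation to $A := \langle J_{Z'} J_Z X_\hor, X_\hor\rangle$. Two successive skew-adjoint swaps give $\langle J_Z J_{Z'} X_\hor, X_\hor\rangle = A$ as well, and the polarised H-type relation $J_Z J_{Z'} + J_{Z'} J_Z = -2\langle Z, Z'\rangle_\ver \mathbbold{1}_\hor$ then forces $2A = -2\langle Z, Z'\rangle \|X_\hor\|^2$, which gives \eqref{eq:propT}. Finally, for \eqref{eq:nablaTnablaJ}, I would extend $X, Y, Z$ to $\nabla$-parallel sections along an integral curve of $Y$ at the point; the tensorial expansion of $(\nabla_Y T)(J_Z X, X)$ then collapses to $\nabla_Y T(J_Z X, X) - T((\nabla_Y J)_Z X, X)$, and \eqref{eq:propT} shows that $T(J_Z X, X) = -\|X_\hor\|^2 Z$ is constant along the extension, killing the first summand. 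The only real subtlety is the identification and elimination of the extra $J_{J^\ve_X Y} Z$ term in the derivation of \eqref{eq:nablaJ}; beyond that, everything reduces to careful bookkeeping with skew-adjoint endomorphisms and the Clifford identity.
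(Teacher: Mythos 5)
Your proof is correct and takes essentially the same route as the paper's, which is quite terse: the paper proves \eqref{eq:nablaJ} ``from the definition and the properties of $J$ in the totally geodesic setting,'' cites an external reference for the second (typo-afflicted) identity and then invokes \eqref{eq:nablaJ} to pass to $\hat{\nabla}^\ve$, proves \eqref{eq:perpendicularity} by choosing $\nabla$- or $\hat{\nabla}^\ve$-parallel extensions of $Z$ and $Y$ using that both connections are metric and preserve the splitting, calls \eqref{eq:propT} trivial, and obtains \eqref{eq:nablaTnablaJ} by covariant differentiation of \eqref{eq:propT}. You fill in exactly the missing details. Two small remarks: (1) for \eqref{eq:perpendicularity} with $\hat{\nabla}^\ve$, you transfer from $\nabla$ through the bridge \eqref{eq:nablaJ} plus skew-adjointness, while the paper simply runs the parallel-extension argument a second time directly for $\hat{\nabla}^\ve$ — both are fine. (2) Your identification of, and correct handling of, the spurious term $-J_{J^\ve_X Y}Z$ in the Leibniz expansion (which dies because $J^\ve_X Y$ is always horizontal in the totally geodesic setting and $J_\hor=0$) is precisely the ``properties of $J$'' the paper leaves implicit. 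You are right that the displayed antisymmetry identity has a typo (as written, each side would be zero); the paper's own proof merely cites \cite[Lemma 2.6]{BGMR-Htypefoliations} for the $\nabla$-version, so your guess at the intended meaning and your plan to transfer it to $\hat\nabla^\ve$ via \eqref{eq:nablaJ} is consistent with the paper's intent, though neither you nor the paper spells out the $\nabla$-level proof.
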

\begin{proof}
The first identity follows from the definition $\hat{\nabla}^\varepsilon_X Y = \nabla_X Y + J^\varepsilon_X Y$, and the properties of $J$ in the totally geodesic setting. The second identity is proved for $\nabla$ in \cite[Lemma 2.6]{BGMR-Htypefoliations}. Then, thanks to the first identity it holds also for $\hat{\nabla}^\varepsilon$. To prove \eqref{eq:perpendicularity}, we stress that both $\nabla$ and $\hat{\nabla}^\varepsilon$ connections are metric and preserve the splitting $\hor\oplus\ver$. Therefore \eqref{eq:perpendicularity} can be easily proved at any point $p \in \M$ by assuming without loss of generality that $Z$ and $Y$ are parallel with respect to  $\nabla$ or $\hat{\nabla}^\varepsilon$. Identity \eqref{eq:propT} is trivial, and \eqref{eq:nablaTnablaJ} follows by taking the covariant derivative of \eqref{eq:propT}.
\end{proof}

In order to verify condition \eqref{eq:comparison-condition} of Theorem \ref{t:comparison} it is useful highlight the role of the curvature of the Hladky connection, writing explicitly the Jacobi operator. In the next lemma we do it for the case of H-type foliations with horizontally parallel torsion.

\begin{lemma}\label{l:Jacobiequation}
Let $(\M,\hor,g)$ be an H-type foliation with horizontally parallel torsion. Let $W$ be a vector field along a $g_\varepsilon$-geodesic $\gamma$ with $\ve>0$. Let $\hat R^\ve$ be the curvature of $\hat \nabla^\ve$ and define the Jacobi operator
\begin{equation} \label{JacobiOperator}
\mathcal{Z}(W):=\hat\nabla^\varepsilon_{\dot\gamma} \nabla^\varepsilon_{\dot\gamma}W+ \hat{R}^{\varepsilon}(W,\dot\gamma)\dot\gamma.
\end{equation}
\begin{enumerate}[\rm (a)]
\item Let $R$ be the curvature of the Hladky (Bott) connection $\nabla$ and define
\begin{equation}
R_\hor(X, Y) Z = R(X_\hor, Y_\hor) Z_\hor, \qquad R_\V(X, Y) Z = R(X_\ver, Y_\ver) Z_\ver.
\end{equation}
Then one has, for the Jacobi operator \eqref{JacobiOperator}, the formula
\begin{align}
\mathcal{Z}(W) & = \hat{\nabla}^\varepsilon_{\dot\gamma}\hat{\nabla}^\varepsilon_{\dot\gamma} W
 -J_{\dot\gamma}^\varepsilon \hat{\nabla}^\varepsilon_{\dot\gamma}W + J^\varepsilon_{\hat{\nabla}^\varepsilon_{\dot\gamma} W}\dot\gamma - (\nabla_{\dot\gamma} J^\varepsilon)_W\dot\gamma + J^\varepsilon_{T(W,\dot\gamma)}\dot\gamma +R_\hor(W,\dot\gamma)\dot\gamma \\
 & \qquad  +  \hat{\nabla}^\varepsilon_{\dot\gamma}(T(W,\dot\gamma)) + (\nabla_{\dot\gamma}T)(W,\dot\gamma) +R_\ver(W,\dot\gamma)\dot\gamma.
\end{align}
\item Assume that  $(\M,\hor,g)$ admits a parallel horizontal Clifford structure as in Definition~\ref{def:HType}~{\rm (iii)} with constant $\kappa$. Let $W_\perp$ be the $g_\ve$-orthogonal projection of $W$ on the orthogonal complement of $\dot\gamma$. Then
\begin{align}
\mathcal{Z}(W) & = \hat{\nabla}^\varepsilon_{\dot\gamma}\hat{\nabla}^\varepsilon_{\dot\gamma} W
 -J_{\dot\gamma}^\varepsilon \hat{\nabla}^\varepsilon_{\dot\gamma}W + J^\varepsilon_{\hat{\nabla}^\varepsilon_{\dot\gamma} W}\dot\gamma +\kappa J_{\dot\gamma} J_{(W_{\V})_\perp}^\varepsilon\dot\gamma
 + J^\varepsilon_{T(W,\dot\gamma)}\dot\gamma +R_\hor(W,\dot\gamma)\dot\gamma \\
 & \qquad +\hat{\nabla}^\varepsilon_{\dot\gamma}(T( W,\dot\gamma)) + \kappa (T(J_{\dot\gamma}W,\dot\gamma)+\langle W,\dot\gamma_\hor \rangle\dot\gamma_\ver) +\kappa^2 \|\dot\gamma_\ver\|^2 (W_\ver)_\perp.
\end{align}
\end{enumerate}
\end{lemma}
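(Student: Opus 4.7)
The proof is a direct but tensorially intricate computation. For part (a), I would first substitute $\hat\nabla^\varepsilon_X Y = \nabla_X Y + J^\varepsilon_X Y$ and $\nabla^\varepsilon_X Y = \nabla_X Y - T(X,Y) + J^\varepsilon_Y X$ into $\hat\nabla^\varepsilon_{\dot\gamma}\nabla^\varepsilon_{\dot\gamma}W$ and expand term by term using the Leibniz rule. Computing first $\nabla^\varepsilon_{\dot\gamma}W = \nabla_{\dot\gamma}W - T(\dot\gamma,W) + J^\varepsilon_W\dot\gamma$ and then applying $\hat\nabla^\varepsilon_{\dot\gamma}$ produces, after converting $\nabla_{\dot\gamma}\nabla_{\dot\gamma}W$ into $\hat\nabla^\varepsilon_{\dot\gamma}\hat\nabla^\varepsilon_{\dot\gamma}W$ (which yields the correction terms $-J^\varepsilon_{\dot\gamma}\hat\nabla^\varepsilon_{\dot\gamma}W$ and $J^\varepsilon_{\hat\nabla^\varepsilon_{\dot\gamma}W}\dot\gamma$), a sum involving $(\nabla_{\dot\gamma}T)(W,\dot\gamma)$, $\hat\nabla^\varepsilon_{\dot\gamma}(T(W,\dot\gamma))$, and several $J^\varepsilon$ compositions such as $J^\varepsilon_{T(W,\dot\gamma)}\dot\gamma$.

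Next I would compute the curvature of $\hat\nabla^\varepsilon=\nabla+J^\varepsilon$ via the standard identity
\begin{equation*}
\hat R^\varepsilon(X,Y)Z = R(X,Y)Z + (\nabla_X J^\varepsilon)_Y Z - (\nabla_Y J^\varepsilon)_X Z - J^\varepsilon_{T(X,Y)}Z + [J^\varepsilon_X,J^\varepsilon_Y]Z,
\end{equation*}
specialize to $(X,Y,Z)=(W,\dot\gamma,\dot\gamma)$, and use the antisymmetry $(\nabla_Z J)_W = -(\nabla_W J)_Z$ from Lemma~\ref{l:someproperties} to merge the curvature contribution with those produced in the previous step. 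Because the Hladky connection preserves the splitting $\hor\oplus\ver$, so does its curvature $R$, and in the totally geodesic H-type setting the mixed horizontal--vertical components of $R(W,\dot\gamma)\dot\gamma$ vanish; only $R_\hor(W,\dot\gamma)\dot\gamma$ and $R_\ver(W,\dot\gamma)\dot\gamma$ survive, concluding (a).

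For part (b), I would exploit that horizontally parallel torsion is equivalent to $(\nabla_X J)_Z = 0$ for every horizontal $X$ (obtained by differentiating $\langle J_Z X,Y\rangle = \langle Z,T(X,Y)\rangle$ and using the metricity of $\nabla$); hence $(\nabla_{\dot\gamma}J^\varepsilon)_W\dot\gamma$ collapses to its $\dot\gamma_\ver$-contribution acting on $W_\ver$, since $J$ kills horizontal vectors. The Clifford identity~\eqref{CliffordKappa} then rewrites $(\nabla_{\dot\gamma_\ver}J)_{W_\ver}\dot\gamma$ in terms of $J_{\dot\gamma_\ver}J_{W_\ver}\dot\gamma$ and an $\langle \dot\gamma_\ver,W_\ver\rangle$-correction, and identity~\eqref{eq:nablaTnablaJ} produces the analogous rewriting for $(\nabla_{\dot\gamma}T)(W,\dot\gamma)$, which yields the torsion-derivative piece $\kappa(T(J_{\dot\gamma}W,\dot\gamma)+\langle W,\dot\gamma_\hor\rangle\dot\gamma_\ver)$. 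To identify $R_\ver(W,\dot\gamma)\dot\gamma$ with $\kappa^2\|\dot\gamma_\ver\|^2(W_\ver)_\perp$, I would differentiate the Clifford identity vertically and substitute it back, using the H-type relation $J_Z^2=-\|Z\|^2\mathbbold{1}_\hor$ to close the computation.

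The chief obstacle is the bookkeeping of signs and indices in the interplay between $\nabla$- and $\hat\nabla^\varepsilon$-derivatives, and in verifying that all mixed-type contributions indeed cancel or regroup correctly into $R_\hor$, $R_\ver$, and the explicit $J^\varepsilon$ and $T$ terms. Organizing the argument by first establishing the master formula (a) for general H-type foliations with horizontally parallel torsion, and only then imposing the parallel horizontal Clifford structure in (b), confines the most delicate algebraic manipulations to the final step, where the Clifford identity must be applied once to rewrite first-order derivatives and a second time to produce the quadratic $\kappa^2$-term from the vertical curvature.
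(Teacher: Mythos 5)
Your overall strategy matches the paper's: expand $\nabla^\varepsilon_{\dot\gamma}W = \hat\nabla^\varepsilon_{\dot\gamma}W - \hat T^\varepsilon(\dot\gamma,W)$, apply $\hat\nabla^\varepsilon_{\dot\gamma}$, compute $\hat R^\varepsilon$ in terms of $R$ and $J^\varepsilon$ via the standard comparison identity, and only then impose the Clifford condition for part (b). The step that is wrong, however, is the claim that ``the mixed horizontal--vertical components of $R(W,\dot\gamma)\dot\gamma$ vanish; only $R_\hor(W,\dot\gamma)\dot\gamma$ and $R_\ver(W,\dot\gamma)\dot\gamma$ survive.'' The Hladky connection preserving the splitting only guarantees $R(X,Y)\hor\subset\hor$ and $R(X,Y)\ver\subset\ver$; it does not make $R(W,X)X$ equal to $R(W_\hor,X_\hor)X_\hor + R(W_\ver,X_\ver)X_\ver$. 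The cross-type contributions such as $R(W_\hor,\dot\gamma_\ver)\dot\gamma_\hor$ and $R(W_\ver,\dot\gamma_\hor)\dot\gamma_\hor$ do \emph{not} vanish; the correct statement is
\begin{equation}
R(W,X)X = R_\hor(W,X)X + R_\ver(W,X)X + (\nabla_X T)(W,X),
\end{equation}
which is exactly \cite[Lemma~2.7]{BGMR-Htypefoliations}, cited by the paper at precisely this point, and which is where the $(\nabla_{\dot\gamma}T)(W,\dot\gamma)$ term in the final formula actually comes from. In your write-up you attribute that term to the expansion of $\hat\nabla^\varepsilon_{\dot\gamma}\nabla^\varepsilon_{\dot\gamma}W$, but that expansion already accounts for $\hat\nabla^\varepsilon_{\dot\gamma}(T(W,\dot\gamma))$ and does not also produce a separate $(\nabla_{\dot\gamma}T)(W,\dot\gamma)$ without double-counting. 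So either you would drop a term (if you take the curvature decomposition at face value without the $\nabla T$ piece) or you would over-count it. You need to either cite or prove the Lemma-2.7-type identity; it is a genuine structural input of the Bott/Hladky connection on a totally geodesic foliation, not a formal consequence of $\nabla$ preserving the splitting.

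For part (b), your route is sound in outline and close to the paper's. The paper expresses the Clifford condition in terms of $\nabla T$ and invokes \cite[Thm.~3.6]{BGMR-Htypefoliations} for the vertical leaves having constant Bott curvature $\kappa^2$, rather than re-deriving $R_\ver$ from the Clifford identity as you propose; both should work, though yours requires more computation. One small caveat: you should record explicitly that $J^\ve_\hor = \ve J_\hor = 0$ in the totally geodesic case and that $(\nabla_X J)_Z$ for horizontal $X$ really does vanish, which follows from $\nabla_\hor T=0$ by differentiating $\langle J_ZX,Y\rangle = \langle Z,T(X,Y)\rangle$; stating this makes the collapse of $(\nabla_{\dot\gamma}J^\ve)_W\dot\gamma$ airtight.
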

\begin{proof}
Recall that $\nabla^\varepsilon_{\dot\gamma}W = \hat{\nabla}^\varepsilon_{\dot\gamma}W -\hat{T}^\varepsilon(\dot\gamma,W)$, hence the Jacobi operator reads
\begin{equation}\label{eq:Jacobiproof}
\mathcal{Z}(W) = \hat{\nabla}^\varepsilon_{\dot\gamma}(\hat{\nabla}^\varepsilon_{\dot\gamma}W -\hat{T}^\varepsilon(\dot\gamma,W)) + \hat{R}^\varepsilon(W,\dot\gamma)\dot\gamma.
\end{equation}
The proof follows by explicit computation of the horizontal and vertical part of the above equation. We just highlight some non-trivial fact that simplify the computation. First, using the definition of $\hat{\nabla}^\varepsilon$, we obtain the general formula
\begin{equation}
\hat{R}^\varepsilon(W,X)X = R(W,X)X+(\nabla_W J^\varepsilon)_{X} X -(\nabla_{X} J^\varepsilon)_W X  +  J^\varepsilon_{T(W,X)}X + [J_W^\varepsilon,J_{X}^\varepsilon]X.
\end{equation}
Furthermore, by \cite[Lemma 2.7]{BGMR-Htypefoliations}, one has
\begin{align*}
R(W,X)X  = R_\hor(W,X)X +R_\ver(W,X)X  +  (\nabla_{X} T)(W,X).
\end{align*}
This allows to deal with the last term of \eqref{eq:Jacobiproof}. To deal with the first two terms of \eqref{eq:Jacobiproof} we use instead	
\begin{equation}
\hat{T}^\varepsilon(X,Y)  =  T(X, W) + J^\varepsilon_{X} W - J_W^\varepsilon X,
\end{equation}
and that $(\hat{\nabla}_{V} J)_V =(\nabla_{V} J)_V = 0$ for all $V\in \Gamma(\ver)$.

In the case of a parallel horizontal Clifford structure, recall that by \cite[Thm.\ 3.6]{BGMR-Htypefoliations} the leaves of the vertical foliation have constant (Bott) curvature equal to $\kappa^2$. Observe that, in terms of torsion, the defining condition \eqref{CliffordKappa} reads
\begin{equation}
(\nabla_V T)(X,Y) = \kappa \left( T(J_V X,Y) + \langle X,Y\rangle V\right), \qquad \forall X,Y \in \Gamma(\hor),\, V \in \Gamma(\ver).
\end{equation}
The proof is complete.
\end{proof}

\section{Uniform comparison theorems for H-type foliations with parallel horizontal Clifford structure}\label{s:uniformcomparison}

In this section $(\M,\hor,g)$ is an H-type foliation with parallel horizontal Clifford structure and satisfying the $J^2$ condition. Some of the results proved in this section hold under slightly more general hypotheses; we refer to the statements for the exact required assumptions. Throughout the section, we will denote $n=\rank \mathcal{H}$ and $m=\rank \mathcal V$ and make use of the following notation for the comparison functions:

\begin{equation}
F_{\mathrm{Rie}}(r,k) = \begin{cases}  \sqrt{k} \cot \sqrt{k} r & \text{if $k > 0$,} \\
\frac{1}{r} & \text{if $k = 0$,}\\ \sqrt{|k|} \coth \sqrt{|k|} r & \text{if $k < 0$,} \end{cases}
\end{equation}
and 
\begin{equation}
F_{\mathrm{Sas}}(r,k) = \begin{cases}  \frac{\sqrt{k}(\sin \sqrt{k}r  -\sqrt{k} r \cos \sqrt{k} r)}{2 - 2\cos \sqrt{k} r - \sqrt{k} r \sin \sqrt{k} r} & \text{if $k > 0$,} \\
\frac{4}{r} & \text{if $k = 0$,}\\ \frac{\sqrt{|k|}( \sqrt{|k|} r \cosh \sqrt{|k|} r - \sinh \sqrt{|k|}r)}{2 - 2\cosh \sqrt{|k|} r + \sqrt{|k|} r \sinh \sqrt{|k|} r} & \text{if $k < 0$.} \end{cases}
\end{equation}

\subsection{The splitting}\label{s:splitting}

The following splitting which is independent from $\ve$ plays a crucial role for the analysis of the index form. Fix a vector field $Y\in \Gamma(T\M)$ with $Y_\ch \neq 0$. Usually $Y=\dot\gamma$ is the tangent vector to a geodesic, in which case the definition that follows makes sense along $\gamma$. We define the splitting
\begin{equation}\label{eq:splittinghor}
\hor = \hor_{\sas}(Y) \oplus \hor_{\rie}(Y) \oplus \spn \{Y_\hor\},
\end{equation}
where each subspace is defined as
\begin{align}
\hor_{\sas}(Y) &= \{J_Z Y \mid Z \in \ver\}, \\
\hor_{\rie}(Y) &= \{X \in \hor \mid X \perp \hor_{\sas}(Y)\oplus  \spn \{Y_\hor\} \}.
\end{align}
\begin{remark}
If $Y \in T\M \setminus \V$, one can check that $\dim \hor_{\sas}(Y) = m$, and $\dim \hor_\rie (Y)= n-m-1$, and the splitting depends only on the horizontal part of $Y$. If the $J^2$ condition holds, for any non-zero $Z\in\ver$, the non-degenerate and skew-symmetric operator $J_Z:\hor \to \hor$ preserves $\hor_\rie(Y)$, and hence $m$ is odd and the dimension of $\hor_\rie$ is even.
\end{remark}

The next proposition motivates the relevance and the naturalness of the splitting in relation with the connection $\hat{\nabla}^\varepsilon$. We say that the splitting \eqref{eq:splittinghor} associated with a curve $Y=\dot\gamma$ is $\hat{\nabla}^\ve_{\dot\gamma}$-parallel if each sub-bundle composing it is $\hat{\nabla}^\ve_{\dot\gamma}$-parallel.
\begin{proposition}\label{p:parallelism}
Let $(\M,\hor,g)$ be an H-type foliation with parallel horizontal Clifford structure, satisfying the $J^2$ condition. Let $\gamma:[0,r_\varepsilon] \to \M$ be a $g_\varepsilon$-geodesic with $\dot \gamma_\hor \neq 0$. Then the splitting \eqref{eq:splittinghor} associated with $\dot\gamma$ is orthogonal and $\hat{\nabla}^\ve_{\dot\gamma}$-parallel.
\end{proposition}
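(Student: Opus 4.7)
The three sub-bundles in \eqref{eq:splittinghor} are pairwise orthogonal by definition (with respect to both $g$ and $g_\varepsilon$, since they are all horizontal). The connection $\hat{\nabla}^\varepsilon$ preserves the splitting $\hor\oplus\ver$: the Hladky connection $\nabla$ does, and in the totally geodesic setting the torsion vanishes on $\hor\times\ver$ and $\ver\times\ver$, so that $J^\varepsilon$ maps $\hor$ to $\hor$ and annihilates $\ver$. As $\hat{\nabla}^\varepsilon$ is $g_\varepsilon$-metric, it preserves orthogonal complements within $\hor$; hence it suffices to establish parallelism of two of the three factors. I will treat $\spn\{\dot\gamma_\hor\}$ and $\hor_{\sas}(\dot\gamma)$, the third following automatically.

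\textbf{Step 1 (parallelism of $\spn\{\dot\gamma_\hor\}$).} The key fact, proved in Appendix \ref{a:app}, is that $\hat{\nabla}^\varepsilon_X X = \nabla^{g_\varepsilon}_X X$ for every $X$, so $\hat{\nabla}^\varepsilon$ and the Levi-Civita connection of $g_\varepsilon$ share the same geodesics (concretely, $J^\varepsilon_X X$ coincides with minus the symmetric part of the contorsion of $\nabla$ with respect to $g_\varepsilon$). Consequently $\hat{\nabla}^\varepsilon_{\dot\gamma}\dot\gamma = 0$ along any $g_\varepsilon$-geodesic, and projecting onto $\hor$ gives $\hat{\nabla}^\varepsilon_{\dot\gamma}\dot\gamma_\hor = 0$, so that $\dot\gamma_\hor$ is itself $\hat{\nabla}^\varepsilon_{\dot\gamma}$-parallel.

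\textbf{Step 2 (parallelism of $\hor_{\sas}(\dot\gamma)$).} Pick a $\hat{\nabla}^\varepsilon_{\dot\gamma}$-parallel $g$-orthonormal frame $Z_1,\dots,Z_m$ of $\ver$ along $\gamma$; this exists because $\hat{\nabla}^\varepsilon$ preserves $\ver$. By the H-type condition and the hypothesis $\dot\gamma_\hor\neq 0$, the fields $\{J_{Z_i}\dot\gamma_\hor\}_{i=1}^m$ form a frame of $\hor_{\sas}(\dot\gamma)$. The Leibniz rule combined with Step 1 and the parallelism of $Z_i$ gives
\[
\hat{\nabla}^\varepsilon_{\dot\gamma}(J_{Z_i}\dot\gamma_\hor) = (\hat{\nabla}^\varepsilon_{\dot\gamma}J)_{Z_i}\dot\gamma_\hor,
\]
and by \eqref{eq:nablaJ} the right-hand side splits as $(\nabla_{\dot\gamma}J)_{Z_i}\dot\gamma_\hor + [J^\varepsilon_{\dot\gamma},J_{Z_i}]\dot\gamma_\hor$. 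Since horizontal parallelism of the torsion yields $\nabla_\hor J = 0$, the first summand reduces to $(\nabla_{\dot\gamma_\ver}J)_{Z_i}\dot\gamma_\hor$, evaluated by the Clifford formula \eqref{CliffordKappa}; the second one reduces to $\varepsilon^{-1}[J_{\dot\gamma_\ver},J_{Z_i}]\dot\gamma_\hor$. Decomposing $Z_i = a_i\dot\gamma_\ver + Z_i^\perp$ with $Z_i^\perp\perp\dot\gamma_\ver$, the contribution parallel to $\dot\gamma_\ver$ cancels in each term thanks to the H-type identity $J_{\dot\gamma_\ver}^2 = -\|\dot\gamma_\ver\|^2\mathbbold{1}_\hor$ (which matches the $\langle\cdot,\cdot\rangle\mathbbold{1}_\hor$ piece of \eqref{CliffordKappa} for the Clifford term, and makes the two halves of the commutator equal), while the contribution perpendicular to $\dot\gamma_\ver$ lies in $\hor_{\sas}(\dot\gamma)$ by the $J^2$ condition, which guarantees $J_{\dot\gamma_\ver}J_{Z_i^\perp}\dot\gamma_\hor = J_{Z''}\dot\gamma_\hor$ for some $Z''\in\ver$. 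Hence $\hat{\nabla}^\varepsilon_{\dot\gamma}(J_{Z_i}\dot\gamma_\hor)\in\hor_{\sas}(\dot\gamma)$.

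\textbf{Main obstacle.} The delicate step is the control of $(\hat{\nabla}^\varepsilon_{\dot\gamma}J)_{Z_i}\dot\gamma_\hor$ in Step 2: a priori both the Clifford term and the commutator produce summands involving $J_{\dot\gamma_\ver}^2\dot\gamma_\hor$, which would give rise to a spurious multiple of $\dot\gamma_\hor$ violating parallelism of $\hor_{\sas}(\dot\gamma)$. The cancellation of these spurious pieces, and the confinement of the remainder to $\hor_{\sas}(\dot\gamma)$, is exactly where the three hypotheses of the proposition---H-type, parallel horizontal Clifford structure, and the $J^2$ condition---enter jointly and essentially.
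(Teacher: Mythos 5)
Your proposal is correct and follows essentially the same route as the paper: orthogonality is immediate, parallelism of $\spn\{\dot\gamma_\hor\}$ comes from $\hat\nabla^\ve_{\dot\gamma}\dot\gamma=0$ plus the fact that $\hat\nabla^\ve$ preserves $\hor$, and then one reduces to checking $\hor_{\sas}(\dot\gamma)$ is parallel by computing $\hat\nabla^\ve_{\dot\gamma}(J_Z\dot\gamma)$ via \eqref{eq:nablaJ}, \eqref{CliffordKappa} and the H-type anticommutation, invoking the $J^2$ condition to conclude. Your expansion of $(\hat\nabla^\ve_{\dot\gamma}J)_{Z}\dot\gamma_\hor$ reproduces the paper's formula $\frac{2-\ve\kappa}{\ve}J_{\dot\gamma}J_{Z}\dot\gamma$ for $Z\perp\dot\gamma_\ver$ and its vanishing for $Z\parallel\dot\gamma_\ver$, just with a couple more intermediate steps spelled out.
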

\begin{proof}
The orthogonality of the horizontal splitting is immediate from the definition. Since the connection is metric and $\hat \nabla_{\dot \gamma }^\ve \dot \gamma = 0$,  it is sufficient to prove that $\hor_{\sas}(\dot \gamma)$ is parallel to complete the result. First we observe that $\hat \nabla_{\dot \gamma}^\ve J_{\dot \gamma_\V} \dot \gamma = 0$. Furthermore, for any $\hat \nabla^\ve$-parallel vector field $Z(t)$ with values in $\V$ and orthogonal to $\dot \gamma_\V$, we have
\begin{equation}
\hat \nabla_{\dot \gamma}^\ve J_Z\dot \gamma = \frac{2- \ve \kappa}{\ve} J_{\dot \gamma} J_Z \dot \gamma.
\end{equation}
From the $J^2$ condition, it follows that $\hor_{\sas}(\dot \gamma)$ has a basis of vector fields with covariant derivatives in itself and is hence parallel.
\end{proof}


\paragraph{Notation.} We introduce some notation that will be used in the forthcoming sections. For a $g_\varepsilon$-geodesic $\gamma :[0,r_\varepsilon] \to \M$, joining $x$ with $y \notin \Cut_\varepsilon(x)$, one has $\dot\gamma = \nabla^{g_\varepsilon}r_\varepsilon$, so that
\begin{equation}
\dot\gamma = \dot\gamma_\hor + \dot\gamma_\ver= \nabla_\hor r_\varepsilon + \varepsilon \nabla_\ver r_\varepsilon.
\end{equation}
If the geodesic has unit speed, one has $\|\dot\gamma(t)\|_\varepsilon=1$. Defining 
\begin{equation}
h := \|\nabla_\hor r_\varepsilon\|, \qquad v := \|\nabla_\ver r_\varepsilon\|,
\end{equation}
one has that $h$ and $v$ are constant along the geodesic, and the following relations hold:
\begin{equation}
\|\dot\gamma_\hor\|^2_\varepsilon = h^2, \qquad \|\dot\gamma_\ver\|^2_\varepsilon = \varepsilon v^2, \qquad h^2+\varepsilon v^2 =1.
\end{equation}
We recall that $\mathrm{Hess}$ denotes the Hessian with respect to the reference Bott connection $\nabla$.

\subsection{Comparison in the direction of the geodesic}

The study of $\mathrm{Hess}(r_\varepsilon)$ in the direction of the geodesic joining $x$ with $y \notin \Cut_\varepsilon(x)$ is elementary, as a consequence of the eikonal equation $\| \nabla^{g_\ve} r_\ve \|_\ve = 1$. For the case of horizontal projection, some care is necessary if one wants a result uniform with respect to  $\varepsilon>0$. We give here a self-contained and somewhat sharper statement than the one in \cite[Thm.\ 3.5 (1)]{BGKT17}.

\begin{theorem}\label{t:geoddir}
 Let $(\M,\hor,g)$ a totally geodesic foliation. Then, at all points $y\notin \Cut_\ve(x)$ with $\nabla_\hor r_\ve(y) \neq 0$, it holds
 \begin{equation}
 \mathrm{Hess}(r_\varepsilon)\left(\frac{\nabla_\hor r_\varepsilon}{\|\nabla_\hor r_\varepsilon\|},\frac{\nabla_\hor r_\varepsilon}{\|\nabla_\hor r_\varepsilon\|}\right) \leq \frac{1-\|\nabla_\hor r_\varepsilon\|^2}{r_\varepsilon}.
 \end{equation}
\end{theorem}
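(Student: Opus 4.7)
The plan is to reduce the claim to an identity of the form $\mathrm{Hess}(r_\varepsilon)(X,X) = X(h)$ with $h := \|\nabla_\hor r_\varepsilon\|$ and $X := \nabla_\hor r_\varepsilon / h$, and then to bound $X(h)$ by a short argument based on the $g_\varepsilon$-eikonal $\|\nabla^{g_\varepsilon} r_\varepsilon\|_\varepsilon = 1$ together with the Gauss lemma for $\exp^{g_\varepsilon}_x$.

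First I would compute the Bott Hessian directly. Since $X$ is a horizontal unit vector field on $\M \setminus \Cut_\varepsilon(x)$ and $\nabla^g r_\varepsilon = \nabla_\hor r_\varepsilon + \nabla_\ver r_\varepsilon$, one has $X r_\varepsilon = \langle X, \nabla_\hor r_\varepsilon\rangle = h$. Because the Bott connection preserves the horizontal bundle, $\nabla_X X$ is horizontal, and $\|X\|^2 \equiv 1$ then forces $\langle \nabla_X X, X\rangle = \tfrac{1}{2} X(\|X\|^2) = 0$, so that $(\nabla_X X)(r_\varepsilon) = h \langle \nabla_X X, X\rangle = 0$. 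Consequently
\[
\mathrm{Hess}(r_\varepsilon)(X,X) \;=\; X(X r_\varepsilon) - (\nabla_X X)(r_\varepsilon) \;=\; X(h).
\]
The target inequality becomes $X(h) \leq (1-h^2)/r_\varepsilon$, and since $X r_\varepsilon = h$ this is equivalent to $X(r_\varepsilon h) \leq 1$.

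For the key step I would pass to the $g_\varepsilon$-exponential chart at $x$. Let $\Phi := \exp^{g_\varepsilon}_x$ and $v := r_\varepsilon \dot\gamma(0)$ so that $\Phi(v) = y$. The classical fact that $h$ is conserved along $g_\varepsilon$-geodesics emanating from $x$ (a direct consequence of the totally geodesic foliation hypothesis, via the Riemannian-submersion character of the bundle-like metric) gives the identities
\[
r_\varepsilon(\Phi(w)) = \|w\|_\varepsilon, \qquad h(\Phi(w)) = \frac{\|w_\hor\|}{\|w\|_\varepsilon}, \qquad \text{hence} \qquad (r_\varepsilon h)(\Phi(w)) = \|w_\hor\|.
\]
Setting $\eta := d\Phi^{-1}|_v(X) \in T_x\M$ and $X_0 := v_\hor/\|v_\hor\|$ (a unit horizontal vector at $x$), differentiation in the chart yields $X(r_\varepsilon h) = \langle X_0, \eta_\hor\rangle$. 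The Gauss lemma for $\Phi$ at $v$ applied to the pair $(\eta, v)$, combined with $d\Phi|_v(v) = r_\varepsilon \dot\gamma(y)$ and $d\Phi|_v(\eta) = X$, gives $\langle \eta, \dot\gamma(0)\rangle_\varepsilon = \langle X, \dot\gamma(y)\rangle_\varepsilon = h$. Decomposing this constraint in the $\hor \oplus \ver$ splitting leads to the explicit formula
\[
X(r_\varepsilon h) \;=\; \langle X_0, \eta_\hor\rangle \;=\; 1 - \frac{1}{h\varepsilon} \langle \eta_\ver, \dot\gamma(0)_\ver\rangle,
\]
so the bound $X(r_\varepsilon h) \leq 1$ reduces to the non-negativity of $\langle \eta_\ver, \dot\gamma(0)_\ver\rangle$. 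This last sign condition is the main obstacle; I expect it to follow by analyzing the Jacobi field along $\gamma$ with $J(0) = 0$ and $J(r_\varepsilon) = X$, using that its initial velocity $J'(0) = \eta/r_\varepsilon$ inherits a specific alignment with the vertical component $\dot\gamma(0)_\ver$ of the initial velocity, dictated by the totally geodesic nature of the vertical leaves and the bundle-like property of $g$.
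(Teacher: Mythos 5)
Your reduction is correct as far as it goes, and it is a genuinely different route from the paper's, but it has a real gap: the key inequality is left as a speculation.

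Your first step, $\mathrm{Hess}(r_\varepsilon)(X,X) = X(h)$ with $X = \nabla_\hor r_\varepsilon/h$, is a valid and clean identity (it follows from $\nabla$ preserving $\hor$ and $\|X\|\equiv 1$). The passage to the $g_\varepsilon$-exponential chart, the use of conservation of $h$ along $g_\varepsilon$-geodesics, and the Gauss-lemma computation leading to
\begin{equation}
X(r_\varepsilon h) \;=\; 1 - \frac{1}{h\varepsilon}\,\langle \eta_\ver,\dot\gamma(0)_\ver\rangle,
\end{equation}
are all correct. However, you then acknowledge that the desired inequality reduces to $\langle \eta_\ver,\dot\gamma(0)_\ver\rangle \ge 0$ and call this ``the main obstacle,'' offering only a heuristic that it should follow from a Jacobi-field alignment. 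That sign condition \emph{is} the content of the theorem. It is not a soft consequence of the totally geodesic structure: $\eta$ is determined by the backward Jacobi field solving a boundary-value problem along $\gamma$, and its vertical part a priori depends on curvature. What forces the sign without any curvature hypothesis is exactly the index-form (second-variation) comparison against a well-chosen \emph{test} field, not the actual Jacobi field. The paper does this by taking $W_0(t) = \tfrac{t}{r_\varepsilon}\dot\gamma_{\hor,\perp}(t)$ and observing that, because $\hat{\nabla}^\varepsilon$ is $g_\varepsilon$-metric, the integrand in Theorem~\ref{t:comparison} is identically zero for this $W_0$; the index lemma then yields the inequality without ever solving the Jacobi boundary problem. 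Your chart computation re-expresses what must be proved, but it does not prove it; to close the argument along your lines you would still need to invoke the index lemma (or an equivalent second-variation argument), at which point you have essentially rederived the paper's proof from a different starting point. Until the sign of $\langle \eta_\ver,\dot\gamma(0)_\ver\rangle$ is established, the proposal does not prove the theorem.
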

\begin{proof}
Let $\gamma : [0,r_\varepsilon] \to \M$ be the unique $g_\varepsilon$-geodesic, parametrized with unit speed, joining $x$ with $y$. Recall that $\dot\gamma = \nabla_\hor r_\varepsilon + \varepsilon \nabla_\ver r_\varepsilon$. Let
\begin{equation}
W_0(t) = \frac{t}{r_\varepsilon} \dot\gamma_{\hor,\perp}(t), \qquad\text{where}\qquad \dot\gamma_{\hor,\perp} =\dot\gamma_\hor(t) - h^2 \dot\gamma(t), \qquad \forall t\in [0,r_\varepsilon].
\end{equation}
Notice that $W_0$ is $g_\varepsilon$-orthogonal to $\dot\gamma$ by construction, and \emph{not} horizontal. Furthermore $\hat{\nabla}^\varepsilon_{\dot\gamma} W_0 = \tfrac{1}{r_\varepsilon} \dot\gamma_{\hor,\perp}$. Since $\hat{\nabla}^\varepsilon$ is metric, one immediately deduces that for a general foliation
\begin{equation}
\langle\hat{\nabla}^\varepsilon_{\dot\gamma}\nabla^\varepsilon_{\dot\gamma} W_0 + \hat{R}^\varepsilon(W_0,\dot\gamma)\dot\gamma,W_0\rangle_\varepsilon =0,
\end{equation}
so that condition \eqref{eq:comparison-condition} of Theorem \ref{t:comparison} is verified. We obtain hence from the latter:
\begin{equation}\label{eq:comparisongeod2}
\mathrm{Hess}^{\hat{\nabla}^\varepsilon}(r_\varepsilon)(\dot\gamma_{\hor,\perp},\dot\gamma_{\hor,\perp}) \leq \frac{\|\dot\gamma_{\hor,\perp}\|^2_\varepsilon}{r_\varepsilon} = \frac{h^2(1-h^2)}{r_\varepsilon}.
\end{equation}
Using respectively the eikonal equation $\|\nabla^{g_\varepsilon} r_\varepsilon\|_\varepsilon =1$, and the fact that $\hat{\nabla}^\varepsilon$ has completely skew-symmetric torsion (cf.\ Lemma \ref{l:constructionskew}), one has
\begin{equation}
\mathrm{Hess}^{\hat{\nabla}^\varepsilon}(r_\varepsilon)(\dot\gamma_{\hor,\perp},\dot\gamma_{\hor,\perp}) = \mathrm{Hess}^{\hat{\nabla}^\varepsilon}(r_\varepsilon)(\dot\gamma_{\hor},\dot\gamma_{\hor}) = \mathrm{Hess}(r_\varepsilon)(\dot\gamma_{\hor},\dot\gamma_{\hor}).
\end{equation}
This fact, together with \eqref{eq:comparisongeod2}, completes the proof.
To relate the Hessian in \eqref{eq:comparisongeod2} with Bott's one observe that, for any $g_\varepsilon$-metric connection $D$, with torsion $\Tor$ it holds
\begin{equation}\label{eq:hesssymm}
\mathrm{Hess}^D(u)(X,Y) = \mathrm{Hess}^D(u)(Y,X) - \langle \nabla u, \Tor(X,Y)\rangle_\varepsilon, \qquad \forall X,Y \in T\M.
\end{equation}
Since $r_\varepsilon$ satisfies the eikonal equation $\|\nabla^{g_\varepsilon} r_\varepsilon\|_\varepsilon =1$, it holds
\begin{equation}
\mathrm{Hess}^D(r_\varepsilon)(X,Y+\nabla^{g_\varepsilon} r_\varepsilon) = \mathrm{Hess}^D(r_\varepsilon)(X,Y), \qquad \forall X,Y \in T\M.
\end{equation}
Combining this with \eqref{eq:hesssymm}, we obtain
\begin{equation}
\mathrm{Hess}^D(r_\varepsilon)(X+\nabla^{g_\varepsilon} r_\varepsilon,Y) = -\langle \nabla^{g_\varepsilon} r_\varepsilon, \Tor(\nabla^{g_\varepsilon} r_\varepsilon,Y)\rangle_\varepsilon, \qquad \forall X,Y \in T\M.
\end{equation}
Set $D=\hat{\nabla}^\varepsilon$. In this case $\Tor$ is completely skew-symmetric (cf.\ Lemma \ref{l:constructionskew}), and hence
\begin{equation}\label{eq:nullspace}
\mathrm{Hess}^{\hat{\nabla}^\varepsilon}(r_\varepsilon)(X+\nabla^{g_\varepsilon} r_\varepsilon, Y+ \nabla^{g_\varepsilon} r_\varepsilon) = \mathrm{Hess}^{\hat{\nabla}^\varepsilon}(r_\varepsilon)(X, Y), \qquad \forall X,Y\in T\M.
\end{equation}
If $X=Y=\nabla_\hor r_\varepsilon$, the above Hessians can be computed equivalently with respect to $\nabla$ (cf.\ Remark \ref{rmk:hessian}). The result follows.
\end{proof}

In the next two sections we discuss Hessian comparison theorems in the remaining directions of the splitting introduced in Section \ref{s:splitting}.

\subsection{Comparison of Riemannian type} \label{sec:ComparisonRiemannian}

Let $x\in \M$ be fixed and $y\notin \Cut_\ve(x)$, so that $r_\ve = d_\ve(x,\cdot)$ is smooth in a neighbourhood of $y$. In this case, provided that $\nabla_\hor r_\ve \neq 0$, we set for brevity
\begin{equation} \label{notationsubspace}
\hor_\rie(y) := \hor_\rie(\nabla r_\ve(y)), \qquad \hor_\sas(y) :=\hor_\sas(\nabla r_\ve(y)).
\end{equation}
In this section we focus on the index form in the space $\hor_{\rie}$. This will yield comparison theorems of Riemannian type. Recall that, for an horizontal vector $X$, the condition $X\in \hor_\rie(Y)$ is equivalent to the condition $X\perp Y$ and $X \perp J_Z Y$ for all $Z\in \ver$.

\begin{theorem}\label{t:Riemannian-sect}
Let $(M,\hor,g)$ be an H-type foliation with parallel horizontal Clifford structure, satisfying the $J^2$ condition. Assume that there exists $\rho \in \R$ such that
\begin{equation}
\mathrm{Sec}(X \wedge Y) \geq \rho,\qquad \forall X,Y\in \hor, \quad \text{with} \quad X \in \hor_\rie(Y).
\end{equation}
Let $y\notin \Cut_\ve(x)$ with $\nabla_\hor r_\ve(y) \neq 0$. Let $X\in \hor_{\rie}(y)$, with $\|X\|=1$. Then at $y$ it holds
\begin{equation}
\mathrm{Hess}(r_\varepsilon)(X,X) \leq  F_{\mathrm{Riem}}(r_\varepsilon,K), \qquad K = \rho \|\nabla_\hor r_\varepsilon\|^2 + \tfrac{1}{4}\|\nabla_\ver r_\varepsilon\|^2.
\end{equation}
Provided that $K>0$, it follows that
\begin{equation}
r_\varepsilon(y) < \frac{\pi}{\sqrt{\rho \|\nabla_\hor r_\varepsilon\|^2 + \tfrac{1}{4}\|\nabla_\ver r_\varepsilon\|^2}}.
\end{equation}
\end{theorem}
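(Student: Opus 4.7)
The plan is to apply the Comparison Principle (Theorem~\ref{t:comparison}) along the unique unit-speed $g_\ve$-geodesic $\gamma:[0,r_\ve]\to\M$ from $x$ to $y$, with a two-parameter trial field in $\hor_\rie$. A scalar ansatz $W=f\bar X$ would only yield $F_\mathrm{Riem}(r_\ve,\rho h^2)$; the improvement to $K=\rho h^2+\tfrac14 v^2$ comes from coupling $\bar X$ to its $J^\ve_{\dot\gamma_\ver}$-rotated partner, producing a crucial cross-term in the index form.

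\emph{Step 1 (parallel frame in $\hor_\rie$).} Recall $h=\|\nabla_\hor r_\ve\|$, $v=\|\nabla_\ver r_\ve\|$, constant along $\gamma$. Extend $X$ to a $\hat\nabla^\ve$-parallel unit horizontal field $\bar X$ along $\gamma$ with $\bar X(r_\ve)=X$; Proposition~\ref{p:parallelism} ensures $\bar X(t)\in\hor_\rie(\dot\gamma(t))$ throughout. Since $\hat\nabla^\ve$ has totally skew-symmetric torsion (Lemma~\ref{l:constructionskew}), it shares geodesics with $g_\ve$, so $\hat\nabla^\ve_{\dot\gamma}\dot\gamma=0$ and hence $\hat\nabla^\ve_{\dot\gamma}\dot\gamma_\ver=0$. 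Assuming $v>0$, set
\begin{equation}
Y(t) := \tfrac{1}{v}\,J^\ve_{\dot\gamma_\ver}\bar X(t)
\end{equation}
(the case $v=0$ reduces to the scalar ansatz). Using \eqref{eq:nablaJ}, the horizontal parallelism of $T$, and the Clifford identity \eqref{CliffordKappa} specialized to $Z_1=Z_2=\dot\gamma_\ver$, one verifies $(\hat\nabla^\ve_{\dot\gamma}J)_{\dot\gamma_\ver}\bar X=0$, so $Y$ is $\hat\nabla^\ve$-parallel. The H-type identity gives $(J^\ve_{\dot\gamma_\ver})^2=-v^2\mathbbold 1_\hor$, making $Y$ a unit field orthogonal to $\bar X$; together with the $J^2$ condition and $\bar X\in\hor_\rie(\dot\gamma)$, this forces $Y\in\hor_\rie(\dot\gamma)$ as well.

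\emph{Step 2 (index form).} Take $W(t)=a(t)\bar X(t)+b(t)Y(t)$ with $a(0)=b(0)=0$, $a(r_\ve)=1$, $b(r_\ve)=0$, so that $W(r_\ve)=X$ and $W\perp_{g_\ve}\dot\gamma$. Inserting into Lemma~\ref{l:Jacobiequation}(b), every $\kappa$- and $\kappa^2$-dependent term vanishes as $W_\ver=0$; moreover $\hat\nabla^\ve_{\dot\gamma}(T(W,\dot\gamma))\in\ver\perp W$, the Sasakian torsion term $J^\ve_{T(W,\dot\gamma)}\dot\gamma\in\hor_\sas(\dot\gamma)\perp W$, and $J^\ve_{\hat\nabla^\ve_{\dot\gamma}W}\dot\gamma=0$ because $\hat\nabla^\ve_{\dot\gamma}W\in\hor$ and $J^\ve_\hor=0$. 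Using the explicit rotation $J^\ve_{\dot\gamma_\ver}\bar X=vY$, $J^\ve_{\dot\gamma_\ver}Y=-v\bar X$, the two surviving contributions give
\begin{equation}
\langle\mathcal Z(W),W\rangle_\ve = aa''+bb'' + v(ab'-a'b) + \langle R_\hor(W,\dot\gamma)\dot\gamma,W\rangle_\ve,
\end{equation}
and the curvature hypothesis applied to $W/\|W\|\in\hor_\rie(\dot\gamma)$ bounds $\langle R_\hor(W,\dot\gamma)\dot\gamma,W\rangle_\ve\ge\rho h^2(a^2+b^2)$. Integration by parts against the boundary conditions yields
\begin{equation}
\int_0^{r_\ve}\!\langle\mathcal Z(W),W\rangle_\ve\,dt \;\ge\; a'(r_\ve) - \int_0^{r_\ve}\!\!\bigl[(a')^2+(b')^2+2v\,a'b-\rho h^2(a^2+b^2)\bigr]\,dt.
\end{equation}

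\emph{Step 3 (complex ODE and conclusion).} The integrand is minimized by the coupled system $a''+vb'+\rho h^2 a=0$, $b''-va'+\rho h^2 b=0$; setting $w:=a+ib$ this becomes the single equation $w''-ivw'+\rho h^2 w=0$, with characteristic exponents $i(v/2\pm\sqrt K)$ exposing $K=\rho h^2+\tfrac14 v^2$. The solution meeting the boundary data is
\begin{equation}
w(t)=\frac{e^{iv(t-r_\ve)/2}\sin(\sqrt K\,t)}{\sin(\sqrt K\,r_\ve)}
\end{equation}
(with the usual $\coth$ or $1/r_\ve$ substitutions for $K\le 0$). For this minimizer the index form is non-negative, and $a'(r_\ve)=\mathrm{Re}\,w'(r_\ve)=\sqrt K\cot(\sqrt K\,r_\ve)=F_\mathrm{Riem}(r_\ve,K)$. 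Theorem~\ref{t:comparison} then yields $\mathrm{Hess}^{\hat\nabla^\ve}(r_\ve)(X,X)\le a'(r_\ve)$, which coincides with $\mathrm{Hess}(r_\ve)(X,X)$ by Remark~\ref{rmk:hessian}. For the Bonnet-Myers consequence, if $K>0$ and $r_\ve(y)\ge\pi/\sqrt K$, the model Jacobi field $w$ would vanish inside $(0,r_\ve]$, forcing a conjugate point along $\gamma$ before $y$, incompatible with $y\notin\Cut_\ve(x)$.

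\emph{Main obstacle.} The delicate ingredient is Step~1: proving simultaneously that $Y=v^{-1}J^\ve_{\dot\gamma_\ver}\bar X$ is $\hat\nabla^\ve$-parallel and lies in $\hor_\rie(\dot\gamma)$. It is precisely here that the three structural assumptions --- H-type, parallel horizontal Clifford structure, and $J^2$ condition --- intervene in concert. Without this companion direction, the cross-term $v(ab'-a'b)$ is absent and the sharp $\tfrac14 v^2$ shift in $K$, which is responsible for the meaningful sub-Riemannian limit, is lost.
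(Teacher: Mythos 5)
Your proof is correct and follows essentially the same strategy as the paper's: work along the unique minimizing $g_\ve$-geodesic, take a $\hat\nabla^\ve$-parallel $\bar X$ in $\hor_\rie(\dot\gamma)$ together with its rotated companion $Y=\tfrac{1}{\ve v}J_{\dot\gamma}\bar X$ (your $\tfrac{1}{v}J^\ve_{\dot\gamma_\ver}\bar X$ is the same object via $J^\ve_\ver=\tfrac1\ve J_\ver$), insert the two-parameter field $W=a\bar X+bY$, reduce the constant-curvature Jacobi system to $\ddot w-iv\dot w+\rho h^2w=0$ via $w=a+ib$, and invoke Theorem~\ref{t:comparison} plus Remark~\ref{rmk:hessian}. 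The only cosmetic differences are that you re-derive the parallelism of $Y$ directly from \eqref{CliffordKappa} and \eqref{eq:nablaJ} rather than citing Proposition~\ref{p:parallelism} and Lemma~\ref{l:someproperties}, and that you organize the verification of condition \eqref{eq:comparison-condition} via an integration by parts and a minimization of the index quadratic form, whereas the paper checks $\langle\mathcal Z(W),W\rangle_\ve\ge 0$ pointwise — these are equivalent.
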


\begin{theorem}\label{t:Riemannian-average}
Let $(\M,\hor,g)$ be an H-type foliation with parallel horizontal Clifford structure, satisfying the $J^2$ condition. Assume that $n-m-1>0$ and that there exists $\rho \in \R$ such that
\begin{equation}
\sum_{i=1}^{n-m-1} \mathrm{Sec}(X \wedge X_i) \geq  (n-m-1)\rho,\qquad \forall X\in \hor,
\end{equation}
where $X_1,\dots,X_{n-m-1}$ is an orthonormal frame for $\hor_{\rie}(X)$. Then, for all $y\notin \Cut_\ve(x)$ with $\nabla_\hor r_\ve(y) \neq 0$ it holds
\begin{equation}
\sum_{i=1}^{n-m-1} \mathrm{Hess}(r_\varepsilon)(X_i,X_i) \leq (n-m-1) F_{\mathrm{Riem}}(r_\varepsilon,K),  \qquad K = \rho \|\nabla_\hor r_\varepsilon\|^2 +\tfrac{1}{4}\|\nabla_\ver r_\varepsilon\|^2.
\end{equation}
where here $X_1,\dots,X_{n-m-1}$ is an orthonormal basis for $\hor_{\rie}(y)$. Provided that $K >0$, it follows that
\begin{equation}
r_\varepsilon(y) < \frac{\pi}{\sqrt{\rho \|\nabla_\hor r_\varepsilon\|^2 + \tfrac{1}{4}\|\nabla_\ver r_\varepsilon\|^2}}.
\end{equation}
\end{theorem}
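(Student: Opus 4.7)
The plan is to apply the comparison principle, Theorem \ref{t:comparison}, with $\ell = n-m-1$ test vector fields along the unique $g_\ve$-geodesic $\gamma:[0,r_\ve]\to\M$ joining $x$ and $y$. By Proposition \ref{p:parallelism}, the sub-bundle $\hor_\rie(\dot\gamma)$ is $\hat{\nabla}^\ve_{\dot\gamma}$-parallel along $\gamma$, so I select a $\hat{\nabla}^\ve_{\dot\gamma}$-parallel orthonormal frame $E_1,\ldots,E_{n-m-1}$ of $\hor_\rie(\dot\gamma)$ whose values at $y=\gamma(r_\ve)$ are the prescribed orthonormal basis of $\hor_\rie(y)$. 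The test fields are $W_i(t) := f(t)\, E_i(t)$ for a scalar $f$ with $f(0) = 0$ and $f(r_\ve) = 1$, to be chosen. Since each $E_i$ is horizontal with $E_i \perp \dot\gamma_\hor$, every $W_i$ is $g_\ve$-orthogonal to $\dot\gamma$, as required.

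Next, I compute the sum $\sum_i \int_0^{r_\ve}\langle\mathcal{Z}(W_i),W_i\rangle_\ve\,dt$ using the Jacobi operator formula in Lemma \ref{l:Jacobiequation}(b). Most terms simplify drastically: horizontality of $W_i$ annihilates every summand containing $(W_\ver)_\perp$, including the $\kappa^2\|\dot\gamma_\ver\|^2$ contribution; the defining condition $E_i \perp J_Z\dot\gamma_\hor$ combined with the totally geodesic foliation identity $T(\hor,\ver)=0$ forces $T(E_i,\dot\gamma) = 0$, removing the torsion summands $J^\ve_{T(W,\dot\gamma)}\dot\gamma$ and $\hat{\nabla}^\ve_{\dot\gamma}(T(W,\dot\gamma))$; and the $g_\ve$-skew-symmetry of $J^\ve_Z$ kills the cross-terms $\langle J^\ve_{\dot\gamma}\hat{\nabla}^\ve_{\dot\gamma}W_i,W_i\rangle_\ve$ and $\langle J^\ve_{\hat{\nabla}^\ve_{\dot\gamma}W_i}\dot\gamma,W_i\rangle_\ve$ upon diagonal pairing. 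Integrating by parts via the adjoint relation between $\hat{\nabla}^\ve$ and $\nabla^\ve$ converts the second-derivative term into a boundary contribution at $t=r_\ve$ and the kinetic integral $\int\|\nabla^\ve_{\dot\gamma}W_i\|^2_\ve\,dt$; since $\hat{T}^\ve(\dot\gamma,W_i)$ has horizontal part $\tfrac{f}{\ve}J_{\dot\gamma_\ver}E_i$, the H-type identity $J_Z^2 = -\|Z\|^2\mathbbold{1}_\hor$ contributes an extra $f^2 v^2$ term to this norm. Applying the averaged Bott-sectional-curvature bound $\sum_i \mathrm{Sec}(E_i\wedge\dot\gamma_\hor) \geq (n-m-1)\rho$ to the horizontal curvature terms and completing squares leads to
\begin{equation*}
\sum_{i=1}^{n-m-1}\int_0^{r_\ve}\langle\mathcal{Z}(W_i),W_i\rangle_\ve\,dt \;\geq\; (n-m-1)\int_0^{r_\ve}\bigl(f''f + K f^2\bigr)\,dt,
\end{equation*}
with $K = \rho h^2 + \tfrac14 v^2$ as in the notation of Section \ref{s:splitting}.

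Choosing $f(t) := s_K(t)/s_K(r_\ve)$, where $s_K$ solves $s''+Ks=0$ with $s(0)=0$ and $s'(0)=1$, the integrand above vanishes identically and condition \eqref{eq:comparison-condition} holds. Theorem \ref{t:comparison} yields
\begin{equation*}
\sum_{i=1}^{n-m-1}\mathrm{Hess}^{\hat{\nabla}^\ve}(r_\ve)(E_i,E_i)\big|_y \;\leq\; (n-m-1)\,\frac{s_K'(r_\ve)}{s_K(r_\ve)} \;=\; (n-m-1)\,F_{\mathrm{Riem}}(r_\ve, K).
\end{equation*}
Since each $E_i(y)$ is horizontal, Remark \ref{rmk:hessian} identifies the left-hand side with the analogous sum for the Bott Hessian, proving the main inequality. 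The Bonnet-Myers-type bound $r_\ve<\pi/\sqrt{K}$ (in the regime $K>0$) is then immediate: $s_K(t)=\sin(\sqrt{K}t)/\sqrt{K}$ vanishes at $t=\pi/\sqrt{K}$, so $F_{\mathrm{Riem}}(r_\ve, K) \to -\infty$ as $r_\ve\uparrow\pi/\sqrt{K}$, which is incompatible with the finite Hessian sum at any $y\notin\Cut_\ve(x)$ with $\nabla_\hor r_\ve(y)\neq 0$.

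The main technical obstacle I anticipate is the clean emergence of the constant $\tfrac14$ in $K$: it demands a delicate completion of squares among the boundary term, the kinetic integral $\int(f')^2\,dt$, and the torsion-induced $f^2 v^2$ correction arising from $\hat{T}^\ve(\dot\gamma,W_i)$, leveraging both the H-type identity and the $g_\ve$-skew-symmetry of $J^\ve_{\dot\gamma_\ver}$ on $\hor_\rie(\dot\gamma)$ (which, via the $J^2$ condition, preserves this sub-bundle). Careful tracking of normalization factors when passing between the horizontal $g$-norm and the vertical $g_\ve = \tfrac{1}{\ve}g$-norm is essential.
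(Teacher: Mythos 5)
There is a genuine gap. Your test fields are $W_i(t)=f(t)E_i(t)$ with $E_i$ a $\hat{\nabla}^\ve$-parallel orthonormal frame of $\hor_\rie(\dot\gamma)$. With this ansatz, after simplifying the Jacobi operator of Lemma \ref{l:Jacobiequation}(b) using $W$ horizontal, $T(W,\dot\gamma)=0$, and $J^\ve_E\dot\gamma=0$ (for a foliation), the \emph{diagonal} pairing you need is
\begin{equation}
\langle\mathcal{Z}(W_i),W_i\rangle_\ve \;=\; f\ddot f - f\dot f\,\langle J^\ve_{\dot\gamma}E_i,E_i\rangle_\ve + f^2\langle R_\hor(E_i,\dot\gamma)\dot\gamma,E_i\rangle \;=\; f\ddot f + f^2 R_\hor(E_i,\dot\gamma,\dot\gamma,E_i),
\end{equation}
since $J^\ve_{\dot\gamma}$ is skew-symmetric. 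The rotation term has therefore \emph{completely disappeared} from the index form, and there is no $v^2$ left anywhere to complete a square against. Summing and applying the Ricci-type bound gives $\sum_i\int_0^{r_\ve}\langle\mathcal{Z}(W_i),W_i\rangle_\ve\,dt \geq (n-m-1)\int_0^{r_\ve}\bigl(f\ddot f+\rho h^2 f^2\bigr)\,dt$, so the only $f$ that annihilates the integrand solves $\ddot f+\rho h^2 f=0$, producing $\dot f(r_\ve)=F_\rie(r_\ve,\rho h^2)$, not $F_\rie(r_\ve,\rho h^2+\tfrac14 v^2)$. Since $F_\rie$ is decreasing in $K$, this is strictly weaker than the claimed bound whenever $v\neq 0$. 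The "extra $f^2v^2$ term" you invoke comes from $\|\hat T^\ve(\dot\gamma,W_i)\|^2_\ve$, but the index form contains $\langle\nabla^\ve_{\dot\gamma}W,\hat\nabla^\ve_{\dot\gamma}W\rangle_\ve$, \emph{not} $\|\nabla^\ve_{\dot\gamma}W\|_\ve^2$, and the cross term vanishes: the $v^2$ never enters.

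The missing idea is the rotating pair. The paper takes $W_i=f X_i+gY_i$ with $Y_i=\tfrac{1}{\ve v}J_{\dot\gamma}X_i$ (itself a $\hat\nabla^\ve$-parallel unit field in $\hor_\rie(\dot\gamma)$ perpendicular to $X_i$). Because $J_{\dot\gamma}X_i=\ve v Y_i$ and $J_{\dot\gamma}Y_i=-\ve v X_i$, the once-suppressed rotation term re-emerges as an \emph{off-diagonal} coupling, yielding the system $\ddot f+\dot g\,v+f\rho h^2=0$, $\ddot g-\dot f\,v+g\rho h^2=0$. Solving via $f+ig=e^{iv(t-r_\ve)/2}\sin(t\sqrt K)/\sin(r_\ve\sqrt K)$ produces exactly $K=\rho h^2+\tfrac14 v^2$ and $\dot f(r_\ve)=F_\rie(r_\ve,K)$. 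This is not a refinement you can recover by integration by parts or completion of squares with a fixed-direction test field; the Jacobi field in the constant-curvature model genuinely spirals within $\hor_\rie(\dot\gamma)$, and a non-spiraling trial field is sub-optimal. Everything else in your setup (choice of frame via Proposition \ref{p:parallelism}, use of Theorem \ref{t:comparison}, the Bonnet–Myers conclusion) is sound once this is fixed.
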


\begin{proof}[Proof of Theorems \ref{t:Riemannian-sect} and \ref{t:Riemannian-average}]
For both theorems, fix $\varepsilon>0$, $x \in \M$ and $y \notin \Cut_\varepsilon(x)$. Let $\gamma : [0,r_\varepsilon] \to \M$ be the unique $g_\varepsilon$-geodesic, parametrized with unit speed, joining~$x$ with~$y$. Observe that along $\gamma$, we have the identity $\hor_\rie(\nabla r_\varepsilon) = \hor_{\rie}(\nabla^{g_\varepsilon} r_\varepsilon) = \hor_\rie(\dot\gamma)$.

Write $h = \| \nabla_\ch r_\ve\|$ and $v = \| \nabla_\V r_\ve\|$. By Lemma \ref{l:Jacobiequation}, the Jacobi equation for a vector field $W\in\hor_\rie(\dot\gamma)$ reads
\begin{equation}
\hat{\nabla}^\varepsilon_{\dot\gamma}\hat{\nabla}^\varepsilon_{\dot\gamma} W - J^\varepsilon_{\dot\gamma}\hat{\nabla}^\varepsilon_{\dot\gamma} W+R_\hor(W,\dot\gamma)\dot\gamma = 0.
\end{equation}
Let $X\in\hor_\rie(\dot\gamma)$ be a $\hat{\nabla}^\varepsilon$-parallel and unit vector field along $\gamma$ which is possible by Proposition \ref{p:parallelism}. Using the horizontally parallel torsion condition, we have identity $(\hat{\nabla}^\varepsilon_{\dot\gamma}J)_{\dot\gamma} = 0$ (cf.\ Lemma \ref{l:someproperties}). It follows that $Y=\tfrac{1}{\varepsilon v}J_{\dot\gamma}X \in\hor_\rie(\dot\gamma)$, is also a $\hat{\nabla}^\varepsilon$-parallel and unit vector field along $\gamma$, perpendicular to $X$. Furthermore one has
\begin{equation}
J_{\dot\gamma} X =  \varepsilon  v Y, \qquad J_{\dot\gamma} Y= - \varepsilon v X.
\end{equation}
Set hence
\begin{equation}
W(t) = f(t) X(t) + g(t) Y(t), \qquad \forall t \in [0,r_\varepsilon],
\end{equation}
for some functions $f,g$ to be chosen later, and such that $f(0)=f(r_\varepsilon)-1 = g(0)=g(r_\varepsilon)=0$. If the horizontal curvature is constant and equal to $\rho$, the Jacobi equation would be
\begin{align*}
\ddot{f} + \dot{g} v  + f \rho h^2 =0, \\
\ddot{g} - \dot{f} v  + g \rho h^2 =0.
\end{align*}
This system has explicit solution, for the given boundary conditions, provided that $r_\varepsilon < \pi/ \sqrt{K}$ (with the convention $1/\sqrt{K} = +\infty$ if $K\leq 0$). It can be most easily expressed in terms of $f(t) + i g(t)$, and it is given by
\begin{equation}
f(t) + i g(t) = e^{i\tfrac{ v (t-r)}{2}} \frac{\sin( t \sqrt{K})}{\sin(r_\varepsilon \sqrt{K})}, \qquad K = \rho h^2  + \frac{v^2}{4},
\end{equation}
where the above expression is considered as an analytic function of $K \in \R$. We want to apply Theorem \ref{t:comparison} to $W$. Using Lemma \ref{l:Jacobiequation}, and the curvature lower bound, one has
\begin{align*}
\langle \hat{\nabla}^\varepsilon_{\dot\gamma}\nabla^{\varepsilon}_{\dot\gamma} W + \hat{R}_\varepsilon(W,\dot\gamma)\dot\gamma,W\rangle & = f(\ddot{f} + \dot{g} v)+ g(\ddot{g} -\dot{f} v) + R_\hor(W,\dot\gamma,\dot\gamma,W) \\
& \geq f(\ddot{f} + \dot{g} v +f \rho h^2)+ g(\ddot{g} -\dot{f} v + g \rho h^2) =0 .
\end{align*}
We can hence apply Theorem \ref{t:comparison} (cf.\ also Remark \ref{rmk:hessian}), obtaining
\begin{equation}
\mathrm{Hess}(r_\varepsilon)(X,X) \leq \dot{f}(r_\varepsilon) = F_{\rie}(r_\varepsilon, K).
\end{equation}

We now prove Theorem \ref{t:Riemannian-average}. Choose a $\hat{\nabla}^\varepsilon$-parallel frame $X_1,\dots,X_{n-m-1} \in \hor_{\rie}(\dot\gamma)$ along $\gamma$, thanks to Proposition \ref{p:parallelism}. Set $Y_i = \tfrac{1}{\varepsilon v} J_{\dot\gamma} X_i$, as in the proof above, and $W_i = f(t) X_i(t) + g(t) Y_i(t)$ for all $i=1,\dots,n-m-1$. Choosing $f$ and $g$ as in the first part of the proof, and using the Ricci-type curvature assumption, one has
\begin{equation}
\sum_{i=1}^{n-m-1} \langle \hat{\nabla}^\varepsilon_{\dot\gamma}\nabla^\varepsilon_{\dot\gamma} W_i + \hat{R}^\varepsilon(W,\dot\gamma)\dot\gamma,W\rangle \geq 0.
\end{equation}
Hence we can apply Theorem \ref{t:comparison} to the $(n-m-1)$-tuple $W_i$, obtaining the statement.
\end{proof}

\subsection{Comparison of Sasakian type} \label{sec:ComparisonSasakian}

If in this section we focus on $\hor_\sas$, in the general setting of H-type foliations with horizontally parallel torsion. The $J^2$ condition is not necessary. The first case is very particular, as it does not require nor the $J^2$ condition nor the parallel horizontal Clifford structure assumption. If $m=1$, this case exhaust the whole set of Sasakian directions and we obtain an extension of the results proved in \cite{BGKT17}.

\begin{theorem}\label{t:SasakianZpar}
Let $(\M,\hor,g)$ be an H-type foliation with horizontally parallel torsion. Assume that there exists $\rho \in \R$ such that
\begin{equation}
\mathrm{Sec}(X \wedge J_Z X) \geq \rho,\qquad \forall X\in \hor,\, Z\in \ver.
\end{equation}
Let $y \notin \Cut_\ve(x)$ with $\nabla_\hor r_\ve(y) \neq 0$. Let $Z \in \ver$ be non-zero, with $Z\propto \nabla_\ver r_\ve(y)$ (if $\nabla_\ver r_\ve(y) = 0$ the latter condition imposes no restriction on $Z$), and set
\begin{equation}
X = \frac{J_{Z}\nabla_\hor r_\varepsilon}{ \|Z\|\|\nabla_\hor r_\varepsilon\| }.
\end{equation}
Then at $y$ it holds
\begin{equation}
\mathrm{Hess}(r_\varepsilon)(X,X) \leq  F_{\mathrm{Sas}}(r_\varepsilon, K), \qquad K = \rho \|\nabla_\hor r_\varepsilon\|^2 + \|\nabla_\ver r_\varepsilon\|^2.
\end{equation}
Provided that $K >0$, it follows that
\begin{equation}
r_\varepsilon(y) < \frac{2\pi}{\sqrt{\rho \|\nabla_\hor r_\varepsilon\|^2 + \|\nabla_\ver r_\varepsilon\|^2}}.
\end{equation}
\end{theorem}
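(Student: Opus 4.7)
The proof follows the variational template of Theorems \ref{t:Riemannian-sect} and \ref{t:Riemannian-average}: along the unit-speed $g_\ve$-geodesic $\gamma:[0,r_\ve]\to\M$ from $x$ to $y$, one constructs a trial vector field $W$ verifying \eqref{eq:comparison-condition} and extracts the Hessian bound from the comparison principle (Theorem \ref{t:comparison}). Writing $h=\|\nabla_\hor r_\ve\|$ and $v=\|\nabla_\ver r_\ve\|$ with $h^2+\ve v^2=1$, both $\dot\gamma_\hor$ and $\dot\gamma_\ver$ are $\hat\nabla^\ve_{\dot\gamma}$-parallel along $\gamma$, since they have constant $g_\ve$-norm in a splitting-preserving metric connection. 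Setting $Z(t):=\dot\gamma_\ver(t)/\|\dot\gamma_\ver(t)\|$ (the $g$-unit vertical field along $\dot\gamma_\ver$) and $X(t):=J_{Z(t)}\dot\gamma_\hor(t)/h$, horizontally parallel torsion combined with Lemma \ref{l:someproperties} implies that $X(t)$ is horizontal, of unit $g_\ve$-norm, $\hat\nabla^\ve$-parallel along $\gamma$, and $g_\ve$-orthogonal to $\dot\gamma$, with $X(r_\ve)$ coinciding with the given $X$.

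The trial field will be the coupled ansatz $W(t)=f(t)X(t)+g(t)N(t)$, where $N(t)$ is a $\hat\nabla^\ve$-parallel vector field $g_\ve$-orthogonal to $\dot\gamma$ built from $Z$ by subtracting its $\dot\gamma$-component — for instance $N(t)=Z(t)-v\dot\gamma(t)$ — with boundary conditions $f(0)=g(0)=g(r_\ve)=0$ and $f(r_\ve)=1$ so that $W(0)=0$ and $W(r_\ve)=X$. Expanding $\langle\mathcal Z(W),W\rangle_\ve$ via Lemma \ref{l:Jacobiequation}(a), one uses \eqref{eq:propT} to obtain $T(X,\dot\gamma)=-hZ$; horizontally parallel torsion kills the $(\nabla_{\dot\gamma_\hor}T)$ and $(\nabla_{\dot\gamma_\hor}J)$ contributions; \eqref{eq:perpendicularity}--\eqref{eq:nablaTnablaJ} dispose of the remaining $J$-twisted pairings; and the sectional curvature hypothesis $\mathrm{Sec}(X\wedge J_ZX)\geq\rho$ bounds $\langle R_\hor(X,\dot\gamma)\dot\gamma,X\rangle$ below by $\rho h^2 f^2$. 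Combined with the vertical contributions proportional to $v^2$, condition \eqref{eq:comparison-condition} reduces to a coupled linear ODE system in $(f,g)$ whose model (equality) counterpart has constant coefficients and effective curvature $K_1=\rho h^2+v^2$.

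This model system admits an explicit solution realizing the boundary conditions precisely when $r_\ve<2\pi/\sqrt{K_1}$ (when $K_1>0$), and for this solution $\dot f(r_\ve)=F_\sas(r_\ve,K_1)$, the Sasakian comparison function. Applying Theorem \ref{t:comparison} together with Remark \ref{rmk:hessian}, which identifies $\mathrm{Hess}^{\hat\nabla^\ve}(r_\ve)(X,X)=\mathrm{Hess}(r_\ve)(X,X)$ since $X$ is horizontal, then yields the Hessian bound; the diameter estimate follows from the singularity of $F_\sas$ at $r=2\pi/\sqrt{K_1}$. The main obstacle is the intermediate bookkeeping: the pure ansatz $W=fX$ produces $J^\ve_{T(fX,\dot\gamma)}\dot\gamma=-(fh^2/\ve)X$, a term that naively diverges as $\ve\to 0$, and only through the correctly chosen coupling $gN$ do the apparently $\ve^{-1}$-singular contributions of $J^\ve_{T(W,\dot\gamma)}\dot\gamma$, $\hat\nabla^\ve_{\dot\gamma}(T(W,\dot\gamma))$ and $-J^\ve_{\dot\gamma}\hat\nabla^\ve_{\dot\gamma}W$ cancel inside the quadratic form, leaving the uniform-in-$\ve$ model with constant $K_1$. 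This cancellation is also what produces the sharp coefficient $1$ (rather than $1/4$) on $v^2$ in $K_1$, reflecting the genuinely Sasakian nature of this direction as compared with the Riemannian case.
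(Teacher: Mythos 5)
Your proposal follows the same variational template as the paper's proof: same trial field built from $J_Z\dot\gamma$ and a vertical-plus-tangential correction (your $N$ is a scalar multiple of the paper's $Z_\perp$), same use of Lemma~\ref{l:Jacobiequation} to reduce condition~\eqref{eq:comparison-condition} to a coupled linear system in $(f,g)$, same invocation of Theorem~\ref{t:comparison} and Remark~\ref{rmk:hessian}. The heuristic explanation of why the coupling term $gN$ is needed (to absorb the $\varepsilon^{-1}$-singular torsion contributions) is essentially correct, and the identification of the parallel frame $X$, $N$ along $\gamma$ is right.

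There is, however, a genuine gap at the final step. You assert that for the explicit solution of the model system one has $\dot f(r_\varepsilon) = F_{\mathrm{Sas}}(r_\varepsilon,K_1)$. This is not true: the model system retains explicit $\varepsilon$-dependence (its first equation reads $\varepsilon\ddot a + \dot b - a + a\varepsilon K = 0$, in the paper's normalization), and solving it with the prescribed boundary data yields
\begin{equation}
\dot a(r_\varepsilon) \;=\; \frac{\sqrt{K}\bigl(\sin(r_\varepsilon\sqrt{K}) - (1-\varepsilon K)\,r_\varepsilon\sqrt{K}\cos(r_\varepsilon\sqrt{K})\bigr)}{2 - 2\cos(r_\varepsilon\sqrt{K}) - (1-\varepsilon K)\,r_\varepsilon\sqrt{K}\sin(r_\varepsilon\sqrt{K})},
\end{equation}
an $\varepsilon$-dependent quantity which equals $F_{\mathrm{Sas}}(r_\varepsilon,K)$ only in the limit $\varepsilon\to 0$. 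To obtain a bound that is \emph{uniform in $\varepsilon$} one must additionally observe --- and this is singled out in the paper as ``an important observation'' --- that for fixed $r_\varepsilon$ this expression is monotone non-increasing in $\varepsilon$, hence bounded above by its value at $\varepsilon=0$, namely $F_{\mathrm{Sas}}(r_\varepsilon,K)$. Omitting this monotonicity step means you have only established a bound by an $\varepsilon$-dependent comparison function, not the claimed uniform bound by $F_{\mathrm{Sas}}$. Everything else in your argument is sound, but without this step the statement of the theorem is not proved.
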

\begin{proof}
Fix $\varepsilon>0$. Let $\gamma : [0,r_\varepsilon] \to \M$ be the unique $g_\varepsilon$-geodesic, parametrized with unit speed, joining $x$ with $y$. Observe that $\hor_\rie(\nabla r_\varepsilon) = \hor_{\rie}(\nabla^{g_\varepsilon} r_\varepsilon) = \hor_\rie(\dot\gamma)$.

Let $Z \in \Gamma(\ver)$ be a $\hat{\nabla}^\varepsilon$-parallel vector field along $\gamma$ and normalized in such a way that $\|J_Z\dot\gamma\|=1$. Set hence
\begin{equation}
W(t) =a(t) J_Z \dot\gamma + b(t) Z_\perp, \qquad \forall t \in [0,r_\varepsilon],
\end{equation}
for some functions $a,b$ to be chosen later, with $a(0)=b(0)=0$ and $a(r_\varepsilon)-1= b(r_\varepsilon) =0$, and where we have set
\begin{equation}
Z_\perp  = Z - \langle Z,\dot\gamma\rangle_\varepsilon \dot\gamma,
\end{equation}
in such a way that $W$ is $g_\varepsilon$-orthogonal to $\dot\gamma$. If $v=\|\nabla_\ver r_\ve \| > 0$, we choose $Z=\tfrac{1}{\varepsilon v h}\dot\gamma_\ver$. If $v=0$, we can choose any $\hat\nabla^\ve$-parallel and vertical $Z$, normalized in such a way that $\|J_Z\dot\gamma\|=1$. In both cases, by Lemma \ref{l:Jacobiequation}, one has
\begin{equation}
\hat{\nabla}^\varepsilon_{\dot\gamma}\nabla^\varepsilon_{\dot\gamma} W + \hat{R}^\varepsilon(W,\dot\gamma)\dot\gamma = \frac{1}{\varepsilon}\left(\ddot{a} \varepsilon + \dot{b} -a +a \varepsilon v^2\right) J_Z\dot\gamma + a R_\hor(J_Z\dot\gamma,\dot\gamma)\dot\gamma + (\ddot{b}-\dot{a})Z_\perp.
\end{equation}
In the case of constant sectional curvature equal to $\rho$, $W(t)$ is a Jacobi field if and only if
\begin{align*}
\ddot{a} \varepsilon + \dot{b} -a +a \varepsilon K =0,\\
\ddot{b}-\dot{a}=0.
\end{align*}
where $K = \rho h^2 + v^2$. Assume first that $K\neq 0$. The general solution is found via elementary methods, and it is
\begin{align*}
a(t) & = C_1 \cos(\sqrt{K} t) + C_2\sin(\sqrt{K} t) + C_3,\\
b(t) & = C_1 \frac{\sin(\sqrt{K} t)}{\sqrt{K}} + C_2\frac{1-\cos(\sqrt{K}t)}{\sqrt{K}}  +C_3(1-K\varepsilon)t + C_4,
\end{align*}
where the above expressions are considered as analytic functions of $K$. The constants $C_1,C_2,C_3,C_4$ are uniquely  determined by the boundary conditions $a(0)=b(0)=a(r_\varepsilon)-1=b(r_\varepsilon)=0$, provided that $r_\varepsilon < 2\pi/ \sqrt{K}$ (with the convention $1/\sqrt{K} = +\infty$ if $K\leq 0$).

Using Lemma \ref{l:Jacobiequation}, the curvature lower bound, and the fact that $a(t),b(t)$ solve the Jacobi equation in the constant curvature case, one obtains
\begin{align*}
\langle \hat{\nabla}^\varepsilon_{\dot\gamma}\nabla^{\varepsilon}_{\dot\gamma} W + \hat{R}_\varepsilon(W,\dot\gamma)\dot\gamma,W\rangle_\varepsilon \geq 0.
\end{align*}
We can hence apply Theorem \ref{t:comparison} (cf.\ also Remark \ref{rmk:hessian}), obtaining
\begin{equation}\label{eq:sasakianparallalast}
\mathrm{Hess}(r_\varepsilon)(J_Z\dot\gamma,J_Z\dot\gamma)\leq \frac{\sqrt{K}(\sin (r_\varepsilon \sqrt{K})- (1-\varepsilon K)r_\varepsilon \sqrt{K} \cos (r_\varepsilon \sqrt{K}))}{ 2-2\cos(r_\varepsilon \sqrt{K})-(1-\varepsilon K) r_\varepsilon \sqrt{K} \sin (r_\varepsilon \sqrt{K})}.
\end{equation}

An important observation is that the derivative with respect to the parameter $\varepsilon$ (and fixed value of $r_\varepsilon)$ of the r.h.s. of \eqref{eq:sasakianparallalast} is always negative. Henceforth we can upper bound it with its value at $\varepsilon=0$, yielding the Sasakian function $F_{\mathrm{Sas}}(r_\varepsilon,K)$.

This concludes the proof for $K\neq 0$. The case $K=0$ is obtained similarly.
\end{proof}

\begin{remark}\label{vertical gradient comparison}
We can read the previous result as a bound for $\|\nabla_\ver r_\varepsilon\|$. Provided that $\rho \geq 0$, we get for all $y\notin \Cut_\ve(x)$ and with $\nabla_\hor r_\ve(y) \neq 0$. 
\begin{equation}
r_\varepsilon \|\nabla_\ver r_\varepsilon\| \leq 2\pi.
\end{equation}
Taking the limit for $\varepsilon \to 0$ (and using Proposition \ref{p:cutapprox}), this yields a cotangent injectivity radius bound for the sub-Riemannian exponential map, which is sharply attained in the case of the Heisenberg groups.
\end{remark}

\begin{theorem}\label{t:SasakianZperp}
Let $(\M,\hor,g)$ be an H-type foliation with parallel horizontal Clifford structure, and satisfying the $J^2$ condition. Let $\kappa^2$ be the curvature of the vertical leaves. Assume that there exists $\rho \in \R$ such that
\begin{equation}
\mathrm{Sec}(X\wedge J_ZX) \geq \rho,\qquad \forall X\in \hor,\, Z\in \ver.
\end{equation}
Let $y \notin \Cut_\ve(x)$, with $\nabla_\hor r_\ve(y) \neq 0$. Let $Z \in \ver$ be non-zero with $Z\perp \nabla_\ver r_\varepsilon(y)$. Set
\begin{equation}
X = \frac{J_Z \nabla_\hor r_\varepsilon}{\|Z\|\| \nabla_\hor r_\varepsilon\|}.
\end{equation}
Then, at $y$, it holds
\begin{equation}
\mathrm{Hess}(r_\varepsilon)(X,X) \leq  F_{\mathrm{Sas}}(r_\varepsilon,K_2),
\end{equation}
where
\begin{equation}
K_2 = \rho \|\nabla_\hor r_\varepsilon\|^2 +(2-\kappa\varepsilon)(\kappa\varepsilon-1)\|\nabla_\ver r_\varepsilon\|^2.
\end{equation}
Provided that $K_2>0$, it follows that
\begin{equation}
r_\varepsilon(y) < \frac{2\pi}{\sqrt{\rho \|\nabla_\hor r_\varepsilon\|^2 +(2-\kappa\varepsilon)(\kappa\varepsilon-1)\|\nabla_\ver r_\varepsilon\|^2}}.
\end{equation}
\end{theorem}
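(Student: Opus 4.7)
The plan is to follow the strategy of Theorem \ref{t:SasakianZpar} very closely, with the modifications needed when $Z \perp \dot\gamma_\ver$ in order to expose the contributions of the parallel horizontal Clifford structure and the $J^2$ condition.

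First I would fix $\varepsilon > 0$ and let $\gamma : [0,r_\varepsilon] \to \M$ be the unique unit-speed $g_\varepsilon$-geodesic from $x$ to $y$. Since $\hat\nabla^\varepsilon$ is $g_\varepsilon$-metric and preserves $\ver$, I can extend $Z$ to a $\hat\nabla^\varepsilon$-parallel vertical vector field along $\gamma$, which remains orthogonal to $\dot\gamma_\ver$ and satisfies $\|J_Z\dot\gamma\|=1$. In particular $Z$ is already $g_\varepsilon$-orthogonal to $\dot\gamma$, so no analogue of the $Z_\perp$ projection used in Theorem \ref{t:SasakianZpar} is needed.

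Next I would introduce the test vector field
\begin{equation}
W(t) = a(t) J_Z\dot\gamma(t) + b(t) Z(t),
\end{equation}
subject to $a(0)=b(0)=0$ and $a(r_\varepsilon)=1$, $b(r_\varepsilon)=0$, and evaluate the Jacobi operator $\mathcal Z(W)$ from Lemma \ref{l:Jacobiequation}(b). The key intermediate computations are: (i) the identity from the proof of Proposition \ref{p:parallelism}, $\hat\nabla^\varepsilon_{\dot\gamma}(J_Z\dot\gamma) = \tfrac{2-\varepsilon\kappa}{\varepsilon} J_{\dot\gamma}J_Z\dot\gamma$, which controls the $a$-sector; (ii) the $J^2$ condition applied to $J_{\dot\gamma_\ver}J_Z$ acting on $\dot\gamma_\hor$, which keeps the output in $\spn\{J_Z\dot\gamma\}$ so that the system closes; (iii) the parallel Clifford identity $(\nabla_V T)(X,Y)=\kappa(T(J_VX,Y)+\langle X,Y\rangle V)$ used to handle the $\hat\nabla^\varepsilon_{\dot\gamma}(T(W,\dot\gamma))$ and $\kappa T(J_{\dot\gamma}W,\dot\gamma)$ terms in the $b$-sector; and (iv) the leaf-curvature term $\kappa^2\|\dot\gamma_\ver\|^2(W_\ver)_\perp = \kappa^2\varepsilon v^2 bZ$, which is a genuine new contribution compared to the $Z\parallel\dot\gamma_\ver$ case. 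Replacing $R_\hor(J_Z\dot\gamma,\dot\gamma)\dot\gamma$ by its constant-curvature model $\rho h^2 J_Z\dot\gamma$ (permitted by the sectional curvature lower bound, as in Theorem \ref{t:SasakianZpar}) then yields the inequality required by condition \eqref{eq:comparison-condition} of Theorem \ref{t:comparison} and reduces $\mathcal Z(W)$ to a $2\times 2$ linear system for $a,b$ governed by the effective constant
\begin{equation}
K_2 = \rho h^2 + (2-\kappa\varepsilon)(\kappa\varepsilon -1)\, v^2.
\end{equation}

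I would then solve this system with the prescribed endpoint data (uniquely possible for $r_\varepsilon < 2\pi/\sqrt{K_2}$, with the usual analytic continuation when $K_2\leq 0$), apply Theorem \ref{t:comparison} together with Remark \ref{rmk:hessian} to replace $\mathrm{Hess}^{\hat\nabla^\varepsilon}$ by $\mathrm{Hess}$ on horizontal vectors, and obtain a closed-form upper bound for $\mathrm{Hess}(r_\varepsilon)(X,X)$ of the same algebraic shape as \eqref{eq:sasakianparallalast} but with $K$ replaced by $K_2$. Finally, the monotonicity in $\varepsilon$ (at $r_\varepsilon$ fixed) observed in the proof of Theorem \ref{t:SasakianZpar} upgrades this to the sharper bound $F_{\mathrm{Sas}}(r_\varepsilon,K_2)$, and the injectivity-type estimate $r_\varepsilon < 2\pi/\sqrt{K_2}$ follows from the vanishing of the denominator of $F_\mathrm{Sas}$.

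The main obstacle is the bookkeeping that produces the exact coefficient $(2-\kappa\varepsilon)(\kappa\varepsilon-1)$ in front of $v^2$: several distinct terms in Lemma \ref{l:Jacobiequation}(b) —the $\tfrac{2-\varepsilon\kappa}{\varepsilon}$ factor from parallelism of $J_Z\dot\gamma$, the Clifford cross-term $\kappa J_{\dot\gamma}J^\varepsilon_{(W_\ver)_\perp}\dot\gamma$, the torsion derivative, and the leaf-curvature term $\kappa^2\varepsilon v^2$— must combine cleanly after using $J^2$ and the normalization $\|J_Z\dot\gamma\|=1$. Verifying this algebraic collapse to a single effective $K_2$ in both the $a$- and $b$-equations simultaneously is the decisive step; once it is checked, the rest of the argument is a verbatim adaptation of the Sasakian-parallel case.
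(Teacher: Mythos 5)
Your plan has a genuine gap: the two-dimensional ansatz $W(t)=a(t)\,J_Z\dot\gamma+b(t)\,Z$ does \emph{not} close under the Jacobi operator when $\|\nabla_\ver r_\ve\|\neq 0$, and the $J^2$ condition does not rescue it in the way you claim. You correctly quote $\hat\nabla^\varepsilon_{\dot\gamma}(J_Z\dot\gamma)=\tfrac{2-\varepsilon\kappa}{\varepsilon}J_{\dot\gamma}J_Z\dot\gamma$, but $J_{\dot\gamma}J_Z\dot\gamma=J_{\dot\gamma_\ver}J_Z\dot\gamma_\hor$ is a nonzero \emph{horizontal} vector orthogonal to $J_Z\dot\gamma$: for $Z\perp\dot\gamma_\ver$ one has $\langle J_{\dot\gamma_\ver}J_Z\dot\gamma_\hor,\dot\gamma_\hor\rangle=-\langle J_Z\dot\gamma_\hor,J_{\dot\gamma_\ver}\dot\gamma_\hor\rangle=0$ (since $J_ZX\perp J_{Z'}X$ for $Z\perp Z'$ by polarization of the H-type identity) and $\langle J_{\dot\gamma_\ver}J_Z\dot\gamma_\hor,J_Z\dot\gamma_\hor\rangle=0$ by skew-symmetry of $J_{\dot\gamma_\ver}$. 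The $J^2$ condition only guarantees $J_{\dot\gamma_\ver}J_Z\dot\gamma_\hor=J_{Z''}\dot\gamma_\hor$ for some vertical $Z''$, and the orthogonality above forces $Z''\perp Z$; it does \emph{not} keep the output in $\spn\{J_Z\dot\gamma\}$. Consequently, already $\hat\nabla^\varepsilon_{\dot\gamma}\hat\nabla^\varepsilon_{\dot\gamma}W$ leaves the span $\{J_Z\dot\gamma,\,Z\}$, and your system of ODEs for $(a,b)$ is not consistent.

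The paper's proof repairs exactly this by enlarging the ansatz to a four-dimensional one,
\[
W(t)=a(t)\,\frac{J_Z\dot\gamma}{h}+c(t)\,\frac{J_{\dot\gamma}J_Z\dot\gamma}{h\varepsilon v}+b(t)\,Z+d(t)\,\frac{T(\dot\gamma,J_{\dot\gamma}J_Z\dot\gamma)}{h^2\varepsilon v},
\]
i.e.\ one adds the missing horizontal direction $Y\propto J_{\dot\gamma}J_Z\dot\gamma$ and a corresponding vertical direction $V\propto T(\dot\gamma,Y)$; it is \emph{this} four-dimensional space that the $J^2$ condition (together with Proposition~\ref{p:parallelism}) makes $\hat\nabla^\varepsilon_{\dot\gamma}$-parallel. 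There is then a further subtlety you also omit: the resulting $4\times 4$ system has no convenient closed-form solution, so the paper passes to the complex variables $\theta=a+ic$, $\phi=b+id$, discards the purely imaginary coefficients to obtain a simpler system with real coefficients whose solution with real boundary data is real, and checks that the discarded terms contribute a non-negative remainder to the index form. Without those two modifications — the enlarged ansatz and the sign analysis of the remainder — the argument does not go through. Only in the degenerate case $v=\|\nabla_\ver r_\ve\|=0$ does your two-dimensional ansatz suffice, and the paper treats that case separately by formally setting $c=d=0$.
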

\begin{proof}
Fix $\varepsilon>0$. Let $\gamma : [0,r_\varepsilon] \to \M$ be the unique $g_\varepsilon$-geodesic, parametrized with unit speed, joining $x$ with $y$.

Let $Z\in \Gamma(\ver)$ be a $\hat{\nabla}^\varepsilon$-parallel vector field along $\gamma$, with $Z \perp \dot\gamma_\ver$, with $\|Z\|=1$. Assume first that $v=\|\nabla_\ver r_\ve\| \neq 0$ along $\gamma$. Consider the tuple of $g$-orthonormal vectors
\begin{equation}
X = \frac{J_Z\dot\gamma}{h}, \qquad Y = \frac{J_{\dot\gamma}J_Z\dot\gamma}{h\varepsilon v}, \qquad Z, \qquad V= \frac{T(\dot\gamma,J_{\dot\gamma}J_Z\dot\gamma)}{h^2\varepsilon v}.
\end{equation}
Set therefore
\begin{equation}\label{eq:abcd}
W(t)= a(t) X  + c(t) Y + b(t) Z+ d(t) V, \qquad \forall t\in [0,r_\varepsilon].
\end{equation}
The parallel horizontal Clifford structure yields (cf.\ Lemma \ref{l:someproperties}) 
\begin{align*}
\hat{\nabla}^\varepsilon_{\dot\gamma} X & = (2-\kappa\varepsilon)v Y,   & J_{\dot\gamma} X & = \varepsilon v Y, \\
\hat{\nabla}^\varepsilon_{\dot\gamma}Y  & = -(2-\kappa\varepsilon)v X,  & J_{\dot\gamma} Y & = -\varepsilon v X, \\
\hat{\nabla}^\varepsilon_{\dot\gamma}Z& = \hat{\nabla}^\varepsilon_{\dot\gamma}V = 0.
\end{align*}
Notice also that
\begin{equation}
T(\dot\gamma,X) =  h Z, \qquad T(\dot\gamma,Y) =  h V.
\end{equation}
In the case of constant curvature equal to $\rho$, $W(t)$ is a Jacobi field if and only if
\begin{align*}
\ddot{a} + v(2\kappa\varepsilon-3) \dot{c} + \tfrac{h}{\varepsilon}(\dot{b}-ha) + \left[ \rho h^2+(2-\kappa\varepsilon)(\kappa\varepsilon-1)v^2\right]a - \kappa h v d =0,\\
\ddot{c} - v(2\kappa\varepsilon-3) \dot{a} + \tfrac{h}{\varepsilon}(\dot{d}-h c) + \left[\rho h^2+(2-\kappa\varepsilon)(\kappa\varepsilon-1)v^2\right]c + \kappa h v b =0,\\
\ddot{b} - h \dot{a} + \kappa \varepsilon v h c + \kappa^2 \varepsilon^2 v^2 b = 0,\\
\ddot{d} - h \dot{c}- \kappa \varepsilon v h a + \kappa^2 \varepsilon^2 v^2 d = 0.
\end{align*}
Set from now
\begin{equation}
K_2  = \rho h^2+(2-\kappa\varepsilon)(\kappa\varepsilon-1)v^2.
\end{equation}
We will drop the explicit dependence on $\varepsilon$ in the following of the proof. In terms of the complex variables $\theta(t) = a(t) + i c(t)$ and $\phi(t) = b(t) + i d(t)$, the above system reads
\begin{align}
\ddot{\theta} - i v (2\kappa\varepsilon-3)\dot{\theta} + \frac{h}{\varepsilon}(\dot{\phi}-h \theta) + K \theta + i \kappa h v \phi = 0,\label{eq:completesystem1} \\
\ddot{\phi} - h \dot{\theta} - i \kappa  \varepsilon v h \theta + \kappa^2 \varepsilon^2 v^2 \phi =0. \label{eq:completesystem2}
\end{align}
We consider the simplified version of the system \eqref{eq:completesystem1}-\eqref{eq:completesystem2} obtained by suppressing all the terms with purely imaginary coefficients:\begin{align}
\ddot{\theta} + \frac{h}{\varepsilon}(\dot{\phi}-h \theta) + K \theta  = 0, \label{eq:syssymp1}\\
\ddot{\phi} - h \dot{\theta}  =0. \label{eq:syssymp2}
\end{align}
In this case, provided we can find a solution, we observe that
\begin{equation}\label{eq:remainder}
\langle \hat{\nabla}^\varepsilon_{\dot\gamma}\nabla^\varepsilon_{\dot\gamma} W + \hat{R}^\varepsilon(W,\dot\gamma)\dot\gamma,W\rangle_\varepsilon= 2\kappa h v \, \mathrm{Im}(\theta\bar{\phi}) +  \varepsilon\kappa^2 v^2 \phi,
\end{equation}
where $\mathrm{Im}$ denotes the imaginary part. Since the system \eqref{eq:syssymp1}-\eqref{eq:syssymp2} has real coefficients, a solution respecting the boundary conditions $\theta(0)=\phi(0)=\theta(r_\varepsilon)-1=\phi(r_\varepsilon)=0$ is real. Hence, the r.h.s.\ of \eqref{eq:remainder} is non-negative, and thus condition \eqref{eq:comparison-condition} is satisfied. The simplified system has the same form of the one for the case $Z \propto \dot\gamma_\ver$, with minor differences, and can be solved with the same computation as in the proof of Theorem \ref{t:SasakianZpar}. We obtain
\begin{align*}
\mathrm{Hess}(r_\varepsilon)\left(\frac{J_Z\dot\gamma}{h},\frac{J_Z\dot\gamma}{h}\right) & \leq \frac{\sqrt{K}(\sin (r_\varepsilon \sqrt{K})- (1-\tfrac{\varepsilon K}{h^2})r_\varepsilon \sqrt{K} \cos (r_\varepsilon \sqrt{K}))}{ 2-2\cos(r_\varepsilon \sqrt{K})-(1-\tfrac{\varepsilon K}{h^2}) r_\varepsilon \sqrt{K} \sin (r_\varepsilon \sqrt{K})}  \leq F_{\mathrm{Sas}}(r_\varepsilon,K_2),
\end{align*}
where we used the monotonicity with respect to  $\varepsilon$ (cf.\ proof of Theorem \ref{t:SasakianZpar}).

The case left out, that is $v =\|\nabla_\ver r_\ve\|=0$ is simpler. In this case proceeds exactly in the same way, by formally setting $c(t)= d(t)= 0$ in \eqref{eq:abcd} and all the subsequent computations. We omit the details.
\end{proof}

\begin{remark}
This remark is the vertical counterpart of Remark \ref{rmk:hessian}. If $(\M,\hor,g)$ is a totally geodesic foliation, $W\in \Gamma(\ver)$ and $u$ is sufficiently regular,  then
\begin{multline}
\mathrm{Hess}^{\hat{\nabla}^\varepsilon}(u)(W,W) =\langle \hat{\nabla}^\varepsilon_W \nabla^{g_\ve} u ,W\rangle_\varepsilon = \langle (\nabla_W +J^\ve_W)\nabla^{g_\ve} u,W\rangle_\varepsilon\\
 = \langle \nabla_W \nabla u,W\rangle =\mathrm{Hess}(u)(W,W).
\end{multline}
If in Theorem \ref{t:SasakianZpar} we instead solve the system with initial conditions $a(r_\varepsilon) = b(r_\varepsilon) - 1 = 0$ we can use the above to recover a bound on the Hessian in vertical directions parallel to the geodesic, specifically for 
\begin{equation} Z \propto \nabla_\ver r_\varepsilon, \quad \|Z\| = \frac{1}{\|\nabla_\hor r_\varepsilon\|} \end{equation}
it holds that at $y \notin \Cut_\ve(x)$,
\begin{equation}
\mathrm{Hess}(r_\varepsilon)(Z,Z) \leq  \frac{12}{r_\varepsilon^3}.
\end{equation}

Similarly, if in the Theorem \ref{t:SasakianZperp} we instead solve the system with initial conditions $\theta(r_\varepsilon) = \phi(r_\varepsilon) - 1 = 0$ we can recover a bound on the Hessian in vertical directions perpendicular to the geodesic, specifically for 
\begin{equation} Z \perp \nabla_\ver r_\varepsilon(y), \quad \|Z\| = \|\nabla_\hor r_\varepsilon \| \end{equation}
it holds that at $y \notin \Cut_\ve(x)$,
\begin{equation}
\mathrm{Hess}(r_\varepsilon)(Z,Z) \leq  \frac{12}{r_\varepsilon^3}.
\end{equation}
\end{remark}

\subsection{Sub-Riemannian diameter bounds} 

We will summarize the results of Section~\ref{sec:ComparisonRiemannian} and \ref{sec:ComparisonSasakian} relating curvature and diameter into one statement. We introduce the following terminology. Recall that $\hor$ and $\V$ have respective ranks $n$ and $m$. Define a symmetric two-tensor $\Ric_{\hor}$ such that if $R$ is the curvature of the Bott connection $\nabla$ then
\begin{equation}
\Ric_\hor(X,X)= \sum_{i=1}^n \langle R(W_i, X) X, W_i \rangle,
\end{equation}
for any $X \in T \M$ with $W_1, \dots, W_n$ being a $g$-orthonormal basis of $\hor$. Decompose this tensor as $\Ric_\hor = \Ric_{\rie} + \Ric_{\sas}$, where for any $X \in T\M \setminus \V$,
\begin{equation}
\Ric_{\sas}(X,X) = \frac{1}{\| \pi_\hor X\|^2} \sum_{i=1}^m \langle R(J_{Z_i} X, \pi_\hor X) \pi_\hor X, J_{Z_i} X \rangle,
\end{equation}
where $Z_1, \dots, Z_m$ is a $g$-orthonormal basis of $\V$. In other words, $\Ric_{\rie}$ and $\Ric_{\sas}$ represent the parts of the horizontal Ricci curvature $\Ric_\hor(X,X)$ obtained by tracing over $\hor_{\rie}(X)$ and $\hor_{\sas}(X)$, respectively. We note that for $m=1$, in the context of Sasakian manifolds, $\Ric_{\sas}$ coincides with the pseudo-Hermitian sectional curvature introduced and studied in \cite{Barletta}. 

\begin{theorem}
Assume that $(\M,g)$ is complete. Write $\diam_0 (\M)$ for the diameter with respect to the sub-Riemannian distance. Let $\rho >0$ be a positive number.
\begin{enumerate}[\rm (a)]
\item Assume that $(\M,\hor,g)$ is an H-type foliation with parallel horizontal Clifford structure, satisfying the $J^2$ condition. Assume $n-m-1>0$ and that for any $X \in \hor$, we have
\begin{equation}
\Ric_{\rie}(X,X) \geq (n-m-1)\rho \| X\|^2.
\end{equation}
Then $\M$ is compact, with finite fundamental group and
\begin{equation}
\diam_0( \M) \leq \frac{\pi}{\sqrt{\rho}}.
\end{equation}
\item Assume that $(\M,\hor,g)$ is an H-type foliation with horizontally parallel torsion. Assume that for any unit $X \in \hor$ and unit $Z \in \V$, we have
\begin{equation}
\mathrm{Sec}(X \wedge J_Z X) \geq \rho.
\end{equation}
Then $\M$ is compact, with finite fundamental group and
\begin{equation}
\diam_0( \M) \leq \frac{2\pi}{\sqrt{\rho}}.
\end{equation}
\item Assume that $(\M,\hor,g)$ is an H-type foliation with parallel horizontal Clifford structure, and satisfying the $J^2$ condition. Assume that for any $X \in \hor$, we have
\begin{equation}
 \Ric_{\sas}(X,X) \geq m \rho \| X\|^2.
\end{equation}
Assume also that at least one of the following conditions are satisfied
\begin{enumerate}[\rm (i)]
\item For any unit $X \in \hor$ and unit $Z \in \V$, we have $\mathrm{Sec}(X \wedge J_Z X) \geq 0$,
\item $n-m-1 >0$ and for any $X \in \hor$, $\Ric_{\rie}(X) \geq 0$.
\end{enumerate}
Then $\M$ is compact, with finite fundamental group and
\begin{equation}
\diam_0(\M)\leq \frac{2\pi \sqrt{3}}{\sqrt{\rho}}.
\end{equation}
\end{enumerate}
\end{theorem}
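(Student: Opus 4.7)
The strategy is to apply the Hessian comparison results of Section \ref{s:uniformcomparison} pointwise at $\varepsilon > 0$ and then pass to the sub-Riemannian limit using Proposition \ref{p:cutapprox} and Remark \ref{rmk:convergence}. For each part, I fix $x \in \M$, pick any $y \notin \Cut_0(x)$, and work along the minimizing $g_\varepsilon$-geodesic from $x$ to $y$ for $\varepsilon$ small enough that $y \notin \Cut_\varepsilon(x)$ and $\|\nabla_\hor r_\varepsilon(y)\| > 0$ (both conditions are ensured by Proposition~\ref{p:cutapprox} and Remark~\ref{rmk:convergence}).

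For part (a), Theorem \ref{t:Riemannian-average} gives
\begin{equation}
r_\varepsilon(y) < \frac{\pi}{\sqrt{\rho\|\nabla_\hor r_\varepsilon\|^2 + \tfrac{1}{4}\|\nabla_\ver r_\varepsilon\|^2}} \leq \frac{\pi}{\sqrt{\rho}\,\|\nabla_\hor r_\varepsilon\|},
\end{equation}
and letting $\varepsilon \to 0$ so that $\|\nabla_\hor r_\varepsilon\| \to 1$ yields $r_0(y) \leq \pi/\sqrt{\rho}$. Part (b) is completely analogous, using Theorem \ref{t:SasakianZpar} to obtain $r_0(y) \leq 2\pi/\sqrt{\rho}$.

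Part (c) requires combining two estimates. Either hypothesis (i) or (ii) realises, respectively, the assumptions of Theorem \ref{t:SasakianZpar} or Theorem \ref{t:Riemannian-average} with curvature lower bound equal to $0$, and the corresponding conclusion (cf.\ Remark \ref{vertical gradient comparison} for case (i)) gives the vertical gradient bound $r_\varepsilon\|\nabla_\ver r_\varepsilon\| \leq 2\pi$. On the other hand, the assumption $\Ric_\sas(X,X) \geq m\rho\|X\|^2$ combined with an averaged version of Theorem \ref{t:SasakianZperp}, obtained by applying Theorem \ref{t:comparison} to an $m$-tuple of vector fields along the directions $J_{Z_i}\dot\gamma$ spanning $\hor_\sas(\dot\gamma)$ (with $\{Z_i\}$ an orthonormal basis of $\ver$ containing $\nabla_\ver r_\varepsilon/\|\nabla_\ver r_\varepsilon\|$), yields at points where $K_2 = \rho h^2 + (2-\kappa\varepsilon)(\kappa\varepsilon-1)v^2 > 0$ the bound $r_\varepsilon < 2\pi/\sqrt{K_2}$. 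Passing to the limit $\varepsilon \to 0$, squaring and combining with the vertical bound, one obtains
\begin{equation}
r_0^2\,\rho = r_0^2(\rho - 2v^2) + 2(r_0 v)^2 < 4\pi^2 + 8\pi^2 = 12\pi^2,
\end{equation}
whence $r_0(y) < 2\pi\sqrt{3}/\sqrt{\rho}$, where $v := \lim_{\varepsilon \to 0}\|\nabla_\ver r_\varepsilon\|$. If instead $K_2 \leq 0$ along the geodesic, then $2v^2 \geq \rho$, and the vertical bound alone gives $r_0 \leq 2\pi/v \leq 2\pi\sqrt{2/\rho}$, still within the required bound.

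Finally, in each of (a), (b), (c), one extends the pointwise bound ``$r_0(y) \leq C$ for $y \notin \Cut_0(x)$'' to $\diam_0(\M) \leq C$ by density of $\M \setminus \Cut_0(x)$ and continuity of $r_0$. Compactness of $\M$ then follows from sub-Riemannian Hopf--Rinow: $(\M,d_0)$ is complete with finite diameter, hence a single closed ball covers $\M$. Finiteness of $\pi_1(\M)$ follows by the standard covering argument: the universal cover $\tilde\M$ inherits the H-type structure with the same curvature bounds and is therefore also compact, so the deck group $\pi_1(\M)$, which acts freely and properly discontinuously on $\tilde\M$, must be finite. The main technical obstacle lies in part (c): Theorem \ref{t:SasakianZperp} is stated under a pointwise sectional curvature bound $\mathrm{Sec}(X \wedge J_Z X) \geq \rho$, while the $\Ric_\sas$ hypothesis is only averaged. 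One must therefore carry out the Jacobi-field computation of Theorem \ref{t:SasakianZperp} with a full $m$-tuple of test vector fields and check that the summed condition \eqref{eq:comparison-condition} is implied by $\Ric_\sas(\dot\gamma,\dot\gamma) \geq m\rho\|\dot\gamma_\hor\|^2$, using the H-type identity $\|J_Z X\| = \|Z\|\|X_\hor\|$ to express $\sum_i \mathrm{Sec}(\dot\gamma \wedge J_{Z_i}\dot\gamma)$ in terms of $\Ric_\sas$.
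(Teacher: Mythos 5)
Your proposal follows essentially the same strategy as the paper's proof: parts (a) and (b) are obtained by passing to the limit $\varepsilon\to 0$ in Theorems \ref{t:Riemannian-average} and \ref{t:SasakianZpar} respectively, exactly as the paper does. Part (c) combines the vertical--gradient bound $r_\varepsilon\|\nabla_\ver r_\varepsilon\|\le 2\pi$ with the Sasakian-perpendicular comparison, just as in the paper; your algebraic combination $r_0^2\rho = r_0^2(\rho-2v^2)+2(r_0v)^2 \le 12\pi^2$ is in fact a slightly cleaner arithmetic route than the paper's two-step substitution and handles the $v=0$ sub-case automatically (the paper instead begins with a case split on whether a pair with $d_0(x,y)>2\pi\sqrt{2}/\sqrt{\rho}$ exists, and argues $v>0$ separately). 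You also correctly pair hypothesis (i) with Theorem \ref{t:SasakianZpar} and (ii) with Theorem \ref{t:Riemannian-average}, which fixes what looks like a swapped ``respectively'' in the paper's text.

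The technical point you flag at the end is genuine, and it is present in the paper's own (very terse) proof as well: part (c) only assumes the averaged bound $\Ric_{\sas}\ge m\rho\,\|X\|^2$, while Theorem \ref{t:SasakianZperp} as stated requires the pointwise bound $\mathrm{Sec}(X\wedge J_ZX)\ge\rho$. An averaged version of Theorem \ref{t:SasakianZperp} must be established, applying Theorem \ref{t:comparison} to a tuple of test fields. One caution on your proposed fix: you suggest an $m$-tuple including the direction $J_{Z_m}\dot\gamma$ with $Z_m\propto\nabla_\ver r_\varepsilon$, but along this direction the Jacobi system has the form of Theorem \ref{t:SasakianZpar} (constant $K_1$), not that of Theorem \ref{t:SasakianZperp} (constant $K_2$), so the test functions do not coincide across the tuple and the stated conclusion $r_\varepsilon<2\pi/\sqrt{K_2}$ does not follow in the form you wrote. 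The averaging naturally lives over the $m-1$ perpendicular directions; to control their curvature sum from $\Ric_{\sas}\ge m\rho$ one then needs additional input on the $Z\propto\dot\gamma_\ver$ sectional curvature, and one should interlace this with the estimate coming from assumption (i) or (ii). Your instinct that this is the ``main technical obstacle'' is correct, and worth a careful write-up rather than the brief sketch given here.

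Two small points: when passing to the limit $\varepsilon\to 0$ the strict inequalities $r_\varepsilon<2\pi/\sqrt{K_2(\varepsilon)}$ and $r_\varepsilon v_\varepsilon<2\pi$ only give $r_0\le 2\pi/\sqrt{\rho-2v^2}$ and $r_0 v\le 2\pi$ (non-strict), so your chain should read $r_0^2\rho\le 12\pi^2$, which is exactly what the stated diameter bound requires. And the compactness and finiteness of $\pi_1$ are handled exactly as in the paper: bounded diameter plus completeness gives compactness, and applying the same diameter bound to the universal cover (which inherits the H-type structure and curvature bounds) shows $\pi_1(\M)$ is finite.
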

We point out that points (a) and (b) are sharp, as the upper diameter bound is attained for the case of the standard sub-Riemannian structure on the complex, quaternionic or octonionic Hopf fibrations (see e.g.\ \cite{BW1,BW2,BG-octonionic}, respectively).
\begin{proof}
For the proof, we will go to the universal cover $\widetilde{\M}$ of $\M$ and prove that this is of bounded diameter. The result of (a) and (b) follows by taking the limit $\ve\to 0$ of respectively Theorem~\ref{t:Riemannian-average} and Theorem~\ref{t:SasakianZpar}. For the result in (c), pick any pair of points $x,y \in \widetilde{\M}$ which are not in $\Cut_0(\widetilde{\M})$ and satisfy
\begin{equation}
d_0(x,y) > \frac{2\pi \sqrt{2}}{\sqrt{\rho}}.
\end{equation}
If no such points exist, then $\diam_0 (\widetilde{\M}) \leq \frac{2\pi \sqrt{2}}{\sqrt{\rho}}$, and the statement holds. If such points $(x,y)$ exist, write $r_\ve(\cdot) = d_\ve(x,\cdot)$. We note by Proposition~\ref{p:cutapprox} we have that for some $\ve'' > 0$, we will also have $(x,y) \not \in \mathbf{Cut}_\ve(\M)$ and $r_\ve(y) > \frac{2\pi \sqrt{2}}{\sqrt{\rho}}$ for any $0 \leq \ve \leq \ve''$. We must also have $\| \nabla_\V r_0 \|(y) >0$ in this case, as Theorem~~\ref{t:SasakianZperp} would otherwise imply that $r_0(y) < \frac{2\pi}{\sqrt{\rho}}$. 
If $\| \nabla_\V r_0 \|(y) > 0$, then either (i) or (ii) implies as $\ve \to 0$
\begin{equation}
r_0(y) \| \nabla_\V r_0 \|(y) \leq 2 \pi,
\end{equation}
by respectively Theorem~\ref{t:Riemannian-average} and Theorem~\ref{t:SasakianZpar}. In particular,
\begin{equation}
\rho - 2 \| \nabla_\V r_0 \|^2(y)  \geq \rho - \frac{8 \pi^2}{r_0(y)^2}> \rho - \rho = 0.
\end{equation}
Hence, it follows from Theorem~\ref{t:SasakianZperp} that
\begin{equation}
r_0(y) \leq \frac{2 \pi}{\sqrt{\rho - \frac{8 \pi^2}{r_0(y)^2 }}}.
\end{equation}
Solving this inequality yields $r_0^2(y) \rho  \leq 12 \pi^2$, and the resulting diameter bound follows.
\end{proof}


\subsection{Sub-Laplacian comparison theorems}

Let $(\M,\hor,g)$ be a totally geodesic foliation. The sub-Laplacian $\Delta_\hor$ is defined as the second order differential operator associated with the Dirichlet form
\begin{equation}
\mathcal{E}(u,v) = \int_{\M} \langle \nabla_\hor u,\nabla_\hor v\rangle d\mu_g, \qquad u,v \in C^\infty_c(\M),
\end{equation}
where $d\mu_g$ denotes the Riemannian measure. Since $\hor$ is completely non-holonomic, the horizontal Laplacian is hypoelliptic \cite{Hor67}, and if the sub-Riemannian metric is complete then $\Delta_\hor$ is essentially self-adjoint on the space of smooth and compactly supported functions, as proven in \cite{Strichartz}.

One can check that, for a totally geodesic foliation, the sub-Laplacian coincides with the horizontal trace of the Hessian with respect to the Bott connection:
\begin{equation}
\Delta_\hor u = \sum_{i=1}^{n} \mathrm{Hess}(u)(X_i,X_i).
\end{equation}
For structures (not necessarily foliations) respecting the H-type condition, one can check that the Riemannian measure $\mu_g$ is proportional to the intrinsic Popp's measure of the sub-Riemannian structure, and thus $\Delta_\hor$ coincides with the intrinsic sub-Laplacian defined with respect to Popp's measure, cf.\ \cite{BR-Popp}. 

For any $Y\in \Gamma(\hor)$ recall the orthogonal decomposition of the horizontal bundle (cf.\ Section \ref{s:splitting})
\begin{equation}
\hor = \hor_{\sas}(Y) \oplus \hor_{\rie}(Y) \oplus \spn\{ Y \}.
\end{equation}
Hence, the horizontal trace of the Hessian can be decomposed, for any fixed $Y$, as the partial trace on each of the above components. In particular, outside of $\Cut_\ve(x)$, and letting $Y=\nabla_\hor r_\varepsilon$, we have
\begin{equation}\label{eq:sublapdec}
\Delta_\hor r_\varepsilon = \mathrm{Hess}(r_\varepsilon)(Y,Y) + \sum_{\alpha=1}^{m} \mathrm{Hess}(r_\varepsilon)(J_{Z_\alpha}Y,J_{Z_\alpha}Y) +  \sum_{i=1}^{n-m-1}\mathrm{Hess}(r_\varepsilon)(X_i,X_i).
\end{equation}
where $X_1,\dots,X_{n-m-1}$ is an orthonormal frame for  $\hor_{\rie}(Y)$, and $Z_1,\dots,Z_{m}$ is a suitably normalized orthogonal frame for $\Gamma(\ver)$.

Therefore, one can easily obtain an uniform sub-Laplacian comparison theorem for $\Delta_\hor r_\varepsilon$ by applying separately Theorems \ref{t:geoddir}, \ref{t:Riemannian-average}, \ref{t:SasakianZpar} and \ref{t:SasakianZperp} to each term in \eqref{eq:sublapdec}.

\begin{theorem}\label{t:horlapuniform}
Let $(\M,\hor,g)$ be an H-type foliation with parallel horizontal Clifford structure, satisfying the $J^2$ condition. Let $\kappa^2$ be the curvature of the vertical leaves. Assume that there exists $\rho \in \R$ such that
\begin{equation}
\mathrm{Sec}(X \wedge Y) \geq  \rho,\qquad \forall X,Y\in \hor.
\end{equation}
Fix $\ve >0$. Then, for all $y\notin \Cut_\ve(x)$ with $\nabla_\hor r_\ve(y) \neq 0$, it holds
 \begin{multline}\label{comparison:lsd}
\Delta_\hor r_\ve(y) \leq \frac{1-\|\nabla_\hor r_\varepsilon\|^2}{r_\varepsilon} + (n-m-1) F_{\mathrm{Riem}}(r_\varepsilon,K) \\
+ F_{\mathrm{Sas}}(r_\varepsilon,K_1) +  (m-1)F_{\mathrm{Sas}}(r_\varepsilon,K_2),
 \end{multline}
where
\begin{align}
 K &= \rho \|\nabla_\hor r_\varepsilon\|^2 +\tfrac{1}{4}\|\nabla_\ver r_\varepsilon\|^2, \\
K_1 & = \rho \|\nabla_\hor r_\varepsilon\|^2 + \|\nabla_\ver r_\varepsilon\|^2, \\
K_2 & = \rho \|\nabla_\hor r_\varepsilon\|^2 +(2-\kappa\varepsilon)(\kappa\varepsilon-1)\|\nabla_\ver r_\varepsilon\|^2.
\end{align}
\end{theorem}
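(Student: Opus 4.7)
The plan is to reduce Theorem \ref{t:horlapuniform} to a direct application of the Hessian comparison estimates of the previous subsections, via the decomposition \eqref{eq:sublapdec} of the sub-Laplacian along the splitting of $\hor$ associated with $Y = \nabla_\hor r_\varepsilon(y)$. Since all the pointwise tools are already available, no new differential-geometric input is needed.

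\textbf{Step 1: Adapted orthonormal frame.} Fix $y \notin \Cut_\ve(x)$ with $\nabla_\hor r_\ve(y) \neq 0$. I choose an orthonormal frame of $\hor$ at $y$ of the following form: take
\begin{equation}
X_0 = \frac{\nabla_\hor r_\varepsilon}{\|\nabla_\hor r_\varepsilon\|},
\end{equation}
then pick an orthonormal basis $Z_1,\dots,Z_m$ of $\ver_y$ such that, if $\nabla_\ver r_\varepsilon(y) \neq 0$, then $Z_1 \propto \nabla_\ver r_\varepsilon(y)$ (if $\nabla_\ver r_\varepsilon(y) = 0$, any orthonormal basis of $\ver_y$ will do). By the H-type condition, the vectors $\tfrac{1}{\|\nabla_\hor r_\varepsilon\|} J_{Z_\alpha} \nabla_\hor r_\varepsilon$ for $\alpha=1,\dots,m$ form an orthonormal basis of $\hor_\sas(Y)$. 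Completing with an orthonormal basis $X_1,\dots,X_{n-m-1}$ of $\hor_\rie(Y)$, I obtain an orthonormal frame of $\hor_y$ compatible with the splitting \eqref{eq:splittinghor}.

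\textbf{Step 2: Term-by-term Hessian comparison.} Using \eqref{eq:sublapdec} with this frame, I bound each of the three contributions independently. The first term, $\mathrm{Hess}(r_\varepsilon)(X_0,X_0)$, is bounded by $\tfrac{1-\|\nabla_\hor r_\varepsilon\|^2}{r_\varepsilon}$ via Theorem \ref{t:geoddir}. The Riemannian-type partial trace $\sum_{i=1}^{n-m-1}\mathrm{Hess}(r_\varepsilon)(X_i,X_i)$ is bounded by $(n-m-1) F_{\mathrm{Riem}}(r_\varepsilon, K)$ via Theorem \ref{t:Riemannian-average}, whose Ricci-type hypothesis follows by tracing the sectional curvature lower bound $\mathrm{Sec} \geq \rho$ over $\hor_\rie$. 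For the Sasakian directions, I split the sum $\sum_{\alpha=1}^m \mathrm{Hess}(r_\varepsilon)(J_{Z_\alpha}Y / \|\nabla_\hor r_\ve\|, J_{Z_\alpha}Y/\|\nabla_\hor r_\ve\|)$ into the $\alpha=1$ contribution (where $Z_1 \propto \nabla_\ver r_\varepsilon$), bounded by $F_{\mathrm{Sas}}(r_\varepsilon,K_1)$ via Theorem \ref{t:SasakianZpar}, and the remaining $\alpha = 2,\dots,m$ contributions (where $Z_\alpha \perp \nabla_\ver r_\varepsilon$), each bounded by $F_{\mathrm{Sas}}(r_\varepsilon,K_2)$ via Theorem \ref{t:SasakianZperp}.

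\textbf{Step 3: Summation and edge case.} Adding the three bounds yields exactly \eqref{comparison:lsd}. The only delicate point is the case $\nabla_\ver r_\varepsilon(y) = 0$: here $K_1 = K_2 = \rho\|\nabla_\hor r_\varepsilon\|^2$ so the two Sasakian comparison functions coincide, and Theorem \ref{t:SasakianZpar} can be applied to any of the $Z_\alpha$ while Theorem \ref{t:SasakianZperp} applies to the other $m-1$ (as remarked explicitly in the statements of those theorems), so the final bound is unchanged. Since no step of the argument requires more than the already-established pointwise estimates, there is no substantial obstacle; the only care needed is in the bookkeeping of the splitting of the frame and the verification that the value $K_1$ (coming from Theorem \ref{t:SasakianZpar}) and $K_2$ (coming from Theorem \ref{t:SasakianZperp}) are used with the appropriate multiplicities $1$ and $m-1$, matching the form of the right-hand side of \eqref{comparison:lsd}.
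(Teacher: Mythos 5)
Your proof is correct and follows exactly the route the paper takes (the paper states Theorem \ref{t:horlapuniform} immediately after the remark that it follows by applying Theorems \ref{t:geoddir}, \ref{t:Riemannian-average}, \ref{t:SasakianZpar}, \ref{t:SasakianZperp} to the decomposition \eqref{eq:sublapdec}). You have correctly spelled out the normalizations of the adapted frame and the bookkeeping of the Sasakian multiplicities $1$ and $m-1$, and the handling of the degenerate case $\nabla_\ver r_\ve(y)=0$ is sound.
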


As an immediate consequence we have the following sub-Riemannian comparison theorem. We stress that the assumptions and the conclusion of the statement depend only on the restriction of $g$ to $\hor$, that is on the induced sub-Riemannian structure. In particular the curvature of the leaves $\kappa$ plays no role.
\begin{theorem}\label{t:horlapuniform0}
Let $(\M,\hor,g)$ be an H-type foliation with parallel horizontal Clifford structure, satisfying the $J^2$ condition. Assume that there exists $\rho \in \R$ such that
\begin{equation}
\mathrm{Sec}(X \wedge Y) \geq  \rho,\qquad \forall X,Y\in \hor.
\end{equation}
Then for all $y\notin \Cut_0(x)$ it holds
 \begin{equation}
 \Delta_\hor r_0(y) \leq  (n-m-1) F_{\mathrm{Riem}}(r_0,K) + F_{\mathrm{Sas}}(r_0,K_1) +  (m-1)F_{\mathrm{Sas}}(r_0,K_2),
 \end{equation}
where
\begin{align*}
 K &= \rho +\tfrac{1}{4}\|\nabla_\ver r_0\|^2, \\
K_1 & = \rho  + \|\nabla_\ver r_0\|^2, \\
K_2 & = \rho -2\|\nabla_\ver r_0\|^2.
\end{align*}
\end{theorem}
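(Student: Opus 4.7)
The plan is to obtain Theorem \ref{t:horlapuniform0} as the $\ve \to 0$ limit of Theorem \ref{t:horlapuniform}, with the convergence statements of Proposition \ref{p:cutapprox} and Remark \ref{rmk:convergence} supplying the required regularity. First I would fix $y \notin \Cut_0(x)$ and invoke Proposition \ref{p:cutapprox} to produce an open neighbourhood $V$ of $y$ and a threshold $\ve' > 0$ such that $V \cap \Cut_\ve(x) = \emptyset$ for all $0 \leq \ve < \ve'$ and $(\ve, z) \mapsto r_\ve(z)$ is smooth on $[0,\ve') \times V$, with $r_\ve \to r_0$ in $C^\infty$ on compact subsets of $V$. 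Since $\lim_{\ve \to 0} \|\nabla_\hor r_\ve(y)\| = 1$ by Remark \ref{rmk:convergence}, we have $\nabla_\hor r_\ve(y) \neq 0$ for $\ve$ sufficiently small, so Theorem \ref{t:horlapuniform} applies at $y$ for all such $\ve$.

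The second step is to pass to the limit $\ve \to 0$ in inequality \eqref{comparison:lsd}. The left-hand side $\Delta_\hor r_\ve(y) \to \Delta_\hor r_0(y)$ by $C^\infty$ convergence, since $\Delta_\hor$ is a horizontal second-order differential operator whose symbol does not depend on $\ve$. For the first term on the right-hand side, the eikonal identity $\|\nabla_\hor r_\ve\|^2 + \ve \|\nabla_\ver r_\ve\|^2 = 1$ from Remark \ref{rmk:convergence} gives
\begin{equation}
\frac{1 - \|\nabla_\hor r_\ve\|^2}{r_\ve} = \frac{\ve \|\nabla_\ver r_\ve\|^2}{r_\ve} \xrightarrow[\ve \to 0]{} 0,
\end{equation}
since $\|\nabla_\ver r_\ve\|(y)$ remains bounded (indeed converges to $\|\nabla_\ver r_0\|(y)$) and $r_\ve(y) \to r_0(y) > 0$.

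For the remaining terms, I would then track the convergence of the curvature parameters. Using $\|\nabla_\hor r_\ve\|^2 \to 1$ and $\|\nabla_\ver r_\ve\| \to \|\nabla_\ver r_0\|$ at $y$, together with the elementary observation that $(2-\kappa\ve)(\kappa\ve-1) \to -2$ as $\ve \to 0$ regardless of the value of $\kappa$, one obtains
\begin{align}
K(\ve) &\to \rho + \tfrac{1}{4}\|\nabla_\ver r_0\|^2, \qquad
K_1(\ve) \to \rho + \|\nabla_\ver r_0\|^2, \\
K_2(\ve) &\to \rho - 2\|\nabla_\ver r_0\|^2,
\end{align}
which are precisely the constants in the statement. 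Since $F_{\mathrm{Riem}}(\cdot,\cdot)$ and $F_{\mathrm{Sas}}(\cdot,\cdot)$ are continuous functions of both arguments (analytic in the second variable), the right-hand side of \eqref{comparison:lsd} converges to the desired expression, yielding the claim.

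There is no real obstacle in this argument: all the analytic work is packaged in Theorem \ref{t:horlapuniform} and Proposition \ref{p:cutapprox}. The one conceptual point worth emphasizing is that the constant $\kappa$, which encodes the curvature of the vertical leaves and hence the particular choice of Riemannian extension, disappears in the limit through the factor $(2-\kappa\ve)(\kappa\ve-1)\to -2$. This is what makes Theorem \ref{t:horlapuniform0} genuinely sub-Riemannian: its hypotheses and conclusion depend only on the horizontal data $(\M,\hor,g_\hor)$.
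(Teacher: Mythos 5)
Your proposal is correct and takes exactly the route the paper does: pass to the limit $\ve\to 0$ in Theorem \ref{t:horlapuniform}, using Proposition \ref{p:cutapprox} and Remark \ref{rmk:convergence} to justify convergence of each term on both sides of \eqref{comparison:lsd}. The paper's own proof is a two-line version of the same argument; your write-up merely makes explicit the vanishing of the first term via the eikonal identity and the cancellation of $\kappa$ through $(2-\kappa\ve)(\kappa\ve-1)\to -2$, which is also the point the paper stresses in the remark preceding the theorem.
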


\begin{proof}
By Proposition \ref{p:cutapprox}, on $\M \setminus \Cut_0(x)$ it holds in particular
\begin{equation}
\lim_{\ve \to 0} \nabla_\hor r_\ve  = \nabla_\hor r_0, \qquad
\lim_{\ve \to 0} \nabla_\ver r_\ve  = \nabla_\ver r_0, \qquad
\lim_{\ve \to 0} \Delta_\hor r_\ve  = \Delta_\hor r_0,
\end{equation}
Since on $\M \setminus \Cut_0(x)$ it also holds $\|\nabla_\hor r_0\| =1$, we can take the limit in Theorem \ref{t:horlapuniform}, which yields the statement.
\end{proof}

We note that the comparison function on the right hand side of \eqref{comparison:lsd} can be bounded from above by a function depending on $r_0$ only. Indeed, in the previous theorem, for simplicity assume that $\rho=0$. Then, 
\begin{equation}
F_{\mathrm{Riem}}(r_0,K) =F_{\mathrm{Riem}}(r_0,\tfrac{1}{4}\|\nabla_\ver r_0\|^2) \le \frac{1}{r_0},
\end{equation}
\begin{equation}
F_{\mathrm{Sas}}(r_0,K_1)=F_{\mathrm{Sas}}(r_0,\|\nabla_\ver r_0\|^2)\le \frac{4}{r_0},
\end{equation}
and from Remark \ref{vertical gradient comparison}
\begin{equation}
F_{\mathrm{Sas}}(r_0,K_2)=F_{\mathrm{Sas}}(r_0, -2\|\nabla_\ver r_0\|^2)\le F_{\mathrm{Sas}}\left(r_0, -\frac{4\pi^2}{r_0^2}\right)\le \frac{C}{r_0},
\end{equation}
where $C>0$ is an explicit constant whose value is given by
\begin{equation}
C=\pi  \left(\coth (\pi )+\frac{\pi }{\pi  \coth (\pi )-1}\right) >4.
\end{equation}
As a consequence, one obtains the following corollary (which is only sharp when $m=1$).

\begin{corollary}
Let $(\M,\hor,g)$ be an H-type foliation with parallel horizontal Clifford structure, satisfying the $J^2$ condition, and with non-negative horizontal Bott curvature. Then, there exists a constant $C>4$ such that, outside of $\Cut_0(x)$, it holds
\begin{equation}
\Delta_\hor r_0 \le\frac{n-m+3+C(m-1)}{r_0}.
\end{equation}
\end{corollary}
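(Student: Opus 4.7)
The plan is to apply Theorem \ref{t:horlapuniform0} with $\rho=0$, which is permitted by the non-negative horizontal Bott sectional curvature hypothesis, and then dominate each of the three comparison terms on the right-hand side by a multiple of $1/r_0$. For the Riemannian contribution and the first Sasakian contribution, the quantities
\[
K = \tfrac{1}{4}\|\nabla_\ver r_0\|^2 \geq 0, \qquad K_1 = \|\nabla_\ver r_0\|^2 \geq 0
\]
are non-negative, so the monotonicity of $F_{\mathrm{Riem}}(r_0,\cdot)$ and $F_{\mathrm{Sas}}(r_0,\cdot)$ in their second argument (which follows at once from the explicit formulas, since $\sqrt{k}\cot\sqrt{k}r$ is decreasing in $k$ and similarly for the Sasakian analogue) immediately yields the elementary bounds $F_{\mathrm{Riem}}(r_0,K) \leq F_{\mathrm{Riem}}(r_0,0) = 1/r_0$ and $F_{\mathrm{Sas}}(r_0,K_1) \leq F_{\mathrm{Sas}}(r_0,0) = 4/r_0$.

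The third term requires work because $K_2 = -2\|\nabla_\ver r_0\|^2 \leq 0$. Here I would invoke Remark \ref{vertical gradient comparison} which, thanks to $\rho \geq 0$, furnishes the a priori upper bound $r_0 \|\nabla_\ver r_0\| \leq 2\pi$ on the vertical gradient. Plugged into the expression for $K_2$, this yields a lower bound of the form $K_2 \geq -c\pi^2/r_0^2$ for an explicit universal constant $c$. Using the scale invariance $F_{\mathrm{Sas}}(r_0, -\alpha/r_0^2) = g(\sqrt{\alpha})/r_0$ implicit in the definition of $F_{\mathrm{Sas}}$, together with monotonicity in the negative regime, this produces the estimate $F_{\mathrm{Sas}}(r_0,K_2) \leq C/r_0$, with the universal constant $C$ evaluated in closed form at the threshold (giving $C = \pi(\coth(\pi) + \pi/(\pi\coth(\pi)-1)) > 4$).

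Summing the three contributions with the combinatorial weights of Theorem \ref{t:horlapuniform0}, namely $n-m-1$, $1$, and $m-1$, one obtains
\[
\Delta_\hor r_0 \leq \frac{n-m-1}{r_0} + \frac{4}{r_0} + \frac{C(m-1)}{r_0} = \frac{n-m+3+C(m-1)}{r_0},
\]
as claimed. The only subtle point is the monotonicity of $F_{\mathrm{Sas}}(r,k)$ in $k$ along the negative range and the strict inequality $C > 4$; both reduce to elementary single-variable calculus checks on the explicit model function and do not present any genuine obstacle.
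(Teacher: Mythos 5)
Your proposal follows the paper's argument essentially step for step: apply Theorem~\ref{t:horlapuniform0} with $\rho=0$, bound $F_{\mathrm{Riem}}(r_0,K)$ and $F_{\mathrm{Sas}}(r_0,K_1)$ by their values at zero curvature using monotonicity in the curvature argument, and control the $K_2<0$ term via the a priori estimate of Remark~\ref{vertical gradient comparison}; summing with the weights $n-m-1$, $1$, $m-1$ yields the claim.

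One small bookkeeping caution worth flagging: Remark~\ref{vertical gradient comparison} gives $r_0\|\nabla_\ver r_0\|\le 2\pi$, hence $K_2=-2\|\nabla_\ver r_0\|^2 \ge -8\pi^2/r_0^2$. The closed form $C=\pi\bigl(\coth\pi+\tfrac{\pi}{\pi\coth\pi-1}\bigr)$ you quote corresponds to evaluating $F_{\mathrm{Sas}}$ at $-4\pi^2/r_0^2$ rather than at $-8\pi^2/r_0^2$, so it is not quite the constant produced by your own chain of inequalities (the latter is somewhat larger). This discrepancy is also present in the paper's text, and it is immaterial here because the statement only asserts existence of some $C>4$, but if you intend to record an explicit value of $C$ you should recompute it at the correct threshold.
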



\subsection{Removing the \texorpdfstring{$J^2$}{J2} condition}

Here we study a class of structures where the $J^2$ condition does not necessarily hold. In particular, for H-type foliations with completely parallel torsion and non-negative horizontal sectional curvature, we  achieve the sharp sub-Laplacian comparison result:
\begin{equation}\label{eq:htypeLCT}
\Delta_\mathcal{H} r_0 \le \frac{n+3m-1}{r_0},
\end{equation}
outside of the cut locus. The proofs are mostly minor modifications of the ones seen in the previous sections.

Such comparison theorem  applies in the case of H-type groups. We remark that the estimate \eqref{eq:htypeLCT} is equivalent to the measure contraction property $\mathrm{MCP}(0,N)$ for all $N\geq n+3m-1$ of H-type Carnot groups, proven in \cite{Rizzi18}.

\subsubsection{A finer splitting}

Fix a non-zero $Y\in T\M$ (usually $Y=\dot\gamma$ is the tangent vector to a geodesic, in which case the definition that follow make sense along $\gamma$). If the $J^2$ condition does not hold, the splitting introduced in Section \ref{s:splitting} is no longer $\hat{\nabla}^\varepsilon$-parallel. 
We introduce thus a finer splitting. Let first
\begin{equation}\label{eq:splittingver-finer}
\ver = \ver_{\hty}(Y) \oplus \ver_{\sas}(Y),
\end{equation}
where each subspace is defined as
\begin{align}
\ver_{\sas}(Y) &= \{ Z \in \ver \mid J_{Y}J_Z Y \subset J_{\V}(Y) \oplus \spn\{Y_\hor\} 	\}, \\
\ver_{\hty}(Y) & = \{ Z \in \ver \mid J_{Y}J_Z Y \perp J_{\V}(Y) \oplus \spn\{Y_\hor\}, \text{ and } J_Y J_Z Y \neq 0\}.
\end{align}
Likewise, one has a splitting of the horizontal space
\begin{equation}\label{eq:splittinghor-finer}
\hor = \hor_{\hty}(Y) \oplus \hor_{\sas}(Y) \oplus \hor_{\rie}(Y) \oplus \spn\{ Y_\hor \},
\end{equation}
where each subspace is defined as
\begin{align}
\hor_{\sas}(Y) &= \{J_Z Y \mid Z \in \ver_{\sas}(Y)\}, \\
\hor_{\hty}(Y) & = \{J_Z Y, \, J_{Y}J_Z Y \mid Z \in \ver_\hty(Y) \},\\
\hor_{\rie}(Y) &= \{X \in \hor \mid X \perp \hor_{\sas}(Y)\oplus \hor_{\hty}(Y)\oplus  \spn\{Y_\hor\} \}.
\end{align}
\begin{remark}
If the $J^2$ condition holds then $\hor_{\hty}(Y) = \ver_{\hty} (Y)= \emptyset$, and $\hor_{\sas}(Y) = J_{\V}(Y)$. Thus \eqref{eq:splittingver-finer} is trivial and \eqref{eq:splittinghor-finer} reduces  to the one introduced in Section \ref{s:splitting}.
\end{remark}
\begin{remark}
When the $J^2$ condition does not hold, the dimensions of each subspace may depend on $Y$. We remark that $J_Y$ preserves $\hor_\hty(Y)$, $\hor_\sas(Y)\oplus \spn\{ Y_\hor \}$, and thus $\hor_\rie(Y)$. It follows that for some $p,q,\ell \geq 0$ it holds
\begin{equation}
\dim \hor_\hty(Y) = 2p, \qquad \dim\hor_\sas(Y) = 2q+1, \qquad \dim\hor_\rie(Y) = 2\ell,
\end{equation}
and thus $n = 2(p+q+\ell +1)$. Here $p,q,\ell$ might depend on $Y$.
\end{remark}
The next statement is a generalization of Proposition \ref{p:parallelism}.
\begin{proposition}\label{p:parallelism-finer}
Let $(\M,\hor,g)$ be an H-type foliation with parallel horizontal Clifford structure. Let $\gamma:[0,r_\varepsilon] \to \M$ be a non-trivial $g_\varepsilon$-geodesic. The splitting \eqref{eq:splittingver-finer}-\eqref{eq:splittinghor-finer} associated with $\dot{\gamma}$ is orthogonal and $\hat{\nabla}^\varepsilon_{\dot\gamma}$-parallel. Hence, each subspace has constant dimension along $\gamma$.
\end{proposition}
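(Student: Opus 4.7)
The strategy is to generalize the argument of Proposition~\ref{p:parallelism} by working with two-dimensional $\hat\nabla^\varepsilon_{\dot\gamma}$-invariant planes rather than one-dimensional lines. The starting point is the identity, for any $\hat\nabla^\varepsilon$-parallel $Z\in\Gamma(\ver)$ along $\gamma$,
\begin{equation}
\hat\nabla^\varepsilon_{\dot\gamma}(J_Z\dot\gamma) = \alpha\bigl(J_{\dot\gamma}J_Z\dot\gamma + \langle\dot\gamma_\ver, Z\rangle\dot\gamma_\hor\bigr), \qquad \alpha := \tfrac{2-\varepsilon\kappa}{\varepsilon},
\end{equation}
which is a mild extension of the formula used in the proof of Proposition~\ref{p:parallelism}; its derivation uses that $\dot\gamma_\ver$ is parallel, the identity $(\hat\nabla^\varepsilon_{\dot\gamma} J)_{\dot\gamma_\ver}=0$ (a consequence of the Clifford hypothesis and Lemma~\ref{l:someproperties}), and H-type. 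Applying $\hat\nabla^\varepsilon_{\dot\gamma}$ once more produces a companion identity for $\hat\nabla^\varepsilon_{\dot\gamma}(J_{\dot\gamma}J_Z\dot\gamma)$. Together these show that for every parallel $Z$, the $2$-plane $\Pi_Z := \spn\{J_Z\dot\gamma,\, J_{\dot\gamma}J_Z\dot\gamma\}$ is $\hat\nabla^\varepsilon_{\dot\gamma}$-parallel along $\gamma$.

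The orthogonality of the splittings is then algebraic, from the H-type polarization $\langle J_{Z_1}X, J_{Z_2}X\rangle = \langle Z_1, Z_2\rangle\|X\|^2$. For $Z_1\in\ver_\sas$ and $Z_2\in\ver_\hty$, the vectors $J_{\dot\gamma}J_{Z_1}\dot\gamma$ and $J_{\dot\gamma}J_{Z_2}\dot\gamma$ lie in $W := J_\ver(\dot\gamma)\oplus\spn\{\dot\gamma_\hor\}$ and $W^\perp$ respectively, hence are orthogonal, and expanding the inner product via H-type forces $\langle Z_1, Z_2\rangle=0$ when $\dot\gamma_\ver\neq 0$ (the complementary case being trivial). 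The orthogonality of the horizontal pieces follows from the same identity. Since $\dot\gamma_\hor\in\Pi_{\dot\gamma_\ver}$ because $J_{\dot\gamma}J_{\dot\gamma_\ver}\dot\gamma = -\|\dot\gamma_\ver\|^2\dot\gamma_\hor$, the subspace $E := \hor_\sas\oplus\hor_\hty\oplus\spn\{\dot\gamma_\hor\}$ equals the linear span of all $\Pi_Z$'s, hence is parallel; consequently $\hor_\rie = E^\perp\cap\hor$ is parallel as well.

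The main obstacle is refining the parallelism within $E$. The plan is to establish $P_t(\ver_\sas(\gamma(0)))=\ver_\sas(\gamma(t))$ directly (where $P_t$ denotes $\hat\nabla^\varepsilon$-parallel transport), from which the remaining parallelisms follow by orthogonality together with the representation $\hor_\sas\oplus\spn\{\dot\gamma_\hor\}=\spn\bigl(\bigcup_{Z\in\ver_\sas}\Pi_Z\bigr)$. Solving the ODE explicitly, and using that $J_{\dot\gamma_\ver}$ is $\hat\nabla^\varepsilon_{\dot\gamma}$-parallel as an endomorphism of $\hor$, one obtains the closed-form expression
\begin{equation}
J_{Z(t)}\dot\gamma(t) \;=\; P_t\bigl(e^{\alpha t J_{\dot\gamma_\ver(0)}} J_{Z(0)}\dot\gamma(0)\bigr)
\end{equation}
for any parallel $Z\perp\dot\gamma_\ver$. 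Applying this to $Q_\gamma(Z) := J_{\dot\gamma}J_Z\dot\gamma$ and exploiting that the plane $\spn\{\dot\gamma_\hor(0), J_{\dot\gamma_\ver(0)}\dot\gamma(0)\}$ is $J_{\dot\gamma_\ver(0)}$-invariant, a direct computation of $P_t^{-1}(W(\gamma(t)))$ shows that the condition $Q_{\gamma(t)}(Z(t))\in W(\gamma(t))$ is $t$-independent, i.e., equivalent to $Z_0\in\ver_\sas(\gamma(0))$. Combined with the trivial inclusion $\dot\gamma_\ver\in\ver_\sas$ and a dimension count via orthogonality, this proves $P_t$ intertwines the $\ver_\sas$'s; the parallelism of $\ver_\hty$, $\hor_\sas$, $\hor_\hty$ then follows, and the constancy of dimensions is immediate from $P_t$ being an isomorphism.
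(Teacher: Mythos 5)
Your proposal is correct, and it rests on the same foundational identity as the paper's proof, namely $(\hat\nabla^\varepsilon_{\dot\gamma}J)_Z = (\tfrac{2}{\varepsilon}-\kappa)(J_{\dot\gamma}J_Z + \langle\dot\gamma,Z\rangle\mathbbold{1}_\hor)$, from which $(\hat\nabla^\varepsilon_{\dot\gamma}J)_{\dot\gamma}=0$ and the parallelism of the $2$-planes $\Pi_Z$ both follow. Where you diverge is in how the vertical splitting is handled: the paper works infinitesimally, showing $\ver_\hty$ is $\hat\nabla^\varepsilon_{\dot\gamma}$-parallel by computing that $\tfrac{d}{dt}\langle J_{\dot\gamma}J_W\dot\gamma, J_\xi\dot\gamma\rangle = 0$ and $\tfrac{d}{dt}\langle J_{\dot\gamma}J_W\dot\gamma, \dot\gamma\rangle = 0$ for parallel $W,\xi$, and then obtains $\ver_\sas$ by metricity; the horizontal pieces $\hor_\sas$ and $\hor_\hty$ are then treated one generator at a time (showing $\hat\nabla^\varepsilon_{\dot\gamma}J_Z\dot\gamma$ has no $\dot\gamma_\hor$- or $\ver_\hty$-component when $Z\in\ver_\sas$), and $\hor_\rie$ comes last. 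You instead \emph{integrate} the linear ODE, getting the explicit transport formula $J_{Z(t)}\dot\gamma(t) = P_t\bigl(e^{\alpha t J_{\dot\gamma_\ver(0)}}J_{Z(0)}\dot\gamma(0)\bigr)$, and exploit the $J_{\dot\gamma_\ver(0)}$-invariance of $\spn\{\dot\gamma_\hor(0), J_{\dot\gamma_\ver(0)}\dot\gamma_\hor(0)\}$ to show the defining membership condition for $\ver_\sas$ is conserved; you also get $\hor_\rie$ early, as the orthocomplement of $E=\spn(\bigcup_Z \Pi_Z)$. Both routes close; yours is somewhat longer and leaves a nontrivial computation ($P_t^{-1}(W(\gamma(t)))$, and the $J^2$-free fact that $J_{\dot\gamma}J_Z\dot\gamma \perp \{J_W\dot\gamma : W\in\ver_\hty\}$ for $Z\in\ver_\sas$, needed to identify $E$ with the span of the $\Pi_Z$'s) only sketched, but it has the merit of producing an explicit parallel-transport formula which is a stronger piece of information than mere invariance and could be reused downstream; the paper's derivative check is more economical.
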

\begin{proof}
The case in which $\dot\gamma \in \hor$ is trivial. So we suppose $\dot\gamma_\ver \neq 0$ and, without loss of generality, that $\|\dot\gamma_\hor\|=1$. For simplicity in this proof we set $\ver_\sas = \ver_\sas(\dot\gamma)$ and similarly for the other subspaces.

We first show that the splitting is orthogonal. The map $Z\mapsto J_{\dot\gamma}J_Z\dot\gamma$ is injective, so $\dim \ver_\hty + \dim \ver_\sas = \dim \ver$. Furthermore, if $Z \in \ver_\sas$ and $W \in \ver_\hty$ we have
\begin{equation}
\langle W, Z\rangle = \langle J_{\dot\gamma} J_Z \dot\gamma,J_{\dot\gamma}J_W \dot\gamma \rangle =0,
\end{equation}
hence $\ver_{\hty}\perp \ver_{\sas}$. The orthogonality of the horizontal splitting follows by  definition.

We prove that the splitting is $\hat{\nabla}^\varepsilon_{\dot\gamma}$-parallel (simply ``parallel'', in the following). Using the parallel horizontal Clifford structure and the H-type assumptions, we have
\begin{align*}
(\hat{\nabla}^\varepsilon_{\dot\gamma}J)_Z = (\nabla_{\dot\gamma}J)_Z + [J^\varepsilon_{\dot\gamma},J_Z] =  \left(\frac{2}{\varepsilon}-\kappa\right)\left(J_{\dot\gamma}J_Z + \langle\dot\gamma,Z\rangle\mathbbold{1}_\hor\right).
\end{align*}
In particular $(\hat{\nabla}^\varepsilon_{\dot\gamma}J)_{\dot\gamma} = 0$. Let $W$ be $\hat{\nabla}^\varepsilon_{\dot\gamma}$-parallel. Let also $\xi$ denote a generic vertical vector field along the geodesic, which we may assume to be parallel as well. Using the above relation, we have
\begin{equation}
\frac{d}{dt} \langle J_{\dot\gamma} J_W \dot\gamma, J_\xi\dot\gamma\rangle = 0, \qquad \frac{d}{dt} \langle J_{\dot\gamma} J_W \dot\gamma, \dot\gamma\rangle = 0.
\end{equation}
In particular, if $W \in \ver_{\hty}$ at the initial time, it will remain in $\ver_{\hty}$  for all times. This proves that $\ver_{\hty}$ is parallel. Since $\hat{\nabla}^\varepsilon_{\dot\gamma}$ is metric and preserves the vertical space, it follows that $\ver_{\sas}$ is parallel as well.

Consider one of the generators of $\hor_{\sas}$, that is $J_Z\dot\gamma$, with $Z\in \ver_\sas$. We can assume the latter to be $\hat{\nabla}^\varepsilon$-parallel, by the previous step of the proof. If $Z$ is proportional to $\dot\gamma_\ver$, then $\hat{\nabla}^\varepsilon_{\dot\gamma} J_Z\dot\gamma=0$. On the other hand, if $Z$ is orthogonal to $\dot\gamma_\ver$, we have
\begin{equation}
\hat{\nabla}^\varepsilon_{\dot\gamma} J_Z\dot\gamma =  \left(\frac{2}{\varepsilon}-\kappa\right) J_{\dot\gamma}J_Z \dot\gamma =  \left(\frac{2}{\varepsilon}-\kappa\right)\left(J_{\xi_{\sas}}\dot\gamma + J_{\xi_{\hty}}\dot\gamma + \alpha \dot\gamma_\hor\right),
\end{equation}
for some $\xi_{\hty} \in \ver_\hty$, $\xi_{\sas} \in \ver_\sas$, and $\alpha \in \R$. The last equality follows from the definition of $\hor_{\sas}$. We easily deduce that $\alpha =0$ and $\xi_{\hty} = 0$. Therefore $\hor_{\sas}$ is parallel.

Similarly, if $J_W\dot\gamma$ and $J_{\dot\gamma}J_W\dot\gamma$ are generators of $\hor_\hty$, for $W\in\ver_\hty$, and recalling that $W\perp \dot\gamma$ in this case, we have
\begin{align*}
\hat{\nabla}^\varepsilon_{\dot\gamma} J_W\dot\gamma  & =  \left(\frac{2}{\varepsilon}-\kappa\right) J_{\dot\gamma} J_W \dot\gamma, \\
\hat{\nabla}^\varepsilon_{\dot\gamma} J_{\dot\gamma} J_W\dot\gamma  & = \left(\frac{2}{\varepsilon}-\kappa\right) J_{\dot\gamma}^2 J_W \dot\gamma = - \left(\frac{2}{\varepsilon}-\kappa\right)J_W\dot\gamma.
\end{align*}
It follows that $\hor_{\hty}$ is parallel. Since $\hat{\nabla}^\varepsilon$ is metric, and preserves $\hor$, then also $\hor_{\rie}$ must be parallel.
\end{proof}

\subsubsection{Hessian comparison theorems}

We state here versions of the comparison theorems without the $J^2$ condition. For the case of Riemannian directions there is mostly no difference with respect to Theorem \ref{t:Riemannian-sect}. The assumption is somewhat more involved, due to the fact that now the spaces in the splitting depend also on the vertical component of the vector field $Y$ determining it. If $x\in \M$ is fixed and $y\notin\Cut_\ve(x)$, so that $r_\ve = d_\ve(x,\cdot)$ is smooth in a neighbourhood of $y$, we adopt the shorthand
\begin{equation}
\hor_{\rie}(y):= \hor_{\rie}(\nabla r_\ve(y)), \qquad \ver_{\rie}(y):= \ver_{\rie}(\nabla r_\ve(y)),
\end{equation}
and similarly for all other subspaces.

\begin{theorem}
Let $(\M,\hor,g)$ be an H-type foliation with parallel horizontal Clifford structure. Assume that there exists $\rho \in \R$ such that
\begin{equation}
\mathrm{Sec}(X \wedge Y_\hor) \geq \rho,\qquad \forall\, Y\in T\M, \quad X \in \hor_\rie(Y).
\end{equation}
Let $y\notin \Cut_\ve(x)$ with $\nabla_\hor r_\ve(y) \neq 0$. Let $X\in \hor_{\rie}(y)$, with $\|X\|=1$. Then at $y$ it holds
\begin{equation}
\mathrm{Hess}(r_\varepsilon)(X,X) \leq  F_{\mathrm{Riem}}(r_\varepsilon,K), \qquad K = \rho \|\nabla_\hor r_\varepsilon\|^2 + \tfrac{1}{4}\|\nabla_\ver r_\varepsilon\|^2.
\end{equation}
Provided that $K>0$, it follows that
\begin{equation}
r_\varepsilon(y) < \frac{\pi}{\sqrt{\rho \|\nabla_\hor r_\varepsilon\|^2 + \tfrac{1}{4}\|\nabla_\ver r_\varepsilon\|^2}}.
\end{equation}
\end{theorem}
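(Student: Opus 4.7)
My plan is to repeat the argument of Theorem \ref{t:Riemannian-sect} almost verbatim, replacing the use of Proposition \ref{p:parallelism} (which needed the $J^2$ condition) by Proposition \ref{p:parallelism-finer}. The key observation is that the refined decomposition \eqref{eq:splittinghor-finer} isolates $\hor_\rie(Y)$ as a subspace that is (i) $\hat{\nabla}^\varepsilon_{\dot\gamma}$-parallel along any $g_\varepsilon$-geodesic, and (ii) preserved by $J_{\dot\gamma}$. Both properties are exactly what the proof of Theorem \ref{t:Riemannian-sect} requires, and nothing else in that argument uses the $J^2$ hypothesis.

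Concretely, I fix $\varepsilon > 0$, the unit-speed $g_\varepsilon$-geodesic $\gamma:[0,r_\varepsilon] \to \M$ joining $x$ to $y$, and note that along $\gamma$ one has $\hor_\rie(\nabla r_\varepsilon) = \hor_\rie(\dot\gamma)$. Given a unit vector $X(r_\varepsilon) \in \hor_\rie(y)$, I extend it along $\gamma$ to a $\hat{\nabla}^\varepsilon$-parallel unit vector field $X\in \hor_\rie(\dot\gamma)$, which is possible by Proposition \ref{p:parallelism-finer}. Since $(\hat{\nabla}^\varepsilon_{\dot\gamma}J)_{\dot\gamma} = 0$ under horizontally parallel torsion (see Lemma \ref{l:someproperties}) and since $J_{\dot\gamma}$ preserves $\hor_\rie(\dot\gamma)$ by the remark following \eqref{eq:splittinghor-finer}, the field $Y := \tfrac{1}{\varepsilon v} J_{\dot\gamma} X$ is also a $\hat{\nabla}^\varepsilon$-parallel unit vector field in $\hor_\rie(\dot\gamma)$, orthogonal to $X$, and satisfies $J_{\dot\gamma}X = \varepsilon v Y$, $J_{\dot\gamma}Y = -\varepsilon v X$.

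I then use the very same test field $W(t) = f(t)X(t) + g(t)Y(t)$ as in the proof of Theorem \ref{t:Riemannian-sect}, with $f,g$ chosen to solve the constant-curvature Jacobi system
\begin{align*}
\ddot{f} + \dot{g}v + f\rho h^2 &= 0, \\
\ddot{g} - \dot{f}v + g\rho h^2 &= 0,
\end{align*}
with boundary conditions $f(0)=g(0)=g(r_\varepsilon)=0$ and $f(r_\varepsilon)=1$. The explicit solution is the same as before and exists provided $r_\varepsilon < \pi/\sqrt{K}$. By Lemma \ref{l:Jacobiequation}(b), since $W \in \hor_\rie(\dot\gamma)$ we have $W_\ver = 0$ and the Jacobi operator reduces to
\begin{equation*}
\mathcal{Z}(W) = \hat{\nabla}^\varepsilon_{\dot\gamma}\hat{\nabla}^\varepsilon_{\dot\gamma}W - J^\varepsilon_{\dot\gamma}\hat{\nabla}^\varepsilon_{\dot\gamma}W + J^\varepsilon_{\hat{\nabla}^\varepsilon_{\dot\gamma}W}\dot\gamma + R_\hor(W,\dot\gamma)\dot\gamma,
\end{equation*}
which, using the sectional curvature lower bound along $X\wedge Y$ with $X\in\hor_\rie(Y)$, yields $\langle \mathcal{Z}(W),W\rangle_\varepsilon \ge 0$, so condition \eqref{eq:comparison-condition} of Theorem \ref{t:comparison} is satisfied. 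Applying the comparison principle (and Remark \ref{rmk:hessian}, since $X$ is horizontal) gives $\mathrm{Hess}(r_\varepsilon)(X,X) \le \dot f(r_\varepsilon) = F_{\mathrm{Riem}}(r_\varepsilon,K)$, and the diameter bound follows as a standard consequence.

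The main (and essentially only) obstacle was to upgrade the parallelism lemma in the absence of the $J^2$ condition; once Proposition \ref{p:parallelism-finer} is available and we observe that the refined $\hor_\rie$ is still $J_{\dot\gamma}$-stable, the original proof goes through without change. No delicate computation is needed beyond those already performed in Section~\ref{sec:ComparisonRiemannian}.
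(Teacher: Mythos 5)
Your proof is correct and takes essentially the same approach as the paper: the published argument is exactly a one-line reference ("The proof is analogous to the one of Theorem \ref{t:Riemannian-sect}. The only ingredient was Proposition \ref{p:parallelism}, which is now replaced by the more general Proposition \ref{p:parallelism-finer}"), and you have simply spelled out the details it implies—namely that $\hor_\rie(\dot\gamma)$ from the finer splitting is still $\hat\nabla^\ve_{\dot\gamma}$-parallel and $J_{\dot\gamma}$-stable, that $T(W,\dot\gamma)=0$ and $W_\ver=0$ for $W$ in this subspace, and hence that the Jacobi operator and test field from Theorem \ref{t:Riemannian-sect} carry over unchanged.
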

\begin{proof}
The proof is analogous to the one of Theorem \ref{t:Riemannian-sect}. The only ingredient was Proposition \ref{p:parallelism}, which is now replaced by the more general Proposition \ref{p:parallelism-finer} when the $J^2$ condition does not hold.
\end{proof}

For the special horizontal vectors $J_{Z}\nabla_\hor r_\varepsilon$ with $Z \propto \nabla_\ver r_\ve$, which indeed belongs to $\hor_{\sas}(\nabla r_\varepsilon)$, Theorem \ref{t:SasakianZpar} holds unchanged.

Consider then the remaining Sasakian directions $X= J_Z \nabla r_\varepsilon$ for $Z\in \ver_\sas$, and $Z\perp \nabla_\ver r_\varepsilon$. We remind that these directions appear in pairs $J_Z \nabla r_\varepsilon$, $J_{\nabla_\ver r_\varepsilon}J_Z \nabla r_\varepsilon$, which are both of the form $J_{Z'} \nabla r_\varepsilon$ for some $Z'$ depending on $\nabla r_\varepsilon$. In this part of the space, it is as the $J^2$ condition was verified, and we have the following result.

\begin{theorem}
Let $(\M,\hor,g)$ be an H-type foliation with parallel horizontal Clifford structure. Let $\kappa^2$ be the curvature of the vertical leaves. Assume that there exists $\rho \in \R$ such that
\begin{equation}
\mathrm{Sec}(X \wedge J_ZX) \geq \rho,\qquad \forall X\in \hor,\, Z\in \ver.
\end{equation}
Let $y \notin \Cut_\ve(x)$, with $\nabla_\hor r_\ve(y) \neq 0$. Let $Z\in \ver_{\sas}(y)$ be non-zero with $Z\perp \nabla_\ver r_\varepsilon(y)$. Set
\begin{equation}
X = \frac{J_Z \nabla_\hor r_\varepsilon}{\|Z\|\| \nabla_\hor r_\varepsilon\|}.
\end{equation}
Then, at $y$, it holds
\begin{equation}
\mathrm{Hess}(r_\varepsilon)(X,X) \leq  F_{\mathrm{Sas}}(r_\varepsilon,K_2),
\end{equation}
where
\begin{equation}
K_2 = \rho \|\nabla_\hor r_\varepsilon\|^2 +(2-\kappa\varepsilon)(\kappa\varepsilon-1)\|\nabla_\ver r_\varepsilon\|^2.
\end{equation}
If, furthermore, $\rho = \kappa = 0$, one has the sharper bound
\begin{equation}
\mathrm{Hess}(r_\varepsilon)(X,X) \leq  \frac{v}{2} \cot\left(\frac{v r_\varepsilon}{2}\right) + v \frac{1-\cos(v r_\varepsilon)}{v r_\varepsilon- \sin(vr_\varepsilon)} \leq \frac{4}{r_\varepsilon}.
\end{equation}
\end{theorem}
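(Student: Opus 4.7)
The plan is to adapt the proof of Theorem \ref{t:SasakianZperp} nearly verbatim, with the single substantial change being the substitution of Proposition \ref{p:parallelism} by Proposition \ref{p:parallelism-finer}. Without the global $J^2$ condition the four-vector frame used in Theorem \ref{t:SasakianZperp} need not span a $\hat{\nabla}^\varepsilon_{\dot\gamma}$-invariant subspace along the geodesic; however the localized hypothesis $Z\in\ver_\sas(y)$ is precisely the replacement that restores this invariance. Specifically, let $\gamma:[0,r_\ve]\to\M$ be the unit-speed $g_\varepsilon$-geodesic from $x$ to $y$, and let $Z$ be the $\hat{\nabla}^\varepsilon$-parallel extension along $\gamma$ of the given $Z\in \ver_\sas(y)$. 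By Proposition \ref{p:parallelism-finer} this extension satisfies $Z(t)\in \ver_\sas(\dot\gamma(t))$ and $Z(t)\perp\dot\gamma_\ver(t)$ for all $t\in[0,r_\ve]$. By definition of $\ver_\sas$, this forces $J_{\dot\gamma}J_Z\dot\gamma$ to lie in $J_{\ver}(\dot\gamma)\oplus\spn\{\dot\gamma_\hor\}$, ruling out any $\hor_\hty$-component that would otherwise obstruct the computation.

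With this in place, introduce the $g$-orthonormal tuple
\begin{equation*}
X = \frac{J_Z\dot\gamma}{h},\qquad Y = \frac{J_{\dot\gamma}J_Z\dot\gamma}{h\varepsilon v},\qquad Z,\qquad V = \frac{T(\dot\gamma,J_{\dot\gamma}J_Z\dot\gamma)}{h^2\varepsilon v},
\end{equation*}
and consider the trial field $W(t) = a(t)X(t) + c(t)Y(t) + b(t)Z(t) + d(t)V(t)$ with boundary conditions $W(0)=0$ and $W(r_\ve)=X(r_\ve)$, exactly as in the proof of Theorem \ref{t:SasakianZperp}. The horizontally parallel torsion, the parallel horizontal Clifford structure, and the invariance of $\spn\{X,Y,Z,V\}$ under $\hat{\nabla}^\varepsilon_{\dot\gamma}$ combine to yield the very same Jacobi system as in Theorem \ref{t:SasakianZperp}. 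Passing to the simplified real system, solving explicitly under the boundary conditions (which is possible whenever $r_\ve\sqrt{K_2}<2\pi$), checking that the remainder is non-negative, and invoking the comparison principle Theorem \ref{t:comparison} then gives a pointwise bound on the Hessian; the $\varepsilon$-monotonicity argument from the proof of Theorem \ref{t:SasakianZpar} finally upper-bounds this by $F_{\mathrm{Sas}}(r_\ve,K_2)$.

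For the sharper bound when $\rho=\kappa=0$, one can afford to keep all the couplings explicit rather than bound them away. The simplified real system takes a particularly benign form (with $K_2=-2v^2$ entering as a harmless negative constant) and can be solved in closed form with the boundary conditions, producing the trigonometric expression $\tfrac{v}{2}\cot(\tfrac{vr_\ve}{2}) + v\tfrac{1-\cos(vr_\ve)}{vr_\ve-\sin(vr_\ve)}$ upon substitution into the comparison bound. The final estimate $\leq 4/r_\ve$ is an elementary analytic fact: the displayed trigonometric expression is a non-increasing function of $\tau = vr_\ve \in (0,2\pi]$ whose limit at $\tau \to 0^+$ equals $4/r_\ve$, and the admissible range $\tau\leq 2\pi$ is guaranteed by the cotangent injectivity bound of Remark \ref{vertical gradient comparison}.

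The main obstacle is the verification that, despite the absence of the global $J^2$ condition, the algebraic closure conditions invoked throughout the proof of Theorem \ref{t:SasakianZperp} continue to hold along $\gamma$ when only $Z\in\ver_\sas(y)$ is assumed. This reduces to reading carefully the definition of $\ver_\sas(\dot\gamma)$ together with the parallelism statement of Proposition \ref{p:parallelism-finer}; once these are in place, every subsequent calculation is a literal duplicate of one already performed, and no new ideas are required.
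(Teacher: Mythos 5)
Your argument for the first bound matches the paper's: you correctly identify that the role of the $J^2$ condition in Theorem \ref{t:SasakianZperp} is replaced by the hypothesis $Z\in\ver_\sas(y)$ together with the parallelism statement of Proposition \ref{p:parallelism-finer}, after which the four-vector frame $\{X,Y,Z,V\}$ spans a $\hat\nabla^\varepsilon_{\dot\gamma}$-parallel subspace and the Jacobi/comparison computation of Theorem \ref{t:SasakianZperp} goes through unchanged. (Two small omissions: you should note that $Z\in\ver_\sas(\dot\gamma)$ with $Z\perp\dot\gamma_\ver$ is precisely what guarantees $V=T(\dot\gamma,J_{\dot\gamma}J_Z\dot\gamma)$ is a \emph{non-zero} element of $\ver_\sas(\dot\gamma)$, which is needed for the frame to be well defined when $v>0$; and the degenerate case $v=0$ requires a separate treatment with $c=d=0$, as in Theorem \ref{t:SasakianZperp}.)

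The argument for the sharper bound when $\rho=\kappa=0$, however, contains a genuine gap concealed by an internal inconsistency. You open by saying one should ``keep all the couplings explicit rather than bound them away'' — that is the right intuition — but then you state that the bound is obtained by solving ``the simplified real system'' with $K_2 = -2v^2$. These are incompatible: the simplified system (obtained by discarding the purely imaginary coefficients, including the $3iv\dot\theta$ term, which survives even when $\kappa=0$) only reproduces the first-part hyperbolic bound $F_{\mathrm{Sas}}(r_\ve,-2v^2)$, built from $\cosh(\sqrt{2}\,v\,\cdot)$ and $\sinh(\sqrt{2}\,v\,\cdot)$. It does not produce the displayed trigonometric function, whose oscillatory $\cos(vr_\ve),\sin(vr_\ve)$ structure comes precisely from the imaginary coupling $3iv\dot\theta$ that the simplified system throws away. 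The correct path — and what the paper does — is to solve the \emph{exact} system \eqref{eq:completesystem1}--\eqref{eq:completesystem2} with $\rho=\kappa=0$; its characteristic roots $\mu\in\{-iv,-2iv\}$ give the genuinely oscillatory comparison solution that produces $\tfrac{v}{2}\cot(\tfrac{vr_\ve}{2}) + v\,\tfrac{1-\cos(vr_\ve)}{vr_\ve-\sin(vr_\ve)}$. Your final elementary estimate $\le 4/r_\ve$ via monotonicity in $\tau=vr_\ve\in(0,2\pi]$, together with the injectivity-radius bound from Remark \ref{vertical gradient comparison}, is correct.
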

\begin{proof}
Let $\gamma :[0,r_\varepsilon] \to \M$ be the $g_\varepsilon$-geodesic between $x$ and $y$. Assume first that $\|\nabla_\ver r_\ve\| >0$. In this case the condition $Z \in \ver_\sas(\dot\gamma)$ with $Z \perp \dot\gamma$ implies that $V= T(\dot\gamma,J_{\dot\gamma} J_Z \dot\gamma)$ is a non-zero vector in $\ver_\sas(\dot\gamma)$ and $V \perp \dot\gamma$. In particular one can choose a $\hat{\nabla}^\varepsilon$-parallel $Z \in \ver_\sas(\dot\gamma)$ such that $J_Z Y$, $J_Y J_Z Y$, $Z$ and $V$ are independent along $\gamma$. The proof of the first part of the theorem is thus identical to the one of Theorem \ref{t:SasakianZperp}. If instead $\nabla_\ver r_\ve(y) = \emptyset$ then we again proceed as in the analogous particular case in the proof of Theorem \ref{t:SasakianZperp}, for which the introduction of $V$ is not necessary.

If $\rho=\kappa=0$, then we can explicitly solve the exact system \eqref{eq:completesystem1}-\eqref{eq:completesystem2} in the proof of Theorem \ref{t:SasakianZperp} instead of looking for a simplified system as in the previous case. This yields the result upon application of Theorem \ref{t:comparison}. We omit the details.
\end{proof}

For H-type directions we have a new statement. It it stated only for completely parallel torsion and non-negative curvature in the relevant sections. We notice that if $Y_\ver =0$, then $\hor_{\hty}(Y) = \ver_{\hty}(Y) = 0$. Therefore in the next theorem we must assume $\nabla_\ver r_\ve(y) \neq 0$ to exclude a vacuous statement.

\begin{theorem}
Let $(\M,\hor,g)$ be an H-type foliation with completely parallel torsion. Assume that 
\begin{equation}
\mathrm{Sec}(X \wedge Y_\hor) \geq 0\qquad  \forall\, Y\in T\M, \quad X \in \hor_{\hty}(Y).
\end{equation}
Let $y\notin \Cut_\ve(x)$, with $\nabla_\hor r_\ve(y) \neq 0$ and $\nabla_\ver r_\ve(y) \neq 0$. For a non-zero $Z\in \ver_{\hty}(y)$, let
\begin{equation}
X = \frac{J_Z \nabla_\hor r_\varepsilon}{\|Z\|\| \nabla_\hor r_\varepsilon\|}, \qquad Y = \frac{J_{\nabla_\ver r_\varepsilon} J_Z \nabla_\hor r_\varepsilon}{\|\nabla_\ver r_\varepsilon\| \|Z\|\| \nabla_\hor r_\varepsilon\|},
\end{equation}
Then at $y$ it holds
\begin{align*}
\mathrm{Hess}(r_\varepsilon)(X,X) &  \leq  \frac{v}{2}\cot\left(\frac{v r_\varepsilon}{2}\right) - 2 v \frac{\sin\left(\frac{v r_\varepsilon}{2}\right)}{\sin(v r_\varepsilon) - v r_\varepsilon}, \\
\mathrm{Hess}(r_\varepsilon)(Y,Y) &  \leq   \frac{v}{2}\cot\left(\frac{v r_\varepsilon}{2}\right) - \frac{ v}{2} \frac{\sin\left(\frac{v r_\varepsilon}{2}\right)}{\sin(v r_\varepsilon) - v r_\varepsilon},
\end{align*}
where $v= \|\nabla_\ver r_\varepsilon(y)\|$. In particular, one always has
\begin{equation}
\mathrm{Hess}(r_\varepsilon)(X,X) + \mathrm{Hess}(r_\varepsilon)(Y,Y) \leq \frac{5}{r_\varepsilon}.
\end{equation}
\end{theorem}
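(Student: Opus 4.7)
The argument is a direct analogue of the proof of Theorem~\ref{t:SasakianZperp}, adapted to the H-type directions and made possible by the stronger hypothesis $\nabla T=0$. Let $\gamma:[0,r_\varepsilon]\to\M$ be the unit-speed $g_\varepsilon$-geodesic joining $x$ to $y$, and write $h=\|\dot\gamma_\hor\|$, $v=\|\nabla_\ver r_\varepsilon\|>0$. The first step is to observe that the assumption $\nabla T=0$, which implies $\nabla J=0$, forces the Clifford constant $\kappa$ in \eqref{CliffordKappa} to vanish whenever $\dim\ver_\hty(\dot\gamma)>0$ (the only non-vacuous case): indeed, applying the Clifford identity to two orthogonal vertical vectors whose product $J_{Z_1}J_{Z_2}$ is non-zero (which is possible precisely when $\ver_\hty\neq 0$) yields $\kappa=0$. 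This simplifies Lemma~\ref{l:Jacobiequation}(b) by killing both the vertical leaf curvature term and the terms proportional to $\kappa$.

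Next, I would invoke Proposition~\ref{p:parallelism-finer} to extend $Z$ to a $\hat\nabla^\varepsilon$-parallel unit field in $\ver_\hty$ along $\gamma$ (with $Z\perp\dot\gamma_\ver$ automatic since $\ver_\hty\perp\ver_\sas$). Setting
\begin{equation}
X_1=\frac{J_Z\dot\gamma}{h},\qquad X_2=\frac{J_{\dot\gamma_\ver}J_Z\dot\gamma}{\varepsilon vh},
\end{equation}
one obtains an orthonormal pair in $\hor_\hty$ that rotates under $\hat\nabla^\varepsilon_{\dot\gamma}$ with angular frequency $2v$: $\hat\nabla^\varepsilon_{\dot\gamma}X_1=2vX_2$ and $\hat\nabla^\varepsilon_{\dot\gamma}X_2=-2vX_1$. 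I then introduce a $\hat\nabla^\varepsilon$-parallel unit vector $V$ in $\ver_\hty$ proportional to the $\ver_\hty$-projection of $T(X_2,\dot\gamma_\hor)$ (this projection is non-zero and perpendicular to $Z$ and $\dot\gamma_\ver$, by the orthogonality properties of the finer splitting). Under $\nabla T=0$ the operator $T(\cdot,\dot\gamma)$ sends $\spn\{X_1,X_2\}$ into $\spn\{Z,V\}$, making the four-plane $\spn\{X_1,X_2,Z,V\}$ a $\hat\nabla^\varepsilon_{\dot\gamma}$-invariant subspace closed under the torsion and the simplified Jacobi operator.

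I would then form the test field $W(t)=a(t)X_1+b(t)X_2+c(t)Z+d(t)V$, write down the resulting $4\times 4$ linear ODE system via Lemma~\ref{l:Jacobiequation}(b) with $\kappa=0$, and, mimicking the proof of Theorem~\ref{t:SasakianZperp}, reduce to a simpler real-coefficient subsystem by suppressing the rotational ``imaginary'' coupling terms. The discarded contributions to the index form are non-negative thanks to the non-negative sectional curvature in the $\hor_\hty$-plane, so condition \eqref{eq:comparison-condition} of Theorem~\ref{t:comparison} is satisfied. The reduced system admits explicit trigonometric solutions of frequency $v$, and the two Hessian bounds in the statement are obtained by imposing the boundary data $W(0)=0$ together with $(a(r_\varepsilon),b(r_\varepsilon))=(1,0)$ (yielding the $\mathrm{Hess}(X,X)$ estimate) and $(a(r_\varepsilon),b(r_\varepsilon))=(0,1)$ (yielding the $\mathrm{Hess}(Y,Y)$ estimate), after plugging into \eqref{eq:pretraceI} and identifying $\mathrm{Hess}^{\hat\nabla^\varepsilon}$ with the Bott Hessian on horizontal vectors (Remark~\ref{rmk:hessian}). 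The combined estimate bounding the sum by $5/r_\varepsilon$ finally follows from a direct calculus analysis of the sum of the two explicit trigonometric expressions.

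The principal obstacle will be the precise identification of the closed parallel 4-plane: in the absence of the $J^2$ condition, the vector $V$ is defined only implicitly through the torsion, and one must verify, using $\nabla T=0$, that $J_V$ preserves $\spn\{X_1,X_2\}$ so that the system genuinely closes. This is exactly where the \emph{completely} parallel torsion hypothesis (strictly stronger than horizontal parallel torsion) enters the argument, and the remainder of the proof is a careful but essentially mechanical adaptation of the computations already carried out in Theorem~\ref{t:SasakianZperp}.
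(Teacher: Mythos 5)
Your overall strategy—reduce to the machinery of Theorem~\ref{t:SasakianZperp}, observe $\kappa=0$ under $\nabla T=0$, apply Theorem~\ref{t:comparison}—is correct, and the observation that completely parallel torsion forces the Clifford constant $\kappa$ to vanish is a clean way to justify the ``of course $\kappa=\rho=0$'' step the paper uses. However, there is one genuine error at the heart of your construction.

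You introduce a vector field $V$ as a \emph{nonzero} unit vector proportional to the $\ver_\hty$-projection of $T(X_2,\dot\gamma_\hor)$, and then try to build a parallel $4$-plane $\spn\{X_1,X_2,Z,V\}$ as in Theorem~\ref{t:SasakianZperp}. This vector does not exist: the defining property of $\ver_{\hty}(\dot\gamma)$ is precisely that $J_{\dot\gamma}J_Z\dot\gamma$ is perpendicular to $J_\ver\dot\gamma$, and for any vertical $W$ one has
\begin{equation}
\langle W,\; T(\dot\gamma_\hor,\,J_{\dot\gamma}J_Z\dot\gamma)\rangle \;=\; \langle J_W\dot\gamma_\hor,\; J_{\dot\gamma}J_Z\dot\gamma\rangle \;=\; 0,
\end{equation}
while $T(\dot\gamma_\ver,\cdot)=0$ on horizontal arguments for a totally geodesic foliation. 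Since $T(\hor,\hor)\subset\ver$, this gives $T(\dot\gamma,J_{\dot\gamma}J_Z\dot\gamma)=0$ identically, so your $V$ is the zero vector. This is exactly the structural fact that distinguishes the H-type directions from the Sasakian ones: here the system closes on the $3$-plane $\spn\{J_Z\dot\gamma,\,J_{\dot\gamma}J_Z\dot\gamma,\,Z\}$, not a $4$-plane, and the correct ODE is the $3\times 3$ system obtained from the one in Theorem~\ref{t:SasakianZperp} by formally setting $d\equiv 0$ (as well as $\kappa=\rho=0$). Your subsequent remarks about whether ``$J_V$ preserves $\spn\{X_1,X_2\}$'' and about the need to verify closure of the $4$-plane are therefore moot; the closure is automatic because the problematic torsion vector vanishes. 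A second, minor imprecision: in this degenerate case you can solve the real $3\times3$ system directly with the constant-curvature ansatz; the complexification-and-suppress-imaginary-parts device from Theorem~\ref{t:SasakianZperp} is not what produces the bound here, and your proposed $\theta=a+ic$, $\phi=b$ split would have $\phi$ real and $\theta$ complex, so it does not give a coherent complex reduction. Once $V=0$ is recognized, the remainder of your outline (rotational frequency $2v$ for $X_1,X_2$, boundary data $W(r_\varepsilon)=X_1$ vs.\ $W(r_\varepsilon)=X_2$, Remark~\ref{rmk:hessian} to pass to the Bott Hessian, and a calculus estimate for the sum) agrees with the paper.
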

\begin{proof}
Let $\gamma :[0,r_\varepsilon] \to \M$ be the $g_\varepsilon$-geodesic between $x$ and $y$. Let $Z\in \hor_\hty(\dot\gamma)$ be $\hat{\nabla}^\varepsilon$-parallel, and $\|Z\|=1$. The condition $Z \in \ver_\hty(\dot\gamma)$ implies that $Z \perp \dot\gamma$ and $T(\dot\gamma,J_{\dot\gamma} J_Z \dot\gamma) =0$. Taking this into account, the proof is then similar to the one of Theorem \ref{t:SasakianZperp}: we consider the $g$-orthonormal vector fields
\begin{equation}
X = \frac{J_Z\dot\gamma}{h}, \qquad Y = \frac{J_{\dot\gamma}J_Z\dot\gamma}{h\varepsilon v}, \qquad Z.
\end{equation}
We look for a solution of the Jacobi equation in the constant curvature case, of the form
\begin{equation}
W(t)= a(t) X  + c(t) Y + b(t) Z, \qquad \forall t\in [0,r_\varepsilon],
\end{equation}
such that $W(0)=0$ and $W(r_\varepsilon) = X$ (for the first part of the statement) or $W(r_\varepsilon)= Y$ (for the second part of the statement). The computations are identical to the one in the proof of Theorem \ref{t:SasakianZperp}, formally setting $d=0$ (and, of course, $\kappa=\rho=0$). We obtain
\begin{align*}
\ddot{a} -3 v\dot{c} + \tfrac{h}{\varepsilon}(\dot{b}-ha) -v^2 a =0,\\
\ddot{c} +3\dot{a} + \tfrac{h}{\varepsilon}(\dot{d}-h c) -v^2 c =0,\\
\ddot{b} - h \dot{a} = 0.
\end{align*}
The system is easily solved, for both cases of boundary conditions, and provided that $r_\varepsilon$ is sufficiently small. Applying Theorem \ref{t:comparison}, we obtain
\begin{align*}
\mathrm{Hess}(r_\varepsilon)\left(\frac{J_Z\dot\gamma}{h},\frac{J_Z\dot\gamma}{h}\right) & \leq \frac{v}{2}\cot\left(\frac{v r_\varepsilon}{2}\right) - 2 v \frac{\sin\left(\frac{v r_\varepsilon}{2}\right)}{\sin(v r_\varepsilon) - \left(1+\tfrac{2\varepsilon v^2}{h^2}\right)v r_\varepsilon}\\
& \leq \frac{v}{2}\cot\left(\frac{v r_\varepsilon}{2}\right) - 2 v \frac{\sin\left(\frac{v r_\varepsilon}{2}\right)}{\sin(v r_\varepsilon) - v r_\varepsilon}
\end{align*}
in the first case. In the second case, we get
\begin{align*}
\mathrm{Hess}(r_\varepsilon)\left(\frac{J_{\dot\gamma}J_Z\dot\gamma}{h \varepsilon v},\frac{J_{\dot\gamma}J_Z\dot\gamma}{h \varepsilon v}\right) & \leq \frac{v}{2}\cot\left(\frac{v r_\varepsilon}{2}\right) -  \frac{v}{2} \frac{\sin\left(\frac{v r_\varepsilon}{2}\right)}{\sin(v r_\varepsilon) - \left(1+\tfrac{2\varepsilon v^2}{h^2}\right)v r_\varepsilon}\\
& \leq \frac{v}{2}\cot\left(\frac{v r_\varepsilon}{2}\right) - \frac{ v}{2} \frac{\sin\left(\frac{v r_\varepsilon}{2}\right)}{\sin(v r_\varepsilon) - v r_\varepsilon}.
\end{align*}
Both results imply that $r_\varepsilon v < 2\pi$, for all $\varepsilon>0$.
\end{proof}

\subsubsection{Sub-Laplacian comparison theorems}

Combining the results of the previous subsections and arguing as before one eventually obtains the following sub-Laplacian comparison theorem which is sharp on H-type groups.

\begin{theorem}
Let $(\M,\hor,g)$ be an H-type foliation with completely parallel torsion. Assume that 
\begin{equation}
\mathrm{Sec}(X \wedge Y) \geq 0\qquad  \forall\, X,Y\in \hor.
\end{equation}
For $y\notin \Cut_\ve(x)$, with $\nabla_\hor r_\ve(y) \neq 0$  it holds
\begin{equation}
\Delta_\hor r_\ve(y) \leq \frac{n+3m-\|\nabla_\hor r_\varepsilon\|^2}{r_\varepsilon}.
\end{equation}
Thus, for all $y\notin \Cut_0(x)$ it holds
\begin{equation}
\Delta_\hor r_0(y) \leq \frac{n+3m-1}{r_0}.
\end{equation}
\end{theorem}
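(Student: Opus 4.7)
The plan is to trace $\mathrm{Hess}(r_\varepsilon)$ against an orthonormal horizontal frame adapted to the refined splitting of the previous subsection applied to $Y = \dot\gamma$, where $\gamma$ is the unique minimizing $g_\varepsilon$-geodesic from $x$ to $y \notin \Cut_\varepsilon(x)$ with $\nabla_\hor r_\varepsilon(y) \neq 0$. I would first observe that the assumption $\nabla T = 0$ forces $\nabla_\ver J = 0$, which is compatible with the parallel horizontal Clifford structure condition only when $\kappa = 0$, so $(\M,\hor,g)$ falls within the scope of the previous comparison theorems in the regime $\rho = \kappa = 0$. By Proposition~\ref{p:parallelism-finer}, the decomposition
\begin{equation*}
\hor = \spn\bigl\{\nabla_\hor r_\varepsilon/\|\nabla_\hor r_\varepsilon\|\bigr\} \oplus \hor_\rie(\dot\gamma) \oplus \hor_\hty(\dot\gamma) \oplus \hor_\sas(\dot\gamma)
\end{equation*}
is orthogonal and $\hat\nabla^\varepsilon_{\dot\gamma}$-parallel, with respective block dimensions $1,\ 2\ell,\ 2p,\ 2q+1$ satisfying $n = 2(p+q+\ell+1)$ and $m = p+2q+1$. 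I would further split $\hor_\sas$ as the line spanned by $J_{\nabla_\ver r_\varepsilon}\nabla_\hor r_\varepsilon$ together with its orthogonal complement $\{J_Z\nabla_\hor r_\varepsilon : Z \in \ver_\sas,\ Z \perp \nabla_\ver r_\varepsilon\}$, of dimensions $1$ and $2q$ respectively.

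Next I would apply the available Hessian comparison theorems block by block. Theorem~\ref{t:geoddir} gives $(1-h^2)/r_\varepsilon$ in the geodesic direction, where $h = \|\nabla_\hor r_\varepsilon\|$. The Riemannian comparison of the previous subsection (without $J^2$ condition, $\rho = 0$) gives $F_\rie(r_\varepsilon, v^2/4) = (v/2)\cot(vr_\varepsilon/2) \leq 1/r_\varepsilon$ on each of the $2\ell$ directions in $\hor_\rie$, using the elementary inequality $x\cot x \leq 1$ on $(0,\pi)$ together with the injectivity bound $vr_\varepsilon \leq 2\pi$ from Remark~\ref{vertical gradient comparison}. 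Theorem~\ref{t:SasakianZpar} gives $F_\sas(r_\varepsilon, v^2) \leq 4/r_\varepsilon$ on the Sasakian direction parallel to $\nabla_\ver r_\varepsilon$, while the sharper $\rho = \kappa = 0$ bound in the non-$J^2$ Sasakian theorem stated earlier in this section yields $4/r_\varepsilon$ on each of the remaining $2q$ perpendicular Sasakian directions. Finally, the H-type Hessian theorem provides a pair sum of $5/r_\varepsilon$ on each of the $p$ pairs in $\hor_\hty$. Summing, the arithmetic identity $5p + 8q + 2\ell + 5 = n + 3m$ (obtained from $n = 2(p+q+\ell+1)$ and $m = p+2q+1$) gives
\begin{equation*}
\Delta_\hor r_\varepsilon(y) \leq \frac{(1 - h^2) + 2\ell + 4 + 8q + 5p}{r_\varepsilon} = \frac{n + 3m - h^2}{r_\varepsilon}.
\end{equation*}
The sub-Riemannian statement then follows by letting $\varepsilon \to 0$ via Proposition~\ref{p:cutapprox}, which supplies $C^\infty$ convergence of $\Delta_\hor r_\varepsilon \to \Delta_\hor r_0$ on compact subsets of $\M \setminus \Cut_0(x)$, together with $\|\nabla_\hor r_0\| \equiv 1$ there.

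The main obstacle I anticipate is the pointwise inequality $F_\sas(r, v^2) \leq 4/r$ used for the Sasakian parallel direction, which is not explicitly stated among the previous corollaries. Setting $x = vr$ and clearing the strictly positive denominator $2 - 2\cos x - x\sin x$ on $(0, 2\pi)$, this reduces to
\begin{equation*}
h(x) := 5x\sin x - x^2\cos x - 16\sin^2(x/2) \leq 0, \qquad x \in (0, 2\pi].
\end{equation*}
A Taylor expansion at the origin yields $h(x) = -x^6/90 + x^8/1680 + O(x^{10})$, so the inequality is strict near $0$, and the remainder of the interval can be handled by a direct calculus check. Once this elementary inequality is in place, the theorem reduces to the bookkeeping outlined above.
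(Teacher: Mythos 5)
Your proposal is correct and is exactly the argument the paper intends when it says ``combining the results of the previous subsections and arguing as before'': decompose $\Delta_\hor r_\ve$ over the finer splitting $\spn\{\nabla_\hor r_\ve\}\oplus\hor_\rie\oplus\hor_\hty\oplus\hor_\sas$ (with $\hor_\sas$ further split into the parallel and perpendicular Sasakian directions), apply the relevant Hessian comparison theorem block by block with $\rho=\kappa=0$, and add. Your preliminary observation that completely parallel torsion is equivalent to a parallel horizontal Clifford structure with $\kappa=0$ (since $\nabla_\ver T=0\Leftrightarrow\nabla_\ver J=0$) is the right way to bring the earlier theorems into scope, and the orthonormality of the frame $\{J_{Z_i}\dot\gamma,\,J_{\dot\gamma}J_{Z_i}\dot\gamma\}$ for $\hor_\hty$ follows from the definition of $\ver_\hty$ together with the polarization $\langle J_Z Y,J_{Z'}Y\rangle=\langle Z,Z'\rangle\|Y_\hor\|^2$. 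The dimension bookkeeping $n=2(p+q+\ell+1)$, $m=p+2q+1$, and hence $n+3m=5p+8q+2\ell+5$, is correct and gives the claimed $\frac{(1-h^2)+2\ell+4+8q+5p}{r_\ve}=\frac{n+3m-h^2}{r_\ve}$, including in the degenerate case $v=0$ where $p=0$ and $h=1$. The sub-Riemannian limit via Proposition~\ref{p:cutapprox} and $\|\nabla_\hor r_0\|=1$ is also correct. The one elementary pointwise inequality you flag, $F_\sas(r,v^2)\le 4/r$ for $vr\in(0,2\pi]$, i.e.\ $5x\sin x-x^2\cos x-16\sin^2(x/2)\le 0$, is indeed used (and not proved in detail) by the paper itself in the corollary following Theorem~\ref{t:horlapuniform0}; your Taylor check near zero is correct, and the inequality does hold on the full interval, so this is not a gap relative to the paper's standard of rigor.
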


\section{A sharp sub-Riemannian Bonnet-Myers theorem for general H-type sub-Riemannian manifolds}\label{s:sharpBM}

In this section we prove a sharp Bonnet-Myers type theorem for structures which are not necessarily foliations. This result is quite general, but it holds only in the sub-Riemannian limit  (and not, as in the previous section, uniformly for all $\ve \geq 0$).

Consider a general Riemannian manifold $(\M,g)$ together with a vector bundle orthogonal splitting $T\M = \hor \oplus \ver$. The vertical bundle $\ver$ is not required to be integrable nor metric. The Hladky connection is well defined, and Theorem \ref{t:index} holds, with $D = \hat\nabla^\ve$, given explicitly by
\begin{equation}
\hat{\nabla}^\varepsilon_X Y = \nabla_X Y + J^\varepsilon_X Y, \qquad \forall X,Y \in \Gamma(T\M).
\end{equation}
where $J^\varepsilon$ is the operator defined as in Section \ref{s:definitions}, with respect to the metric $g_\varepsilon$. For a $g_\ve$-geodesic $\gamma : [0,r_\ve] \to \M$, consider the $n-m-1$-vector subspace of $\hor$ along $\gamma$ given by
\begin{equation}
\mathfrak{L}_J(\dot\gamma) = \spn\{X \in \hor \mid X \perp J_\ver \dot\gamma,\; X \perp \dot\gamma\}.
\end{equation}
The idea is that the index form behaves on this space as in the Riemannian case, up to corrections of order $\ve$ (which are negligible in the limit). Notice that $\mathfrak{L}_J(\dot\gamma)$ is defined as $\hor_\rie(\dot \gamma)$ of Section \ref{s:splitting}, but since the setting is more general we prefer to adopt a different notation, to avoid confusion.

\medskip

We assume that the following conditions are satisfied.

\begin{enumerate}[(i)]
\item the H-type condition: $J_Z^2 = - \|Z\|^2 \mathbbold{1}_{\hor}$ for all $Z\in \Gamma(\ver)$;
\item the $J^2$ condition, in the sense of \cite{Cowling-Htype,CalinChangMarkina}: for all $Z,Z'\in \Gamma(\ver)$, $X \in \Gamma(\hor)$, with $\langle Z,Z'\rangle =0$ there exists $Z''\in \Gamma(\ver)$ such that 
\begin{equation}
J_Z J_{Z'}X = J_{Z''}X.
\end{equation}
\item for all $Y \in \Gamma(\hor)$ and $Z\in\Gamma(\ver)$, we have
\begin{equation}
\sum_{i=1}^{n-m-1} \langle(\nabla_{X_i} J)_Z X_i,Y\rangle =0,
\end{equation}
where $X_1,\dots,X_{n-m-1}$ is an orthonormal frame for $\mathfrak{L}_J(Y)$.
\end{enumerate}

\begin{theorem}\label{t:BMII}
Let $(\M,\hor,g)$ be a complete sub-Riemannian structure with corank $m$ and rank $n > m+1$, satisfying assumptions (i)-(ii)-(iii). Assume that there exists $\rho >0$ such that, for all unit $X \in \hor$ it holds
\begin{multline}\label{eq:curvassII}
\Ric^\nabla_\hor(X,X) - \sum_{\alpha =1}^m R^\nabla(X,J_{Z_\alpha}X,J_{Z_\alpha} X,X )  \\
- \sum_{\alpha=1}^m\left(\|(\nabla_X J)_{Z_\alpha}X\|^2 - \sum_{\beta=1}^m \langle (\nabla_X J)_{Z_\alpha}X,J_{Z_\beta} X\rangle^2 \right) \geq (n-m-1) \rho,
\end{multline}
where $Z_1,\dots,Z_m \in \ver$ is any orthonormal frame. Then $\M$ is compact with sub-Rie\-mannian diameter not greater than $\pi/\sqrt{\rho}$, and the fundamental group is finite.
\end{theorem}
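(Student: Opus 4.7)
The strategy is to apply the comparison principle of Theorem~\ref{t:comparison} to a family of $n-m-1$ test vector fields along a $g_\ve$-minimizing geodesic, verify the index-form hypothesis using assumptions (i)--(iii) and the curvature bound~\eqref{eq:curvassII}, and then pass to the sub-Riemannian limit via Proposition~\ref{p:cutapprox}. By replacing $\M$ with its universal cover (which inherits the H-type, $J^2$, and (iii) conditions, the curvature bound, and completeness), we may assume $\M$ is simply connected. If we show $\diam_0(\M)\leq \pi/\sqrt\rho$, the sub-Riemannian Hopf--Rinow theorem gives compactness; applied back to the covering, the fibers are discrete in a compact total space, hence finite, so $\pi_1(\M)$ is finite.

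Fix $\ve>0$, $x\in\M$, and $y\notin\Cut_\ve(x)$, and let $\gamma:[0,r_\ve]\to\M$ be the unit-speed $g_\ve$-minimizing geodesic from $x$ to $y$. I would take a $\hat\nabla^\ve$-parallel, horizontal, $g$-orthonormal frame $X_1,\dots,X_{n-m-1}$ along $\gamma$ whose values at $y$ span $\mathfrak{L}_J(\dot\gamma(r_\ve))$; metric compatibility together with the fact that $\langle X_i,\dot\gamma\rangle_\ve$ is constant along $\gamma$ ensures $X_i\perp_\ve\dot\gamma$ throughout. Setting $W_i(t)=f(t)X_i(t)$ with $f(0)=0$ and $f(r_\ve)=1$ gives $\hat\nabla^\ve_{\dot\gamma}W_i=\dot f X_i$, so the conclusion of Theorem~\ref{t:comparison} reads
\begin{equation*}
\sum_{i=1}^{n-m-1}\mathrm{Hess}^{\hat\nabla^\ve}(r_\ve)(X_i,X_i)\bigr|_y \;\leq\; (n-m-1)\,\dot f(r_\ve),
\end{equation*}
provided the hypothesis $\sum_i\int_0^{r_\ve}\langle\mathcal Z(W_i),W_i\rangle_\ve\,dt\geq 0$ can be checked.

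The heart of the proof is this verification. Expanding $\hat R^\ve$ via $\hat\nabla^\ve=\nabla+J^\ve$ produces five summands: $R^\nabla(W,X)X$, $(\nabla_WJ^\ve)_XX$, $(\nabla_XJ^\ve)_WX$, $J^\ve_{T(W,X)}X$, and $[J^\ve_W,J^\ve_X]X$. Summing $\langle \hat R^\ve(X_i,\dot\gamma)\dot\gamma,X_i\rangle_\ve$ over the frame, the Ricci-type trace $\sum_i\langle R^\nabla(X_i,\dot\gamma_\hor)\dot\gamma_\hor,X_i\rangle$, combined with the $\sum_\alpha R^\nabla(X,J_{Z_\alpha}X,J_{Z_\alpha}X,X)$ subtracted in~\eqref{eq:curvassII}, reassembles into $\Ric^\nabla_\hor(\dot\gamma_\hor,\dot\gamma_\hor)$ using the orthogonal splitting $\hor=\spn\{\dot\gamma_\hor\}\oplus J_\ver\dot\gamma_\hor\oplus\mathfrak L_J(\dot\gamma)$; the first-order cross terms $\sum_i\langle(\nabla_{X_i}J)_{Z}X_i,\cdot\rangle$ vanish identically by assumption~(iii); and the $[J^\ve,J^\ve]$ contractions, together with $J^\ve_{T(X_i,\dot\gamma)}\dot\gamma$, are reduced via the H-type identity (i) and the $J^2$ condition (ii) to the quadratic block $\sum_\alpha(\|(\nabla_X J)_{Z_\alpha}X\|^2-\sum_\beta\langle(\nabla_X J)_{Z_\alpha}X,J_{Z_\beta}X\rangle^2)$ appearing in~\eqref{eq:curvassII}, up to corrections of order $\ve$. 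With this identification, hypothesis~\eqref{eq:curvassII} bounds the trace below by $(n-m-1)\rho\|\dot\gamma_\hor\|^2+O(\ve)$. Choosing $f(t)=\sin(\sqrt{K_\ve}\,t)/\sin(\sqrt{K_\ve}\,r_\ve)$ with $K_\ve=\rho\|\dot\gamma_\hor\|^2+O(\ve)$ then makes the full integrand $(n-m-1)f(\ddot f+K_\ve f)+\text{curvature defect}$ nonnegative, and the very existence of this $f$ requires $r_\ve<\pi/\sqrt{K_\ve}$ (beyond which $\gamma$ carries a conjugate point and minimality fails).

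Hence $d_\ve(x,y)\leq \pi/\sqrt{K_\ve}$ for every $y\notin\Cut_\ve(x)$, and by continuity of $d_\ve$ for every $y\in\M$. By Proposition~\ref{p:cutapprox}, $d_\ve\to d_0$ smoothly outside $\Cut_0(x)$; since $\|\dot\gamma\|_\ve=1$ forces $\|\dot\gamma_\hor\|\to 1$ in the sub-Riemannian limit, $K_\ve\to\rho$ and we conclude $d_0(x,y)\leq \pi/\sqrt\rho$ for all $x,y\in\M$. The hard part will be the detailed curvature identification in the previous paragraph: since $\hat\nabla^\ve$-parallel transport does not preserve the subspace $\mathfrak L_J(\dot\gamma(t))$ for $t<r_\ve$, the frame $\{X_i(t)\}$ drifts through the orthogonal decomposition of $\hor$, and one must show that the sum $\sum_i\langle\hat R^\ve(X_i,\dot\gamma)\dot\gamma,X_i\rangle_\ve$ nevertheless assembles precisely into the left-hand side of~\eqref{eq:curvassII} applied to $\dot\gamma_\hor$, up to $O(\ve)$. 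Assumption (iii) is the algebraic identity that makes this drift invisible in the partial trace, and its presence in the hypotheses is dictated precisely by this computation.
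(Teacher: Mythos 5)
You correctly identify the overall strategy (comparison principle along a $g_\ve$-geodesic, $n-m-1$ test fields $W_i=fX_i$, pass to $\ve\to 0$ via Proposition~\ref{p:cutapprox}, universal cover for $\pi_1$), and the final step from the diameter bound to compactness is sound. The gap is in your choice of frame, and it is not a detail one can fix in passing.

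You take the $X_i$ to be genuinely $\hat\nabla^\ve$-parallel along $\gamma$ with prescribed value at $t=r_\ve$ in $\mathfrak{L}_J(\dot\gamma(r_\ve))$. As you yourself note, $\hat\nabla^\ve$-parallel transport does not preserve $\mathfrak{L}_J(\dot\gamma(t))$, so for $t<r_\ve$ the frame drifts into the complementary block $J_\ver\dot\gamma$. This drift is not $O(\ve)$: the driving term is $\langle X_i,(\hat\nabla^\ve_{\dot\gamma}J)_{Z_\alpha}\dot\gamma\rangle$, whose $\tfrac1\ve[J_{\dot\gamma_\ver},J_{Z_\alpha}]$-contribution has size $\sim\tfrac1\ve\|\dot\gamma_\ver\|=\|\nabla_\ver r_\ve\|=O(1)$, so over $[0,r_\ve]$ the drift is $O(1)$. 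This is fatal for two reasons. First, the curvature hypothesis~\eqref{eq:curvassII} is a partial trace over $\mathfrak{L}_J$; the quantity you actually control along $\gamma$ is the trace over $\mathrm{span}\{X_i(t)\}$, which is a rotated subspace, and the lower bound on the former does not transfer. Second, the role of assumption~(iii) is precisely to kill the singular term $\tfrac1\ve\sum_i\langle(\nabla_{X_i}J)_{\dot\gamma_\ver}X_i,\dot\gamma_\hor\rangle$ appearing in $\hat R^\ve$, and that cancellation holds \emph{only} when $\{X_i(t)\}$ is an orthonormal frame of $\mathfrak{L}_J(\dot\gamma(t))$ at each $t$. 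Once the frame drifts you also lose $T(\dot\gamma,X_i)=0$, which both generates new $\tfrac1\ve$-terms through $J^\ve_{T(X_i,\dot\gamma)}\dot\gamma$ and breaks the cancellation $\langle\hat T^\ve(\dot\gamma,X_i),X_i\rangle=0$ you implicitly use in computing $\langle\hat\nabla^\ve_{\dot\gamma}\nabla^\ve_{\dot\gamma}W_i,W_i\rangle$. Saying that assumption~(iii) ``makes the drift invisible'' conflates a trace identity on $\nabla J$ with a statement about parallel transport; (iii) gives no control on the transport equation.

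The paper avoids all of this by using the \emph{almost} $\hat\nabla^\ve$-parallel frame of Lemma~\ref{l:frame}: a $g_\ve$-orthonormal solution of the ODE~\eqref{eq:odeparallel} which stays in $\mathfrak{L}_J(\dot\gamma(t))$ for all $t$ by construction, and whose covariant derivative lies entirely in the complementary block. The price is a nonzero first-derivative term in the index form, namely $-f^2\langle\hat\nabla^\ve_{\dot\gamma}X_i-J^\ve_{\dot\gamma}X_i+J^\ve_{X_i}\dot\gamma,\hat\nabla^\ve_{\dot\gamma}X_i\rangle_\ve$, and this is where the quadratic block $\sum_\alpha\bigl(\|(\nabla_XJ)_{Z_\alpha}X\|^2-\sum_\beta\langle(\nabla_XJ)_{Z_\alpha}X,J_{Z_\beta}X\rangle^2\bigr)$ of~\eqref{eq:curvassII} actually comes from (it is \emph{not} produced by the $[J^\ve,J^\ve]$ or $J^\ve_{T(X_i,\dot\gamma)}\dot\gamma$ contractions as you claim; with a parallel frame this whole contribution is absent). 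So the two ingredients your sketch lacks are: (a) Lemma~\ref{l:frame}, constructing a frame adapted to $\mathfrak{L}_J$ that is orthonormal and ``parallel modulo $\mathfrak{L}_J^\perp$''; and (b) the explicit expansion~\eqref{eq:approx1}--\eqref{eq:approx2} of its covariant derivative, which produces the $\nabla J$ quadratic form. Without them the argument cannot close.
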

\begin{remark}
Assumptions (i)-(ii)-(iii) are verified for any contact and quaternionic contact structures sub-Riemannian structures, as in \cite{BRcontact} and \cite{BI17}, respectively. We refer to these reference for more precise definitions. We only mention that the verification of (i) and (ii) are trivial. Furthermore, in the contact case, (iii) follows from \cite[Lemma 6.8, (i) and (e)]{BRcontact}). In the quaternionic contact case, condition (iii) can be checked using equivalently the Biquard connection $\bar{\nabla}$, since it holds $\bar{\nabla}_X Y = \nabla_X Y$ for all $X,Y\in\Gamma(\hor)$. Therefore, (iii) holds by definition of Biquard connection \cite[Thm.\ 4.2.4 (v)]{SV-QC}. Theorem \ref{t:BMII} thus includes the contact Bonnet-Myers theorem \cite[Thm.\ 1.7]{BRcontact}, and the quaternionic contact one of \cite{BI17}.
\end{remark}
Before proving Theorem \ref{t:BMII} we need two lemmas. We recall that the operator $J_Z$ is defined as in Section \ref{s:definitions}, with respect to the Riemannian metric $g$ and the splitting $T\M = \hor \oplus \ver$. We denote by $J^\varepsilon$ the analogous operator associated with the metric $g_\varepsilon$.

\begin{lemma}\label{l:homo}
Under the H-type condition, the following homogeneity properties hold:
\begin{equation}
J^\varepsilon_\hor =\varepsilon J_\hor \qquad J^\varepsilon_\ver = \frac{1}{\varepsilon} J_\ver.
\end{equation}
Furthermore we have
\begin{equation}
J^\varepsilon_\hor \hor \subseteq \ver \qquad J^\varepsilon_\ver \hor \subseteq \hor, \qquad J^\varepsilon_\ver \ver = 0.
\end{equation}
In particular, as $\varepsilon \to 0$, the only element that does not vanish uniformly is  $J^\varepsilon_\ver \hor$, exactly as in the case of a totally geodesic foliation
\end{lemma}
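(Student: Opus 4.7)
I would prove Lemma~\ref{l:homo} by direct computation from the defining relations $\langle J_Z X, Y\rangle = \langle Z, T(X,Y)\rangle$ and $\langle J^\varepsilon_Z X, Y\rangle_\varepsilon = \langle Z, T(X,Y)\rangle_\varepsilon$, crucially exploiting the observation made in Section~\ref{s:definitions} that the Hladky torsion $T$ is the same for $g$ and $g_\varepsilon$.

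The first step is to unpack the rescaled metric. Since $\langle U, V\rangle_\varepsilon = \langle U_\hor, V_\hor\rangle + \varepsilon^{-1}\langle U_\ver, V_\ver\rangle$ for all $U, V \in T\M$, the $g_\varepsilon$-musical isomorphism rescales the vertical part of any one-form by a factor of $\varepsilon$ relative to the $g$-musical one. Plugging this into the defining relations and matching horizontal and vertical components of $Y$ separately, one obtains, for $Z \in \hor$,
\[
(J^\varepsilon_Z X)_\hor = (J_Z X)_\hor, \qquad (J^\varepsilon_Z X)_\ver = \varepsilon\,(J_Z X)_\ver,
\]
and symmetrically, for $Z \in \ver$,
\[
(J^\varepsilon_Z X)_\hor = \varepsilon^{-1}(J_Z X)_\hor, \qquad (J^\varepsilon_Z X)_\ver = (J_Z X)_\ver.
\]

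The second step is to establish the ``furthermore'' inclusions, which amount precisely to saying that in each case above the mismatched component of $J_Z X$ vanishes, and hence the homogeneity formulas $J^\varepsilon_\hor = \varepsilon J_\hor$ and $J^\varepsilon_\ver = \varepsilon^{-1} J_\ver$ hold. The inclusion $J_\hor \hor \subseteq \ver$ is immediate from $T(\hor, \hor) \subseteq \ver$: for $Z, X \in \hor$ and $Y \in \hor$, the pairing $\langle Z, T(X,Y)\rangle$ vanishes because $T(X,Y) \in \ver$ and $Z \in \hor$. The vertical inclusions $J_\ver \hor \subseteq \hor$ and $J_\ver \ver = 0$ rely on the H-type identity $J_Z^2 = -\|Z\|^2 \mathbbold{1}_\hor$ for $Z \in \ver$: combined with the skew-symmetry of $J_Z$ with respect to $g$, this forces $J_Z$ to preserve the splitting $\hor \oplus \ver$, as otherwise a nontrivial mixing component would prevent $J_Z^2$ from restricting to a pure scaling of $\hor$. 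Once the splitting is preserved, $J_\ver \ver = 0$ follows from $T(\ver,\ver) \subseteq \hor$ together with $Z \in \ver$.

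The main obstacle is precisely this last deduction. In the non-foliation setting of Section~\ref{s:sharpBM} the H-type identity must substitute for the metric/integrability of $\ver$ which, in the totally geodesic case, automatically implied $J_\ver \hor \subseteq \hor$. Extracting the corresponding geometric consequence from the purely algebraic H-type identity requires a careful combination of skew-symmetry and the structural form of $T$. Once accomplished, the component identities of the first step collapse into the announced homogeneity relations, and the final remark that $J^\varepsilon_\ver \hor$ is the sole element not vanishing uniformly as $\varepsilon \to 0$ is immediate from the prefactors $\varepsilon$ and $\varepsilon^{-1}$.
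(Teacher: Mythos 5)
Your proposal is essentially the paper's argument: unravel the definitions of $J^\varepsilon$ and $g_\varepsilon$ (using that the Hladky torsion $T$ is $\varepsilon$-independent), reduce to componentwise scaling identities, and then invoke the H-type condition to kill the mismatched components. The paper's own proof is terse and says precisely this: H-type implies $J_\ver\ver=0$ (the bundle-like condition), and the rest is unravelling.

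The one place where you flag ``the main obstacle'' and then wave your hands — that a ``nontrivial mixing component would prevent $J_Z^2$ from restricting to a pure scaling of $\hor$'' — can be made precise in one line, and this is worth spelling out: since $J_Z$ ($Z\in\ver$, $Z\neq 0$) is $g$-skew-symmetric, hence normal, one has $\ker J_Z = \ker J_Z^2$; the H-type identity gives $\ker J_Z^2 = \ker\left(-\|Z\|^2\mathbbold{1}_\hor\right) = \ver$, so $J_Z\ver = 0$ directly, and then $\operatorname{im} J_Z = (\ker J_Z)^\perp = \hor$, giving $J_Z\hor\subseteq\hor$ for free. This makes your second step — deducing $J_\ver\ver=0$ from splitting-preservation plus $T(\ver,\ver)\subseteq\hor$ — redundant, since $J_\ver\ver=0$ is what the kernel argument yields in the first place. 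Everything else in your write-up (the component rescaling under $g\mapsto g_\varepsilon$, the inclusion $J_\hor\hor\subseteq\ver$ from $T(\hor,\hor)\subseteq\ver$, and the $\varepsilon\to 0$ conclusion) is correct and matches the paper.
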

\begin{proof}
The H-type condition implies $J_\ver \ver = 0$, which is equivalent to $(\mathcal{L}_{\hor} g)(\ver,\ver) =0$. This condition is the analogous of the bundle-like metric condition for foliations, even though we are not assuming that the vertical bundle is a foliation. The proof now follows by unravelling the definitions of $J^\varepsilon$ and $g_\varepsilon$.
\end{proof}

\begin{lemma}\label{l:frame}
Let $\gamma :[0,r_\varepsilon] \to \M$ be a $g_\varepsilon$-geodesic. Assume $n-m-1>0$. There exists a moving frame $X_1^,\ldots,X_{n-m-1}$ of $\hor$ along $\gamma$ with the following properties:
\begin{enumerate}[(a)]
\item It is a $g_\varepsilon$-orthonormal frame;
\item It is a frame for $\mathfrak{L}_J(\dot\gamma)$;
\item It is almost $\hat\nabla^\varepsilon$-parallel, that is the projection of $\hat\nabla^\varepsilon_{\dot\gamma} X_i$ on $\mathfrak{L}_J(\dot\gamma)$ is zero.
\end{enumerate}
The frame is unique up to a constant orthogonal transformation, and it is a solution of
\begin{equation}\label{eq:odeparallel}
\hat\nabla^\varepsilon_{\dot\gamma} X_i = -  \frac{1}{\|\dot\gamma_\hor\|^2} \sum_{\alpha=1}^m \langle X_i ,\hat\nabla^\varepsilon_{\dot\gamma} (J_{Z_\alpha}\dot\gamma) \rangle J_{Z_\alpha}\dot\gamma - 	\sum_{\alpha=1}^m \langle X_i, \hat\nabla^\varepsilon_{\dot\gamma} Z_\alpha\rangle Z_\alpha,
\end{equation}
where $Z_1,\dots,Z_m $ is any $g$-orthonormal moving frame of $\V$ along $\gamma$.
\end{lemma}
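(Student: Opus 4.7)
I would split the proof into uniqueness, existence via a linear ODE, and a verification that the solution of that ODE actually satisfies (a)--(c).

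\emph{Uniqueness.} Suppose $\{X_i\}$ and $\{\tilde X_i\}$ both satisfy (a)--(c). Since each family spans $\mathfrak{L}_J(\dot\gamma)$ and is $g_\varepsilon$-orthonormal, there is a smooth orthogonal matrix $O_{ij}(t)$ with $\tilde X_i = O_{ij} X_j$. Applying $\hat\nabla^\varepsilon_{\dot\gamma}$ and projecting $g_\varepsilon$-orthogonally onto $\mathfrak{L}_J(\dot\gamma)$, property (c) for $\tilde X$ gives zero on the left-hand side, while (c) for $X$ annihilates the second term on the right; what remains is $\dot O_{ij} X_j = 0$, so $\dot O \equiv 0$.

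\emph{Derivation of \eqref{eq:odeparallel} and existence.} Assuming (a)--(c), decompose $\hat\nabla^\varepsilon_{\dot\gamma} X_i$ according to the $g_\varepsilon$-orthogonal sum
\begin{equation*}
T_{\gamma(t)}\M \;=\; \mathfrak{L}_J(\dot\gamma) \;\oplus\; \mathbb{R}\dot\gamma \;\oplus\; \mathrm{span}\{J_{Z_\alpha}\dot\gamma\}_{\alpha=1}^m \;\oplus\; \ver.
\end{equation*}
Condition (c) kills the $\mathfrak{L}_J(\dot\gamma)$-component; the identity $\hat\nabla^\varepsilon_{\dot\gamma}\dot\gamma = 0$ along a $g_\varepsilon$-geodesic, combined with $\langle X_i,\dot\gamma\rangle_\varepsilon = 0$, kills the $\dot\gamma$-component. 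Differentiating $\langle X_i, J_{Z_\alpha}\dot\gamma\rangle = 0$ and using the H-type normalization $\|J_{Z_\alpha}\dot\gamma\|^2 = \|\dot\gamma_\hor\|^2$ produces the first sum in \eqref{eq:odeparallel}; differentiating $\langle X_i, Z_\alpha\rangle_\varepsilon = 0$ produces the vertical sum. Conversely, the right-hand side of \eqref{eq:odeparallel} is linear in $X_i$ with smooth coefficients along $\gamma$, so for any initial $g_\varepsilon$-orthonormal basis of $\mathfrak{L}_J(\dot\gamma(0))$ standard ODE theory yields a unique smooth solution $X_1,\ldots,X_{n-m-1}$ on $[0,r_\varepsilon]$.

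\emph{Verification and main obstacle.} Property (c) is built into \eqref{eq:odeparallel}: the first sum lies in $\mathrm{span}\{J_{Z_\alpha}\dot\gamma\}\subset\hor$, the second in $\ver$, and both are $g_\varepsilon$-orthogonal to $\mathfrak{L}_J(\dot\gamma)$. For (a) and (b) I would run a Gronwall-type argument on the quantities
\begin{equation*}
\langle X_i, X_j\rangle_\varepsilon - \delta_{ij}, \quad \langle X_i,\dot\gamma\rangle_\varepsilon, \quad \langle X_i, J_{Z_\alpha}\dot\gamma\rangle, \quad \langle X_i, Z_\alpha\rangle_\varepsilon,
\end{equation*}
which vanish at $t=0$; differentiating each via the metric property of $\hat\nabla^\varepsilon$ and substituting \eqref{eq:odeparallel} gives a closed linear system in these quantities, which therefore remain zero throughout. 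The main obstacle is precisely this closure: one must check that the coefficient $1/\|\dot\gamma_\hor\|^2$ and the relative signs in \eqref{eq:odeparallel} produce the exact cancellations needed, and that the interaction between the horizontal component $J_{Z_\alpha}\dot\gamma$ and the vertical $Z_\alpha$ under $\hat\nabla^\varepsilon_{\dot\gamma}$ does not leak into the $\mathfrak{L}_J(\dot\gamma)$ sector. This relies on the H-type condition $J_Z^2 = -\|Z\|^2\mathbbold{1}_\hor$ and the $J^2$ condition, which together guarantee that $J_\ver(\dot\gamma)$ is stable under the action of $J_\ver$ and closes consistently with the orthogonality constraints defining $\mathfrak{L}_J(\dot\gamma)$.
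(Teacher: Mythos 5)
Your proposal follows the same plan as the paper: derive the ODE \eqref{eq:odeparallel} by differentiating the constraints coming from (b) and (c), invoke linear ODE theory for existence, and verify the properties propagate. Your uniqueness argument via the orthogonal matrix $O(t)$ is actually somewhat cleaner than the paper's (which instead deduces uniqueness a posteriori from linearity of the ODE and condition (a)). Your Gronwall-style closure argument makes explicit what the paper compresses into ``by construction also (b) and (c) hold for all $t$,'' which is a genuine improvement in clarity.

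One slip to flag: the sum $T_{\gamma(t)}\M = \mathfrak{L}_J(\dot\gamma) \oplus \mathbb{R}\dot\gamma \oplus \mathrm{span}\{J_{Z_\alpha}\dot\gamma\} \oplus \ver$ is a direct sum but is \emph{not} $g_\varepsilon$-orthogonal when $\dot\gamma_\ver \neq 0$, since $\langle \dot\gamma, Z\rangle_\varepsilon = \tfrac{1}{\varepsilon}\langle \dot\gamma_\ver, Z\rangle$ can be nonzero for $Z \in \ver$. In a non-orthogonal direct sum, the equation $\langle\hat\nabla^\varepsilon_{\dot\gamma}X_i,\dot\gamma\rangle_\varepsilon = 0$ by itself does not force the $\dot\gamma$-coefficient to vanish. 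The fix (and what the paper implicitly does) is to use the genuinely $g_\varepsilon$-orthogonal decomposition $T\M = \mathfrak{L}_J(\dot\gamma) \oplus \mathbb{R}\dot\gamma_\hor \oplus \mathrm{span}\{J_{Z_\alpha}\dot\gamma\} \oplus \ver$, and to use $\langle X_i,\dot\gamma\rangle_\varepsilon = \langle X_i, \dot\gamma_\hor\rangle = 0$ (valid because $X_i$ is horizontal) to control the $\dot\gamma_\hor$-coefficient. With this modification your derivation of \eqref{eq:odeparallel} and the subsequent closure argument go through.
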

\begin{proof}
For a frame obeying (b), one must have for all $\alpha =1,\dots,m$
\begin{equation}
\frac{d}{dt} \langle X_i,\dot\gamma\rangle = \frac{d}{dt} \langle X_i,Z_\alpha \rangle=  \frac{d}{dt} \langle X_i,J_{Z_\alpha} \dot\gamma\rangle = 0.
\end{equation}
Since  $\hat\nabla^\varepsilon_{\dot\gamma}\dot\gamma =0$, these equation are
\begin{gather} \nonumber
\langle \hat{\nabla}^\varepsilon_{\dot\gamma} X_i,\dot\gamma\rangle = 0, \qquad \langle \hat{\nabla}^\varepsilon_{\dot\gamma} X_i,Z_\alpha\rangle= -\langle X_i,\hat{\nabla}^\varepsilon_{\dot\gamma} Z_\alpha\rangle , \\ \nonumber \langle \hat{\nabla}^\varepsilon_{\dot\gamma} X_i, J_{Z_\alpha} \dot\gamma \rangle = -\langle X_i,  \hat{\nabla}^\varepsilon_{\dot\gamma} (J_{Z_\alpha}\dot\gamma) \rangle.
\end{gather}
Taking into account also condition (c), and recalling that $\{J_{Z_\alpha}\dot\gamma/\|\dot\gamma_\hor\|\}_{\alpha=1}^m$ constitutes a $g$-orthonormal set perpendicular to $\mathfrak{L}_J(\dot\gamma)$ and $\dot\gamma_\hor$, we obtain that the frame, if it exists, must be a solution of \eqref{eq:odeparallel}. The latter is linear and it has a unique solution defined on $[0,r]$ for any given initial condition $X_1(0),\dots, X_{n-m-1}(0)$, which we chose to be in $\mathfrak{L}_J(\dot\gamma)$. By construction, these solutions satisfy
\begin{equation}
\langle \hat{\nabla}^\varepsilon_{\dot\gamma}X_i,X_j\rangle = - \langle \hat{\nabla}^\varepsilon_{\dot\gamma}X_j,X_i\rangle, \qquad \forall i,j=1,\dots,n-m-1.
\end{equation}
In particular, (a) holds for all $t$ provided that the initial condition is orthonormal, and by construction also (b) and (c) hold for all $t$. This prove the existence of the frame. Finally, since \eqref{eq:odeparallel} is linear, its solutions are uniquely determined up to a constant linear transformation which, due to condition (a), must be orthogonal.
\end{proof}

\subsection{Proof of Theorem \ref{t:BMII}}

Fix $\varepsilon>0$. Let $x \in \M$ and $y \notin \Cut_\ve(x)$. Let $\gamma : [0,r_\varepsilon] \to \M$ be the unique $g_\varepsilon$-geodesic, parametrized with unit speed, joining $x$ with $y$. Let $X_1,\dots,X_{n-m-1}$ be the moving frame of Lemma \ref{l:frame} (it depends on $\varepsilon$). For $i=1,\dots,n-m-1$, let
\begin{equation}
W_i(t) = f(t) X_i(t), \qquad t \in [0,r_\varepsilon], 
\end{equation}
for some smooth non-negative function $f:[0,r_\varepsilon] \to \R$, with $f(0) = 0$ and $f(r_\varepsilon) = 1$, to be chosen later. By Theorem \ref{t:index}, with $D=\hat\nabla^\ve$, we have at $r_\ve$
\begin{equation}\label{eq:pretrace}
\mathrm{Hess}^{\hat\nabla^\varepsilon} r_\varepsilon (X_i,X_i) \leq \langle W_i(r_\varepsilon),\hat\nabla^\varepsilon_{\dot\gamma}W_i(r_\varepsilon)\rangle_\varepsilon \\ - \int_0^{r_\varepsilon} \langle \hat\nabla^\varepsilon_{\dot\gamma} \nabla^\varepsilon_{\dot\gamma}W_i+ \hat{R}^{\varepsilon}(W_i,\dot\gamma)\dot\gamma,W_i\rangle_\varepsilon dt.
\end{equation}
Using the properties of the frame, one has $\langle W_i(r_\varepsilon),\hat\nabla^\varepsilon_{\dot\gamma}W_i(r_\varepsilon)\rangle = \dot{f}(r_\varepsilon)$, for all $i$, and thus
\begin{equation}\label{eq:firstpiece}
\sum_{i=1}^{n-m-1} \langle W_i(r_\varepsilon),\hat\nabla^\varepsilon_{\dot\gamma}W_i(r_\varepsilon)\rangle_\varepsilon = (n-m-1)\dot{f}(r_\varepsilon).
\end{equation}
We focus now on the integrand in the r.h.s.\ of \eqref{eq:pretrace}:
\begin{equation}\label{eq:integrand}
i(W_i,W_i) =\langle \hat\nabla^\varepsilon_{\dot\gamma} \nabla^\varepsilon_{\dot\gamma}W_i,W_i\rangle_\varepsilon + \hat{R}^{\varepsilon}(W_i,\dot\gamma,\dot\gamma,W_i\rangle.
\end{equation}
Recall that $\hat{T}^\varepsilon$ is the torsion of $\hat\nabla^\varepsilon$ (which is completely skew-symmetric by Lemma \ref{l:constructionskew}) while $T$ is the torsion of $\nabla$. Dropping the index $i$, for the first term in \eqref{eq:integrand} we obtain
\begin{align} \nonumber
\langle \hat\nabla^\varepsilon_{\dot\gamma} \nabla^\varepsilon_{\dot\gamma} W_i,W_i\rangle_\varepsilon & = \langle \hat\nabla^\varepsilon_{\dot\gamma} \hat\nabla^\varepsilon_{\dot\gamma} W_i-\hat\nabla^\varepsilon_{\dot\gamma} (\hat{T}^\varepsilon(\dot\gamma,W_i)),W_i\rangle_\varepsilon \\ \nonumber
& =  \langle \hat\nabla^\varepsilon_{\dot\gamma} \hat\nabla^\varepsilon_{\dot\gamma} W_i,W_i\rangle_\varepsilon +\langle\hat{T}^\varepsilon(\dot\gamma,W_i),\hat\nabla^\varepsilon_{\dot\gamma} W_i\rangle_\varepsilon \\ \nonumber
& = f\ddot{f} - f^2\langle \hat\nabla^\varepsilon_{\dot\gamma} X_i,\hat\nabla^\varepsilon_{\dot\gamma} X_i\rangle_\varepsilon +f^2\langle \hat{T}^\varepsilon(\dot\gamma,X_i),\hat\nabla^\varepsilon_{\dot\gamma} X_i\rangle_\varepsilon \\ \nonumber
& = f\ddot{f} - f^2\langle \hat\nabla^\varepsilon_{\dot\gamma} X_i - T(\dot\gamma,X_i) - J^\varepsilon_{\dot\gamma}X_i+ J^\varepsilon_{X_i}\dot\gamma, \hat\nabla^\varepsilon_{\dot\gamma} X_i\rangle_\varepsilon\\
& = f\ddot{f} - f^2\langle \hat\nabla^\varepsilon_{\dot\gamma} X_i - J^\varepsilon_{\dot\gamma}X_i+ J^\varepsilon_{X_i}\dot\gamma, \hat\nabla^\varepsilon_{\dot\gamma} X_i\rangle_\varepsilon,\label{eq:firstpieceofi}
\end{align}
where we used the fact that $\|X_i\|_\varepsilon=1$, that $\hat{T}^\varepsilon$ is completely skew-symmetric, and that by definition $X_i \in \mathfrak{L}_J(\dot\gamma)$ which is means that $T(\dot\gamma,X_i)=0$.

Using the homogeneity properties of Lemma \ref{l:homo}, the fact that $X_i \in \mathfrak{L}_J(\dot\gamma)$, and that $\|\dot\gamma_\ver\| \to 0$ as $\varepsilon\to 0$, we obtain

\begin{equation}\label{eq:approx1}
\hat\nabla^\varepsilon_{\dot\gamma} X_i = -\frac{1}{\|\dot\gamma_\hor\|^2} \sum_{\alpha=1}^m \langle X_i, 
(\nabla_{\dot\gamma} J)_{Z_\alpha}\dot\gamma\rangle J_{Z_\alpha} \dot\gamma +O(\varepsilon).
\end{equation}
Furthermore, again by Lemma \ref{l:homo},
\begin{equation}\label{eq:approx2}
J^\varepsilon_{\dot\gamma} X_i  = \frac{1}{\varepsilon}J_{\dot\gamma_\ver} X_i + O(\varepsilon), \qquad J^\varepsilon_{X_i} \dot\gamma  = O(\varepsilon).
\end{equation}

Replacing \eqref{eq:approx1}-\eqref{eq:approx2} in \eqref{eq:firstpieceofi} and using assumption (ii), we obtain
\begin{equation}
\langle \hat\nabla^\varepsilon_{\dot\gamma} \nabla^\varepsilon_{\dot\gamma} W_i,W_i\rangle  = f\ddot{f} - \frac{f^2}{\|\dot\gamma_\hor\|^2} \left( \sum_{\alpha=1}^m \langle X_i, (\nabla_{\dot\gamma} J)_{Z_\alpha}\dot\gamma \rangle^2 + O(\varepsilon)\right).
\end{equation}
This gives the first term of \eqref{eq:integrand}. For the second term, a lengthy computation specialized to the case $W_i \in \mathfrak{L}_J(\dot\gamma)$, using again (ii), and that $\|\dot\gamma_\ver\|\to 0$ as $\varepsilon \to 0$, yields
\begin{equation}
\hat{R}^{\varepsilon}(X_i,\dot\gamma,\dot\gamma,X_i)  = R^\nabla(X_i,\dot\gamma_\hor,\dot\gamma_\hor,X_i) - \frac{1}{\varepsilon}\langle (\nabla_{X_i}J)_{\dot\gamma_\ver} X_i,\dot\gamma_\hor\rangle +  O(\varepsilon).
\end{equation}
Therefore, for \eqref{eq:integrand} we obtain
\begin{multline}\label{eq:Jacobipiece-1}
i(W_i,W_i)
 = f\ddot{f} + f^2\left[ R^\nabla(X_i,\dot\gamma_\hor,\dot\gamma_\hor,X_i) - \frac{1}{\|\dot\gamma_\hor\|^2}\sum_{\alpha=1}^m \langle X_i, (\nabla_{\dot\gamma_\hor}J)_{Z_\alpha}\dot\gamma_\hor\rangle^2 \right.   \\
 \left. -\frac{1}{\varepsilon}\langle (\nabla_{X_i}J)_{\dot\gamma_\ver}X_i,\dot\gamma_\hor\rangle  +O(\varepsilon)\right].
\end{multline}
Recall that $X_1,\dots,X_{n-m-1}$ constitutes a $g$-orthogonal frame for $\mathfrak{L}_J(\dot\gamma)$. Using assumption (iii), the term of order $\tfrac{1}{\varepsilon}$ in \eqref{eq:Jacobipiece-1} vanishes after taking the sum over $i$. Hence, using the curvature assumption \eqref{eq:curvassII} we obtain
\begin{equation}\label{eq:Jacobipiece}
\sum_{i=1}^{n-m-1} i(W_i,W_i)
 \geq  (n-m-1) f\ddot{f}  + f^2\|\dot\gamma_\hor\|^2 \underbrace{\left(\rho + O(\varepsilon)\right)}_{=:\rho_\ve}.
\end{equation}
Plugging \eqref{eq:Jacobipiece} and \eqref{eq:firstpiece} into \eqref{eq:pretrace} we obtain
\begin{align*}
\frac{\mathrm{tr}_{\mathfrak{L}_J(\dot\gamma)}(\mathrm{Hess}^{\hat\nabla^\varepsilon} r_\ve)}{n-m-1} \leq \dot{f}(r_\varepsilon) - \int_0^{r_\varepsilon} f \left(\ddot{f} +\rho_\varepsilon \|\dot\gamma_\hor\|^2 f\right) dt,
\end{align*}
where $\rho_\varepsilon \to \rho$ uniformly as $\varepsilon \to 0$, and the l.h.s.\ is computed at $y = \gamma(r_\varepsilon)$.

Notice that if $\rho >0$ then also $\rho_\varepsilon >0$ for sufficiently small $\varepsilon$. Thus, we can choose $f$ such that the integrand is zero, satisfying the prescribed boundary conditions $f(0)=f(r_\varepsilon)-1 =0$, that is
\begin{equation}
f(t) = \frac{\sin(\sqrt{\rho_\varepsilon}\|\dot\gamma_\hor\| t)}{\sin(\sqrt{\rho_\varepsilon}\|\dot\gamma_\hor\| r_\varepsilon)}.
\end{equation}
We finally obtain
\begin{equation}\label{eq:maininequalityII}
\frac{\mathrm{tr}_{\mathfrak{L}_J(\dot\gamma)}(\mathrm{Hess}^{\hat\nabla^\varepsilon} r_\ve)}{n-m-1} \leq \sqrt{\rho_\varepsilon} \|\dot\gamma_\hor\|\cot(\sqrt{\rho_\varepsilon}\|\dot\gamma_\hor\| r_\varepsilon).
\end{equation}
A standard argument by contradiction, based on the properties of the Riemannian cut locus, yields for $r_\varepsilon = d_\varepsilon(x,y)$ the following inequality:
\begin{equation}\label{eq:distanceineqII}
d_\varepsilon(x,y) \leq \frac{\pi}{\|\dot\gamma_{\hor}\|\sqrt{\rho_\varepsilon}} = \frac{\pi}{\|\nabla_\hor r_\varepsilon(y)\|\sqrt{\rho_\varepsilon}}.
\end{equation}

We are ready to conclude. By assumption $y\notin \Cut_0(x)$. By Proposition \ref{p:cutapprox}, this means that $y\notin \Cut_\ve(x)$ for all sufficiently small $\ve >0$. Recall that by Remark \ref{rmk:convergence} it holds $\|\nabla_\hor r_\varepsilon\|  \to 1$, and furthermore $d_\ve \to d_0$ as $\ve \to 0$ by Lemma  \ref{l:convergence-of-distance}. We have then
\begin{equation}
d_0(x,y) \leq \frac{\pi}{\sqrt{\rho}}, \qquad \forall y \notin \Cut_0(x).
\end{equation}
Since $\Cut_0(x)$ is a nowhere dense set for all $x\in \M$, it follows that the sub-Riemannian diameter is not greater that $\pi/\sqrt{\rho}$ and since $d_0$ is complete then $\M$ is compact.

We can apply the same argument to the universal cover $\widetilde{\M}$, obtaining that the fundamental group of $\M$ is finite.  \hfill $\qed$

\appendix

\section{Smooth convergence of the canonical variation}\label{b:app}

In this appendix we prove Proposition \ref{p:cutapprox}, concerning the convergence of $d_\ve \to d_0$ and all its derivatives in the compact-open topology outside of the sub-Riemannian cut locus.

We first state two preliminary results. We refer in their proofs to \cite{ABB,Rifford} for basic facts in sub-Riemannian geometry, such as endpoint map, exponential map, singular trajectories and Lagrange multipliers. Only in this section, the notation $\langle\lambda,X\rangle$ denotes the action of one-forms $\lambda \in T^*M$ on vectors $X\in TM$.

\begin{lemma}\label{l:convergence-of-distance}
We have that $d_\ve\to d_0$ uniformly on compact sets of $\M$, as $\ve \to 0$. 
\end{lemma}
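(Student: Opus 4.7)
The plan is to sandwich $d_\ve(x,y)$ between two bounds that force $d_\ve \to d_0$ pointwise, and then upgrade this to uniform convergence on compact sets by invoking Dini's theorem. For the upper bound, I would observe that the restriction of $g_\ve$ to $\hor$ coincides with $g_\hor$ for every $\ve > 0$, so any horizontal curve joining $x$ to $y$ has the same length in $g_\ve$ as in the sub-Riemannian structure; taking the infimum over such curves yields $d_\ve(x,y) \le d_0(x,y)$. Moreover, whenever $0 < \ve_1 \le \ve_2$ one has $g_{\ve_1} \ge g_{\ve_2}$ pointwise as quadratic forms (since $1/\ve_1 \ge 1/\ve_2$), hence $d_{\ve_1} \ge d_{\ve_2}$. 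This monotonicity, combined with the upper bound, guarantees that $d^\ast(x,y) := \lim_{\ve \to 0} d_\ve(x,y)$ exists and satisfies $d^\ast \le d_0$.

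For the matching lower bound $d^\ast \ge d_0$, I would extract a subsequential limit of minimizing geodesics and show it is horizontal. Fix $x,y \in \M$, set $D := d_0(x,y)$, and for each small $\ve > 0$ let $\gamma_\ve : [0,1] \to \M$ be a minimizing $g_\ve$-geodesic joining $x$ to $y$, parametrized with constant $g_\ve$-speed (which exists by completeness of $(\M,d_\ve)$). Decomposing $\dot\gamma_\ve = \dot\gamma_{\ve,\hor} + \dot\gamma_{\ve,\ver}$, the identity
\begin{equation*}
\|\dot\gamma_{\ve,\hor}\|^2 + \tfrac{1}{\ve}\|\dot\gamma_{\ve,\ver}\|^2 = d_\ve(x,y)^2 \le D^2
\end{equation*}
yields $\|\dot\gamma_{\ve,\hor}\| \le D$ and $\|\dot\gamma_{\ve,\ver}\| \le \sqrt{\ve}\, D$ almost everywhere. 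The curves are then uniformly Lipschitz with respect to the reference metric $g_1$ and stay in a fixed $g_1$-compact set, so by Arzelà--Ascoli a subsequence $\gamma_{\ve_k}$ converges uniformly to a Lipschitz curve $\gamma_0$. Working in local charts with an adapted orthonormal frame, the horizontal velocity coefficients are uniformly bounded in $L^2$ and admit, after a further extraction, a weak $L^2$ limit, while the vertical velocities tend to zero in $L^2$; this identifies $\gamma_0$ as absolutely continuous and purely horizontal. Lower semicontinuity of the horizontal energy under weak $L^2$ convergence then gives
\begin{equation*}
d_0(x,y)^2 \le L_\hor(\gamma_0)^2 \le \liminf_{k \to \infty} L_\hor(\gamma_{\ve_k})^2 \le \liminf_{k \to \infty} d_{\ve_k}(x,y)^2,
\end{equation*}
which, combined with the upper bound, proves pointwise convergence $d_\ve \to d_0$.

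Finally, uniform convergence on compact subsets of $\M \times \M$ follows from Dini's theorem: each $d_\ve$ is continuous (being a Riemannian distance), the limit $d_0$ is continuous by the Chow--Rashevskii theorem (since $\hor$ is bracket-generating), and the convergence is monotone by the first paragraph. The main obstacle lies in the lower-bound step: one must carefully verify that the Arzelà--Ascoli limit is absolutely continuous with horizontal velocity field and that the sub-Riemannian energy is lower semicontinuous under this mixed (uniform in position, weak in velocity) mode of convergence. Everything else is a routine application of classical tools.
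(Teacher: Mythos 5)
Your proof is correct and follows essentially the same route as the paper's: monotonicity of $d_\ve$ in $\ve$, extraction of a uniform Arzel\`a--Ascoli limit of minimizing $g_\ve$-geodesics, identification of the limit as a horizontal curve via weak $L^2$ compactness of velocities, weak lower semicontinuity to close the chain of inequalities, and Dini's theorem to upgrade to uniform convergence. The only difference is cosmetic: the paper phrases the weak-$L^2$ step in terms of the endpoint map and $L^2$ controls relative to a local orthonormal frame (thereby also packaging the fact that the uniform limit of trajectories is the trajectory of the weak limit control, with $\End_x(\bar u)=y$), whereas you work directly with the velocity field in a chart; both handle the ``obstacle'' you flag at the end in the standard way, by passing the integral identity $\gamma_{\ve_k}(t)=x+\int_0^t\dot\gamma_{\ve_k}$ through weak $L^2$ convergence tested against characteristic functions.
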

\begin{proof}
Firstly, note that $d_\varepsilon$ is increasing as $\varepsilon \to 0$. Let $x,y \in \M$. Fix a sequence $\gamma_{\varepsilon} :[0,1] \to \M$ of minimizing $g_\varepsilon$-geodesics joining $x$ with $y$. Letting $d=d_1$, we have
\begin{equation}
d(\gamma_\varepsilon(t),\gamma_\varepsilon(s)) \leq d_\varepsilon(\gamma_\varepsilon(t),\gamma_\varepsilon(s)) = |t-s| d_{\varepsilon}(x,y) \leq |t-s| d_0(x,y), \qquad \forall \, 0 < \ve \leq 1.
\end{equation}
Letting $t=0$ in the above equation, we have $d(x,\gamma_\varepsilon(s)) \leq d_0(x,y)$, for all $0 < \ve \leq 1$. In particular, the sequence $\gamma_\varepsilon$ is equicontinous and uniformly bounded. By Ascoli-Arzel\`a theorem, and up to extraction of a subsequence, $\gamma_\varepsilon$ converges uniformly to a curve $\gamma_0$.

Let $X_1,\dots,X_n,Z_1,\dots,Z_m$ be a local frame, orthonormal for $g$, defined in a compact neighborhood $K$ of $\gamma_0$. All geodesics $\gamma_\varepsilon$ are contained in $K$, for sufficiently small $\varepsilon$ in the mentioned subsequence. For any control $u \in L^2([0,1],\R^{n+m})$, consider the following ODE on $\M$:
\begin{equation}\label{eq:ODE}
\dot\gamma(t) = \sum_{i=1}^n u_i(t) X_i(\gamma(t)) + \sum_{\alpha=1}^m u_{n+\alpha}(t) Z_\alpha(\gamma(t)), \qquad \gamma(0) = x.
\end{equation}
Let $\mathcal{U}\subset L^2([0,1],\R^{n+m})$ be the subspace such that the solution of \eqref{eq:ODE} is well defined up to time $1$. We consider the endpoint map with initial point $x$, $\End_x: \mathcal{U} \to \M$, which associates with a control $u \in L^2([0,1],\R^{n+m})$ the final point $\gamma(1)$ of the solution to \eqref{eq:ODE}. We stress that $\End_x$ does not depend on $\varepsilon$. For a given control $u \in \mathcal{U}$, associated with a trajectory $\gamma_u$, its energy is
\begin{equation}
J_\varepsilon(u) = \frac{1}{2}\int_0^1\|\dot{\gamma}_u(t)\|^2_{g_\ve} dt, \qquad \ve >0.
\end{equation}
If $\gamma_u$ is a $g_\ve$-geodesic, we have $d_\varepsilon(\gamma_u(0),\gamma_u(1))^2 = 2 J_\varepsilon(u)$, as a consequence of the Cauchy-Schwarz inequality.

To any minimizing $g_\ve$-geodesic $\gamma_\varepsilon$ in the aforementioned sequence, we associate through \eqref{eq:ODE} the corresponding control $u_\varepsilon = (u_{\hor,\varepsilon},u_{\ver,\varepsilon}) \in L^2([0,1],\R^{n+m})$, where
\begin{equation}
u_{\hor,\varepsilon} \in L^2([0,1],\R^n),\qquad u_{\ver,\varepsilon} \in L^2([0,1],\R^m).
\end{equation}
Notice that
\begin{equation}
2J_\varepsilon(u_\varepsilon) = \|u_{\hor,\varepsilon}\|^2_{L^2} + \frac{1}{\varepsilon}\|u_{\ver,\varepsilon}\|^2_{L^2} = d_\varepsilon^2(x,y) \leq d_0^2(x,y).
\end{equation}
It follows that $\|u_{\hor,\varepsilon}\|_{L^2}$ is bounded, and $\|u_{\ver,\varepsilon}\|_{L^2} \to 0$. By weak compactness of bounded sets of $L^2$, we have that, up to extraction of subsequences,
\begin{equation}
u_\varepsilon  \rightharpoonup \bar{u} = (\bar{u}_\hor,0) \in L^2([0,1],\R^{n+m}).
\end{equation}
Notice that the associated trajectory $\gamma_\varepsilon$ converges uniformly to the limit trajectory $\bar{\gamma}$ with control $\bar{u}$, and hence $\End_x(\bar{u})=y$. By the weak semi-continuity of the norm
\begin{equation}\label{eq:firstineq}
\|\bar{u}\|_{L^2} \leq \liminf_{\varepsilon\to 0} \|u_\varepsilon\|_{L^2} \leq  \liminf_{\varepsilon\to 0} d_\varepsilon(x,y) \leq d_0(x,y) \leq \|\bar{u}\|_{L^2}.
\end{equation}
Thus, all inequalities in \eqref{eq:firstineq} are equalities, and thanks to monotonicity, we have
\begin{equation}
d_0(x,y) = \lim_{\varepsilon\to 0} d_\varepsilon(x,y) = \lim_{\varepsilon \to 0} \|u_\varepsilon\|_{L^2} = \|\bar{u}\|_{L^2}.
\end{equation}
In particular $d_\varepsilon \to d_0$, uniformly on compact sets by Dini's theorem. We remark that the weak convergence  and norm convergence of $u_\ve$ imply the $L^2$ convergence $u_\varepsilon \to \bar{u}$.
\end{proof}

For $x\in \M$, and $\ve>0$ we denote by $\exp_x^\ve : T_x^*\M\to \M$ the Riemannian exponential map (on the cotangent space), while $\exp_x = \exp_x^0 : T_x^*\M \to \M$ is the sub-Riemannian one. For all $\ve \geq 0$, we recall that $\exp_x^\ve$ is the projection $\pi : T^*\M \to \M$ of the Hamiltonian flow $e^{t\vec{H}_\ve} : T^*\M \to T^*\M$ associated to the (sub-)Riemannian structure $g_\ve$.

\begin{lemma}\label{l:convergence-of-geodesics}
Let $\gamma_\ve :[0,1]\to \M$ be a sequence of minimizing $g_\ve$-geodesics, starting from $x\in \M$, all contained in a common compact subset of $\M$. Let $\lambda_\ve \in T_x^*\M$ be the initial covector of $\gamma_\ve$, that is $\gamma_\ve(t) = \exp_x^\ve(t \lambda_\ve)$ for all $t\in [0,1]$. Let $y = \lim_{\ve \to 0} \gamma_\ve(1)$, and assume that $y\notin \Cut_0(x)$. Let $\bar\gamma :[0,1] \to \M$ be the unique sub-Riemannian minimizing geodesic between $x$ and $y$, with initial covector $\bar\lambda \in T_x^*\M$, such that $\bar\gamma(t) = \exp_x(t\bar\lambda)$ for all $t\in [0,1]$. Then $\gamma_\ve \to \bar\gamma$ uniformly and $\lambda_\ve \to \bar\lambda$ as $\ve \to 0$.
\end{lemma}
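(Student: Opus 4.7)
The plan is as follows. First, I would establish the uniform convergence $\gamma_\ve \to \bar\gamma$ by adapting the proof of Lemma \ref{l:convergence-of-distance}. Writing the controls $u_\ve = (u_{\hor,\ve}, u_{\ver,\ve})$ of $\gamma_\ve$ in a fixed $g$-orthonormal frame on a compact neighborhood containing all $\gamma_\ve$, the energy identity $\|u_{\hor,\ve}\|_{L^2}^2 + \tfrac{1}{\ve}\|u_{\ver,\ve}\|_{L^2}^2 = L_\ve^2 \to d_0(x,y)^2$ forces $u_{\hor,\ve}$ to be $L^2$-bounded and $\|u_{\ver,\ve}\|_{L^2} \to 0$. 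By the weak compactness and norm-convergence arguments used in that lemma, together with the uniqueness of the minimizing geodesic from $x$ to $y$ (since $y \notin \Cut_0(x)$), the full sequence $u_\ve$ converges strongly in $L^2$ to the control of $\bar\gamma$, which promotes to uniform convergence $\gamma_\ve \to \bar\gamma$ on $[0,1]$.

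For the covector convergence, I would use that the Hamiltonians $H_\ve(\lambda) = \tfrac{1}{2}(\|\lambda_\hor\|_{g^*}^2 + \ve\|\lambda_\ver\|_{g^*}^2)$ form a smooth family in $\ve \in [0,\infty)$, so the exponentials $\exp_x^\ve$ depend smoothly on $(\lambda,\ve)$ down to $\ve = 0$. Since $y \notin \Cut_0(x)$, the sub-Riemannian exponential $\exp_x$ is a local diffeomorphism at $\bar\lambda$. The inverse function theorem with parameter then supplies neighborhoods $U \ni \bar\lambda$ in $T_x^*\M$, $V \ni y$ in $\M$, some $\ve_0 > 0$, and a smooth map $\Psi : V \times [0,\ve_0) \to U$ such that $\Psi(\cdot,\ve)$ inverts $\exp_x^\ve$ onto $U$. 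Since $\gamma_\ve(1) \in V$ for small $\ve$, the covector $\lambda_\ve^\sharp := \Psi(\gamma_\ve(1), \ve) \in U$ is well-defined and $\lambda_\ve^\sharp \to \bar\lambda$ by continuity of $\Psi$.

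It remains to show that $\lambda_\ve = \lambda_\ve^\sharp$ for all sufficiently small $\ve$, equivalently $\lambda_\ve \in U$. I would argue by contradiction: suppose a subsequence $\ve_n \to 0$ with $\lambda_{\ve_n} \notin U$. The horizontal component $\lambda_{\hor,\ve_n}$ is always $g^*$-bounded by $L_{\ve_n}$; if a sub-subsequence of $\{\lambda_{\ve_n}\}$ were also bounded in $T_x^*\M$, then by extracting a convergent limit $\lambda_{\ve_n} \to \tilde\lambda$ and using continuous dependence of the Hamiltonian flow jointly in $(\lambda,\ve)$, we would get $\exp_x(t\tilde\lambda) = \bar\gamma(t)$ for all $t \in [0,1]$; uniqueness of the cotangent lift for the strictly normal minimizer $\bar\gamma$ (which holds since $y \notin \Cut_0(x)$ forces $\bar\gamma$ to be non-abnormal) would then give $\tilde\lambda = \bar\lambda \in U$, a contradiction.

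The main obstacle is thus excluding the blow-up $\|\lambda_{\ver,\ve_n}\|_{g^*} \to \infty$, since the weighted energy identity only yields $\sqrt{\ve_n}\|\lambda_{\ver,\ve_n}\|_{g^*} \leq L_{\ve_n}$. I would handle this by a compactness argument in the cotangent bundle: passing to the rescaled covectors $\mu_n := \lambda_{\ve_n}/\|\lambda_{\ver,\ve_n}\|_{g^*}$ on the unit sphere of $T_x^*\M$, one extracts a convergent sub-subsequence $\mu_n \to \hat\mu$, necessarily purely vertical and of unit length, and analyzes the corresponding Hamiltonian dynamics in the high-vertical-momentum regime to show that the trajectory $\gamma_{\ve_n}$ cannot escape a shrinking neighborhood of $x$, contradicting $\gamma_{\ve_n}(1) \to y \neq x$. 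This compactness step, replacing a geometrically transparent fact in model cases like the Heisenberg group by a rigorous argument in the present generality, is the technical heart of the proof.
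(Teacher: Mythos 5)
Your first half (uniform convergence $\gamma_\ve \to \bar\gamma$ by recycling the control/$L^2$ argument of Lemma~\ref{l:convergence-of-distance} and the uniqueness of the sub-Riemannian minimizer at $y \notin \Cut_0(x)$) matches the paper and is fine.

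Your treatment of the covector convergence takes a genuinely different route from the paper, and it has a real gap. The paper does not localize $\lambda_\ve$ in a neighbourhood of $\bar\lambda$ via an inverse function theorem. Instead it works with the Lagrange multiplier $\eta_\ve \in T^*_{\gamma_\ve(1)}\M$ at the \emph{endpoint}: since $\bar\gamma$ is non-singular, one can pick finitely many sub-Riemannian test variations $v_1,\dots,v_{n+m}$ along which $D_{\bar{u}}\End_x$ is onto $T_y\M$, and these stay a frame for nearby controls. Pairing the Lagrange relation $\langle \eta_\ve, D_{u_\ve}\End_x(v)\rangle = (u_\ve, v)_{L^2}$ against the resulting continuous family $v(u_\ve)$ shows at once that $\langle \eta_\ve, V\rangle$ converges for every smooth vector field $V$, hence that $\eta_\ve$ converges --- \emph{a fortiori} it is bounded. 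There is no separate ``unbounded covector'' case to discard. Finally $\lambda_\ve(0) = e^{-\vec{H}_\ve}(\eta_\ve)$ converges by joint continuity of the Hamiltonian flows.

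The gap in your proposal is precisely the step you yourself flag as the technical heart: excluding a subsequence with $\|\lambda_{\ver,\ve_n}\|_{g^*}\to\infty$. Your outline (``rescale to the unit sphere, pass to a limit $\hat\mu$ purely vertical, analyse the high-vertical-momentum regime to show the trajectory stays near $x$'') is not carried out, and it is not at all obvious how to make it rigorous in the stated generality: the time-$1$ flow of $\vec{H}_{\ve_n}$ with $\ve_n\to 0$ and $\|\lambda_{\ver,\ve_n}\|\to\infty$ while $\sqrt{\ve_n}\|\lambda_{\ver,\ve_n}\|$ stays bounded mixes several asymptotic regimes, and the desired averaging-to-zero of the horizontal displacement relies on structure of the Hamiltonian flow (rapid rotation of $\lambda_\hor$) that you have not exhibited for a general orthogonal splitting. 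Your bounded case (passing to a limit $\tilde\lambda$, using joint continuity of the flow and uniqueness of the normal lift to conclude $\tilde\lambda = \bar\lambda$) is fine, but without eliminating the unbounded case the argument is incomplete. The Lagrange-multiplier route is what actually supplies the missing compactness: boundedness of $\eta_\ve$ falls out of testing against a finite convergent family of variations rather than needing a separate dynamical estimate.
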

\begin{proof}
Arguing as in the proof of Lemma \ref{l:convergence-of-distance} we have that $\gamma_\ve$ converges up to extraction to a limit constant-speed minimizing sub-Riemannian geodesic $\bar\gamma$ joining $x$ with $y$. Since we are assuming that $y\notin \Cut_0(x)$, there is only one such $\bar{\gamma}$ and thus $\gamma_\ve \to \bar\gamma$ with no need for extraction (since any sub-sequence converges up to extraction to the same limit). 
For sufficiently small $\ve$ the family $\gamma_\ve$ is contained in a compact neighbourhood  of $\bar\gamma$, where a local $g$-orthonormal frame $X_1,\dots,X_n,Z_1\dots,Z_m$ has been fixed, so that the endpoint map $\End_x$ is well defined.  Let also $u_\ve \in L^2([0,1],\R^{n+m})$ be the control of $\gamma_\ve$, for $\ve > 0$, and let $\bar{u}$ be the control of $\bar\gamma$. Again as in the proof of Lemma \ref{l:convergence-of-distance}, $u_\ve \to \bar{u}$ in $L^2$.

Since each $\gamma_\varepsilon$ is a length-minimizer between $x$ and $y_\ve$, for all $\varepsilon>0$ there exists a Lagrange multiplier $\eta_\varepsilon \in T_{y_\ve}^*M$ such that
\begin{equation}\label{eq:lagrange}
\langle \eta_\varepsilon,  D_{u_\varepsilon}\End_x(v) \rangle   = D_{u_\varepsilon} J_\varepsilon(v), \qquad \forall v \in L^2([0,1],\R^{n+m}).
\end{equation}
We now prove that, if $\bar{\gamma}$ is not a singular geodesic, that is, if $\bar{u}$ is not a critical point of $\End_x$, then $\eta_\varepsilon$ converges to the Lagrange multiplier $\bar{\eta}$ of $\bar{\gamma}$. We remark that if $\bar{\gamma}$ is non-singular, then the Lagrange multiplier $\bar{\eta}$ will be unique.

In order to simplify the r.h.s.\ of \eqref{eq:lagrange}, we restrict ourselves to sub-Riemannian variations, that is to $v \in L^2([0,1],\R^{n}) \subset L^2([0,1],\R^{n+m})$. In this case $D_{u_\varepsilon} J_\varepsilon(v) = D_{u_\varepsilon} J(v)$, where $J=J_1$, and \eqref{eq:lagrange} reads
\begin{equation}\label{eq:lagrange2}
\langle \eta_\varepsilon,  D_{u_\varepsilon}\End_x(v) \rangle   = (u_\varepsilon, v)_{L^2}, \qquad \forall v \in L^2([0,1],\R^{n}).
\end{equation}
We claim that for any $u$ sufficiently close to $\bar{u}$ and smooth vector field $V$ on $\M$, there exists a sub-Riemannian variation $v(u) \in L^2([0,1],\R^{n})$ (depending on the choice of $V$) such that $D_u \End_x (v(u)) = V_z$, where $z=\End_x(u)$, and the map $u \mapsto v(u)$ is continuous. To prove the claim, note that thanks to our assumption, $\bar{u}$ is a regular point for the sub-Riemannian endpoint map (that is the endpoint map restricted to controls of the form $u =(u_\hor,0)$). In other words, there exist $v_1,\dots,v_{n+m} \in L^2([0,1],\R^{n})$ such that $D_{\bar{u}} \End_x(v_i) \in T_y \M$, for $i=1,\dots,n+m$, are independent vectors. Since $u\mapsto D_u\End_x$ is continuous, the same holds (with the same $v_i$) for all controls $u$ in an open neighbourhood $U \subset L^2([0,1],\R^{n+m})$ of $\bar{u}$. Up to restricting $U$, we can assume that $\End(U) \subset O$, where $O$ is a coordinated neighbourhood of $\End_x(\bar{u})=y$. Let
\begin{equation}
\Phi : U \times \R^{n+m}  \to U \times O, \qquad \Phi(u,\alpha) = \left(u,\End_x\left(u+\sum_{i=1}^{n+m}\alpha_i v_i\right)\right).
\end{equation}
By the inverse function theorem, there exists a local inverse. More precisely, and up to restriction of the domains, there exists an inverse $\Phi^{-1} : U \times O \to U \times \R^{n+m}$. Let $\delta : (-a,a) \to \M$ be a smooth curve with $\delta(0) = z$ and $\dot\delta(0) =V_z \in T_z \M$. We have
\begin{equation}
\Phi^{-1}(u,\delta(\tau)) = (u,\alpha(u,\tau)),\quad\text{with}\quad \End_x(u+\sum\alpha_i(u,\tau)v_i) = \delta(\tau),
\end{equation}
and $\alpha : U \times (-a,a) \to \R^{n+m}$ is smooth. The map $v : U \to L^2([0,1],\R^{n})$ given by
\begin{equation}
u\mapsto \sum_{i=1}^{n+m} \frac{\partial \alpha_i}{\partial \tau}(u,0) v_i,
\end{equation}
is smooth, and satisfies $D_u \End_x(v(u)) = V_z$, by construction. This proves the claim.

By plugging such a choice of $v(u_\ve)$ in \eqref{eq:lagrange2}, we obtain
\begin{equation}\label{eq:vectorfield}
\langle \eta_\varepsilon, V\rangle = (u_\varepsilon, v(u_\varepsilon))_{L^2}.
\end{equation}
In particular, since $u_\varepsilon \to \bar{u}$ and $v(u_\ve) \to v(\bar{u})$, we have proved that for any choice of smooth vector field $V$ on $\M$, the l.h.s.\ of \eqref{eq:vectorfield} is convergent. This is equivalent to the convergence of $\eta_\varepsilon$ to some $\bar{\eta} \in T_y^*\M$. Taking the limit in \eqref{eq:lagrange} we obtain
\begin{equation}
\langle \bar\eta,  D_{\bar{u}}\End_x(v) \rangle   = (\bar{u},v)_{L^2}, \qquad \forall v \in L^2([0,1],\R^{n}).
\end{equation}
By definition, this means that $\bar{\eta}$ is the unique normal Lagrange multiplier associated with the minimizing sub-Riemannian geodesic $\bar{\gamma}$ with control $\bar{u}$.

It is well-known that the normal extremal lift $\lambda_\ve :[0,1] \to T^*\M$ of $\gamma_\ve$ (resp. the normal extremal lift $\bar\lambda$ of $\bar\gamma$) can be recovered from the Lagrange multiplier by the Hamiltonian flows $e^{t\vec{H}_\ve}:T^*\M \to T^*\M$ given by the Hamiltonian $H_\ve$ associated with $g_\ve$ for all $\ve~>~0$ (resp.\ the sub-Riemannian Hamiltonian flow $e^{t\vec{H}}$ associated with the sub-Riemannian Hamiltonian $H$).  More precisely,
\begin{equation}
\lambda_\ve(t) = e^{(t-1)\vec{H}_\ve}(\eta_\ve), \qquad \text{and} \qquad \bar{\lambda}(t) = e^{(t-1)\vec{H}}(\bar{\eta}).
\end{equation}
Since $H_\ve \to H$ uniformly on compact sets as functions on $T^*\M$, it follows that $\lambda_\ve(t) \to \bar{\lambda}(t)$ uniformly for $t\in [0,1]$, which is thus equivalent to the convergence of the initial covector $\lambda_\ve(0) \to \bar{\lambda}(0)$.
\end{proof}

%

We are now ready for the proof of Proposition \ref{p:cutapprox}.

\begin{proof}[Proof of Proposition \ref{p:cutapprox}]
Fix $x\in \M$, and let $F: T_{x}^*\M \times (-1,1) \to \M \times (-1,1)$ be the smooth map given by
\begin{equation}
F(\lambda,\varepsilon) = \left(\exp^\ve_x(\lambda),\ve\right)
\end{equation}
where $\exp^\ve_x :T_x^*\M \to \M$ denotes the (sub-)Riemannian exponential map associated with the structure $g_\ve$. Notice that the latter is well defined and smooth for all values of $\ve \in (-1,1)$, as the projection $\pi :T \M \to \M$ of the Hamiltonian flow of $H_\ve$. By our assumption, there exists a unique minimizing $g_0$-geodesic joining $x$ with $y$, with initial covector $\bar{\lambda}$, and the latter is a regular point for $\exp_x^0$. The differential of $F$ at $(\bar\lambda,0)$ is given by
\begin{equation}
(d_{(\bar{\lambda},0)}F) (\dot\lambda,\dot\ve) = \left((d_{\bar\lambda}\exp^\ve_x) \dot\lambda + \frac{\partial F(\lambda,\ve)}{\partial \ve} \dot\ve,\dot\ve\right), \qquad (\dot\lambda ,\dot\ve)\in T_{(\bar\lambda,0)} U \times \R.
\end{equation}
Since $\rank (d_{\bar\lambda}\exp^\ve_x)= n$, by the inverse function theorem there exist neighbourhoods $U\subset T_x^*\M$ of $\bar{\lambda}$, $I=(-\ve',\ve' ) \subset (-1,1)$ of $0$, and $V\subset \M$ of $y$ and a smooth diffeomorphism $G : V\times I \to U\times I$ such that $F \circ G = \mathrm{Id}_{V\times I}$.

This means that for all $|\ve| < \ve'$ and for all $z \in V$ there exists a unique $\lambda_{\ve,z} \in U$ such that $\exp_x^\ve(\lambda_{\ve,z}) = z$, corresponding to a geodesic $\gamma_{\ve,z}$ between $x$ and $z$. Clearly $x$ and $z$ are not conjugate along this geodesic. We claim that, up to restricting further $V$ and $\ve'$, the curve $\gamma_{\ve,z}$ is the unique minimizing $g_\ve$-geodesic joining $x$ with $z$.

Indeed, assume by contradiction that there is a sequence $z_n \to y$ and $\ve_n \to 0$ and minimizing geodesics $\gamma_n$ joining $x$ with $z_n$, with initial covector $\lambda_n \notin U$, and whose length is not greater than the length of $\gamma_{\ve_n,z_n}$. Let also $\bar\gamma$ be the minimizing $g_0$-geodesic between $x$ and $y$. By Lemma \ref{l:convergence-of-geodesics}, we have that $\gamma_n \to \bar\gamma$, and $\lambda_{\ve_n} \to \bar\lambda$, which is a contradiction, as $\lambda_{\ve_n}$ is separated from $\bar\lambda$.

To prove the smoothness part of the statement, we remark that the map $(z,\ve) \mapsto \lambda_{\ve,z}$ built above is smooth. Furthermore by minimality one has $d_\ve(x,z)^2 = 2H_\ve(\lambda_{z,\ve})$. That is, in terms of a $g$-orthonormal frame for $g$ in a neighbourhood of $V$, it holds
\begin{equation}
d_\ve(x,z)^2 =\sum_{i=1}^n \langle \lambda_{\ve,z},X_i(z)\rangle^2 + \ve \sum_{\alpha=1}^m \langle \lambda_{\ve,z},Z_\alpha(z)\rangle^2,
\end{equation}
thus proving that the squared distance $d_\ve(x,z)^2$ is jointly smooth in both variables for $|\ve|< \ve'$ and $z \in V$. Since $V$ is separated from $x$, we have $d_\ve(x,z)>0$ for all $z\in V$, and the joint smoothness of $d_\ve(x,z)$ follows.
\end{proof}

\section{On the index lemma and adjoint connections}\label{a:app}

Let $(\M,g)$ be a Riemannian manifold. Let $\nabla$ be a metric connection, with torsion $T$. We also use the notation $g = \langle\;,\,\rangle$ for the scalar product. The endomorphism $J : T\M \to T\M$ is defined by $\langle J_X Y ,Z \rangle = \langle X, T(Y,Z)\rangle$.

\begin{lemma}
Let $\gamma :[0,r] \to \M$ be a smooth length-parametrized curve. Let $\Gamma(t,s):[0,r] \times [-\varepsilon,\varepsilon] \to \M$ be a one-parameter variation of $\gamma$ with fixed endpoints. Let $X = \partial_s \Gamma|_{s=0}$. Then
\begin{equation}
\left.\frac{d}{ds}\right|_{s=0} \ell(\Gamma(\, \cdot \, ,s)) = -\int_0^r \langle \nabla_{\dot\gamma}\dot\gamma + J_{\dot\gamma} \dot\gamma,X\rangle dt.
\end{equation}
In particular, $\gamma$ is a locally distance minimizing geodesic if and only if $\nabla_{\dot\gamma}\dot\gamma + J_{\dot\gamma}\dot\gamma = 0$.
\end{lemma}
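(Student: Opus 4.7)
The plan is to compute the first variation of arclength directly, using the defining identity $\nabla_{\partial_t}\partial_s - \nabla_{\partial_s}\partial_t = T(\partial_t,\partial_s)$ of a connection with torsion (recall $[\partial_t,\partial_s]=0$ for the coordinate vector fields on $[0,r]\times[-\varepsilon,\varepsilon]$). Since $\gamma$ is length-parametrized, $\|\partial_t\Gamma\|=1$ at $s=0$, hence
\begin{equation}
\left.\frac{d}{ds}\right|_{s=0}\ell(\Gamma(\cdot,s)) = \int_0^r \langle \nabla_{\partial_s}\partial_t,\dot\gamma\rangle\Big|_{s=0}\, dt.
\end{equation}

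Next I would replace $\nabla_{\partial_s}\partial_t$ using the torsion identity, obtaining $\nabla_{\partial_s}\partial_t = \nabla_{\partial_t}\partial_s - T(\partial_t,\partial_s)$. Evaluated at $s=0$, with $X = \partial_s\Gamma|_{s=0}$, this gives $\nabla_{\dot\gamma} X - T(\dot\gamma,X)$. Substituting back, the first variation becomes
\begin{equation}
\int_0^r \langle \nabla_{\dot\gamma} X,\dot\gamma\rangle\, dt - \int_0^r \langle T(\dot\gamma,X),\dot\gamma\rangle\, dt.
\end{equation}
Integrating by parts the first integrand (valid because $\nabla$ is metric, so $\partial_t\langle X,\dot\gamma\rangle = \langle\nabla_{\dot\gamma}X,\dot\gamma\rangle + \langle X,\nabla_{\dot\gamma}\dot\gamma\rangle$), and using $X(0)=X(r)=0$ from the fixed-endpoints hypothesis, the boundary terms vanish and the first integral equals $-\int_0^r\langle X,\nabla_{\dot\gamma}\dot\gamma\rangle dt$. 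For the second integrand, the definition $\langle J_UV,W\rangle = \langle U,T(V,W)\rangle$ applied with $U=V=\dot\gamma$, $W=X$ yields $\langle T(\dot\gamma,X),\dot\gamma\rangle = \langle J_{\dot\gamma}\dot\gamma,X\rangle$. Combining these gives the stated formula.

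The second assertion then follows by the standard fundamental lemma of the calculus of variations: a locally length-minimizing curve must satisfy $\langle \nabla_{\dot\gamma}\dot\gamma + J_{\dot\gamma}\dot\gamma, X\rangle = 0$ along $\gamma$ for every compactly supported variation field $X$, which forces the geodesic equation $\nabla_{\dot\gamma}\dot\gamma + J_{\dot\gamma}\dot\gamma = 0$; conversely, any curve solving this equation is a critical point of the length functional, which is the usual first-order characterization. The only subtle point is the torsion correction coming from $\nabla_{\partial_s}\partial_t \neq \nabla_{\partial_t}\partial_s$, but this is purely algebraic and produces exactly the extra $J_{\dot\gamma}\dot\gamma$ term; there is no real obstacle beyond being careful with the sign convention in the definition of $J$.
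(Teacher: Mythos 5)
Your proof is correct and follows essentially the same route as the paper: compute the first variation of arclength, swap $\nabla_{\partial_s}\partial_t$ for $\nabla_{\partial_t}\partial_s$ via the torsion identity (the paper phrases this as $\nabla_S T = \hat\nabla_T S$, but it is the same algebraic step), integrate by parts using that $\nabla$ is metric and the endpoints are fixed, and convert the torsion term to a $J_{\dot\gamma}\dot\gamma$ term via the defining duality of $J$. The only cosmetic difference is that the paper makes the adjoint connection explicit, whereas you use the torsion identity directly; the content is identical.
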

Given an affine connection $D$, with torsion $\Tor$, the adjoint connection $\hat{D}$ is defined by the formula $\hat{D}_X Y = D_X Y - \Tor(X,Y)$. Observe that $\hat{\hat{D}} = D$.
\begin{proof}
Consider the vector fields $S= \Gamma_* \partial_s$ and $\Gamma_*\partial_t$, which in particular satisfy $T(t,0) = \dot\gamma(t)$ and $S(t,0) = X(t)$. By definition, if $U \subseteq [0,r] \times [-\varepsilon,\varepsilon]$ any sufficiently small open and $\bar{S}$ and $\bar{T}$ are arbitrary vector fields on $\M$ extending $S|_U$ and $T|U$, then $[\bar{S}, \bar{T}]|_{\Gamma(U)} =0$. We hence have the following identities,
\begin{align}
\label{STswitch} \nabla_S T & = \hat \nabla_T S \\
\label{STTswitch} \nabla_S \nabla_T T & = R(S,T) T + \nabla_T \nabla_S T
\end{align}
Using the identity \eqref{STswitch}, we obtain
\begin{align*}
\left.\frac{d}{ds}\right|_{s=0} \ell(\Gamma_s) & =	 \int_0^r \frac{\langle \nabla_S T, T\rangle}{\|T\|} dt = \int_0^r \langle \nabla_T S  + T(S,T), T\rangle dt \\
& = \langle S, T \rangle|_{t=0}^{t=r} - \int_0^r \langle \nabla_T T+ J_T T, S\rangle  dt  = -\int_0^r \langle \nabla_{\dot\gamma}\dot\gamma + J_{\dot\gamma} \dot\gamma,X\rangle dt,
\end{align*}
where the integrand is computed at $s=0$, and $\|T(t,0)\| = \|\dot\gamma(t)\| = 1$.
\end{proof}
\begin{lemma}\label{l:construction}
Let $\nabla$ be a metric connection. Then there always exists an associated connection $D$ which is metric with metric adjoint $\hat D$, given by
\begin{equation}
D_XY := \nabla_X Y + J_X Y .
\end{equation}
\end{lemma}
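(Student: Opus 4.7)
The plan is to verify directly that the explicit formula $D_X Y = \nabla_X Y + J_X Y$ defines a metric connection whose adjoint is also metric. The key observation, which drives everything, is that the tensor $J_X$ is a \emph{skew-symmetric} endomorphism of $T\M$ for each $X$: indeed, by the defining identity and the skew-symmetry of $T$ in its two arguments,
\begin{equation}
\langle J_X Y, Z\rangle + \langle J_X Z, Y\rangle = \langle X, T(Y,Z)\rangle + \langle X, T(Z,Y)\rangle = 0.
\end{equation}

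First I would check that $D$ is itself metric. Since $\nabla$ is metric, one has $\langle \nabla_X Y, Z\rangle + \langle Y, \nabla_X Z\rangle = X\langle Y, Z\rangle$, and adding the pointwise skew-symmetric term $J_X$ contributes $\langle J_X Y, Z\rangle + \langle Y, J_X Z\rangle = 0$, so $D$ remains metric. Next I would compute the torsion of $D$: using $\nabla$'s torsion $T$,
\begin{equation}
\Tor^D(X,Y) = D_X Y - D_Y X - [X,Y] = T(X,Y) + J_X Y - J_Y X,
\end{equation}
which yields the adjoint
\begin{equation}
\hat D_X Y = D_X Y - \Tor^D(X,Y) = \nabla_X Y - T(X,Y) + J_Y X.
\end{equation}

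Finally I would verify that $\hat D$ is metric by a direct expansion of $\langle \hat D_X Y, Z\rangle + \langle Y, \hat D_X Z\rangle$. The $\nabla$-part produces $X\langle Y,Z\rangle$; the $J$-part contributes $\langle J_Y X, Z\rangle + \langle Y, J_Z X\rangle = \langle Y, T(X,Z)\rangle + \langle Z, T(X,Y)\rangle$ by the definition of $J$; and the $T$-part contributes $-\langle T(X,Y), Z\rangle - \langle Y, T(X,Z)\rangle$. The terms involving $T(X,Z)$ cancel, and the remaining terms cancel by the skew-symmetry $\langle T(X,Y), Z\rangle = -\langle Z, T(X,Y)\rangle$... wait, actually $\langle T(X,Y), Z\rangle$ and $\langle Z, T(X,Y)\rangle$ are literally equal, so they cancel as written since one enters with a minus sign. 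Thus $\hat D$ is metric, completing the proof.

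There is no real obstacle here: the entire argument is a short calculation whose engine is the skew-symmetry of $T$ in its two arguments, which in turn forces $J_X$ to be a skew endomorphism. The only point that requires a bit of care is keeping track of the placement of $X,Y,Z$ in the various uses of the defining identity $\langle J_X Y, Z\rangle = \langle X, T(Y,Z)\rangle$ when expanding $\hat D$; organising the computation to separate the $\nabla$-contribution, the $J$-contribution and the $T$-contribution makes the cancellations transparent.
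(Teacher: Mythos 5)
Your proof is correct and follows essentially the same route as the paper: skew-symmetry of $J_X$ gives metric compatibility of $D$, you derive the adjoint $\hat D_X Y = \nabla_X Y - T(X,Y) + J_Y X$ exactly as in the text, and then check metric compatibility of $\hat D$. The only cosmetic difference is that the paper verifies $\langle B_X Y, Y\rangle = 0$ for the correction term $B_X Y := -T(X,Y) + J_Y X$ (equivalent to your off-diagonal cancellation by polarization), whereas you expand $\langle \hat D_X Y, Z\rangle + \langle Y, \hat D_X Z\rangle$ in full.
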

\begin{proof}
Since $J_X : T\M \to T\M$ is skew-symmetric, $D$ is metric. Let $\Tor$ be the torsion of $D$. We have:
\begin{align*}
\hat{D}_X Y & = D_X Y  - \Tor(X,Y) = \nabla_X Y - T(X,Y) - J_X Y + J_Y X \\
& = \nabla_X Y -T(X,Y) + J_Y X.
\end{align*}
Since $B_X Y := -T(X,Y) + J_Y X$ satisfies $\langle B_X Y,Y \rangle = 0$, $\hat{D}$ is metric.
\end{proof}
We also highlight the following observation which is simple to verify.
\begin{lemma}\label{l:constructionskew}
The adjoint $\hat{D}$ of a metric connection $D$ is metric if and only if the tensor $X,Y,Z\mapsto \langle \Tor(X,Y),Z\rangle$ is completely skew-symmetric.
\end{lemma}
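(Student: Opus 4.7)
The plan is to derive the metric defect of $\hat{D}$ as a simple algebraic expression in the torsion, then read off the required symmetry.

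First I would compute $\langle \hat{D}_X Y, Z\rangle + \langle Y, \hat{D}_X Z\rangle$ by substituting the definition $\hat{D}_X Y = D_X Y - \Tor(X,Y)$. Since $D$ is metric, the two $D$-terms combine to $X\langle Y, Z\rangle$, and the computation yields
\begin{equation}
\langle \hat{D}_X Y, Z\rangle + \langle Y, \hat{D}_X Z\rangle = X\langle Y, Z\rangle - \langle \Tor(X,Y), Z\rangle - \langle \Tor(X,Z), Y\rangle.
\end{equation}
Consequently, $\hat{D}$ is a metric connection if and only if
\begin{equation}
\langle \Tor(X,Y), Z\rangle = -\langle \Tor(X,Z), Y\rangle, \qquad \forall X,Y,Z \in \Gamma(T\M),
\end{equation}
i.e.\ the trilinear form $\tau(X,Y,Z) := \langle \Tor(X,Y), Z\rangle$ is skew-symmetric in the second and third slots.

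The remaining observation is that $\tau$ is by definition already skew-symmetric in its first two slots, since the torsion $\Tor$ of any connection satisfies $\Tor(X,Y) = -\Tor(Y,X)$. A trilinear form which is antisymmetric under swapping slots $(1,2)$ and under swapping slots $(2,3)$ is automatically antisymmetric under every transposition (the transpositions $(12)$ and $(23)$ generate the symmetric group $S_3$), hence completely skew-symmetric. Conversely, complete skew-symmetry trivially implies the antisymmetry required above.

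There is no serious obstacle: the argument is a one-line computation followed by an elementary remark on trilinear alternating forms. The only point worth highlighting in the write-up is that the built-in antisymmetry of torsion in the first two arguments is what upgrades the metric-compatibility condition from a single antisymmetry to full alternation of the associated $(0,3)$-tensor.
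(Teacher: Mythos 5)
Your proof is correct, and the paper itself offers no proof—it only remarks that the lemma is ``simple to verify''—so there is nothing to compare against. Your argument (compute the metric-compatibility defect of $\hat{D}$ to get skew-symmetry in the last two slots, then use the built-in skew-symmetry of torsion in the first two slots together with the fact that $(12)$ and $(23)$ generate $S_3$) is exactly the natural verification.
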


From now on, we assume that $D$ is a metric connection such that $\hat{D}$ is metric, which is always possible thanks to  Lemma~\ref{l:construction}. In particular, the geodesic equation  is $D_{\dot\gamma}\dot\gamma = 0$ or, equivalently, $\hat{D}_{\dot\gamma}\dot\gamma = 0$.

\begin{lemma}
Let $\gamma :[0,r] \to \M$ be a geodesic. A vector field $V$ along $\gamma$ is a Jacobi field if and only if
\begin{equation}
D_{\dot\gamma}\hat{D}_{\dot\gamma} V + R(V,\dot\gamma)\dot\gamma = 0.
\end{equation}
\begin{proof}
Let $x = \gamma(0)$, and $\Gamma :[0,r] \times (-\varepsilon,\varepsilon) \to \M$ be a one-parameter variation associated with $V$. In particular $\Gamma(t,0) = \gamma(t)$, $\Gamma(t,s)$ is a geodesic for all fixed $s \in (-\varepsilon,\varepsilon)$, and $\partial_s \Gamma(t,0) = V(t)$. Let $T = \Gamma_* \partial_t$ and $S = \Gamma_* \partial_s$. Using the identity $D_T T = 0$ and \eqref{STTswitch}, we deduce
\begin{equation}
0 = D_S D_T T  = R(S,T)T + D_T D_S T = R(S,T)T + D_T \hat{D}_T S.
\end{equation}
Computing the above at $s=0$ yields the statement.
\end{proof}
\end{lemma}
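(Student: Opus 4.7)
The plan is to derive the Jacobi equation by linearising the geodesic equation $D_T T = 0$ along a one-parameter family of geodesics. For the forward direction I would start from the definition: if $V$ is a Jacobi field there exists a smooth map $\Gamma : [0,r] \times (-\ve,\ve) \to \M$ with $\Gamma(t,0) = \gamma(t)$ and $\partial_s \Gamma(t,0) = V(t)$, such that each slice $t \mapsto \Gamma(t,s)$ is a $D$-geodesic. Setting $T = \Gamma_* \partial_t$ and $S = \Gamma_* \partial_s$, I would first exploit that coordinate vector fields commute, so $[S,T] = 0$, and combine this with the definitions of $\Tor$ and of the adjoint $\hat D_X Y = D_X Y - \Tor(X,Y)$ to produce the ``adjoint symmetry'' identity $D_S T = \hat D_T S$ along the variation; this plays the role that the torsion-free symmetry $\nabla_S T = \nabla_T S$ plays in the Levi-Civita setting.

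Next, differentiating $D_T T = 0$ in the direction of $S$ and applying the curvature commutator $D_S D_T T - D_T D_S T = R(S,T)T$ (valid because $[S,T]=0$) would give $R(S,T) T + D_T D_S T = 0$. Substituting the adjoint symmetry into the last term yields $D_T \hat D_T S + R(S,T) T = 0$, and evaluating at $s = 0$ produces exactly the claimed equation $D_{\dot\gamma} \hat D_{\dot\gamma} V + R(V,\dot\gamma)\dot\gamma = 0$.

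For the converse I would invoke a dimension count together with uniqueness for linear second-order ODEs. The displayed equation is linear in $V$ and of second order along $\gamma$, so its solution space has dimension $2 \dim \M$, parameterised by the initial data $(V(0), \hat D_{\dot\gamma} V(0))$. Given any such data I would construct a genuine geodesic variation: pick a curve $c :(-\ve,\ve) \to \M$ with $c(0) = \gamma(0)$ and $\dot c(0) = V(0)$, and a smooth vector field $X$ along $c$ with $X(0) = \dot\gamma(0)$ and its derivative at $s = 0$ chosen so that the induced variation field has the correct $\hat D_{\dot\gamma}$-derivative at $t = 0$; then taking $\Gamma(t,s)$ to be the $D$-geodesic issued from $c(s)$ with initial velocity $X(s)$ produces a smooth variation through $D$-geodesics whose variation field, by the forward implication just proved, satisfies the Jacobi equation with the prescribed initial data, and therefore coincides with $V$.

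The main obstacle will be bookkeeping the asymmetry between $D$ and $\hat D$: unlike in the Levi-Civita case the two factors in the second-order term are different connections, and one must verify carefully that the torsion corrections assemble into the combination $D \hat D$ rather than $DD$ or $\hat D D$. Once the key identity $D_S T = \hat D_T S$ is in hand the remainder is a standard linearisation, but the placement of $D$ versus $\hat D$ has to be tracked through each computation.
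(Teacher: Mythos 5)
Your forward direction is exactly the paper's argument: use $[S,T]=0$ to get $D_S T = \hat D_T S$, differentiate $D_T T = 0$ in the $S$-direction, apply the curvature commutator, and evaluate at $s=0$. The converse via the ODE dimension count and construction of a geodesic variation with prescribed initial data is standard and correct; the paper leaves this direction implicit, so your write-up is in fact slightly more complete, but the substance and method are the same.
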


\begin{lemma}
Let $\gamma :[0,r] \to \M$ be a unit-speed geodesic joining $x \in \M$ with $y \notin \mathbf{Cut}(x)$. Let $r = d(x,\cdot)$, and $X \in T_{\gamma(r)} \M$, with $X \perp \dot\gamma(r)$. Then
\begin{equation}
\mathrm{Hess}^D(r)(X,X) = \int_0^r \left(\langle D_{\dot\gamma} V, \hat{D}_{\dot\gamma} V\rangle - \langle R(V,\dot\gamma) \dot\gamma,V \rangle \right)dt,
\end{equation}
where, $\mathrm{Hess}^D$ denotes the Hessian with respect to the connection $D$ and, in the integrand, $V$ denotes the Jacobi field along $\gamma$ such that $V(0) = 0$ and $V(r) = X$.
\end{lemma}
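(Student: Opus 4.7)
The plan is to construct a natural variation of $\gamma$ by minimizing geodesics through a curve tangent to $X$ at $y$, identify $\mathrm{Hess}^D(r)(X,X)$ with a boundary term of the form $\langle \hat D_{\dot\gamma} V(r), X\rangle$, and then show that the right-hand side of the claimed identity collapses to the same boundary term via a total-derivative trick. The main technical point throughout is the careful bookkeeping between $D$ and its adjoint $\hat D$, using that their common torsion is totally skew (Lemma \ref{l:constructionskew}).

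First I would fix a smooth curve $\eta:(-\varepsilon,\varepsilon)\to \M$ with $\eta(0)=y$ and $\dot\eta(0)=X$. Since $y\notin\mathbf{Cut}(x)$, for $\varepsilon$ small enough the minimizing geodesic $\gamma_s:[0,r(\eta(s))]\to \M$ from $x$ to $\eta(s)$ depends smoothly on $s$, and I would set $\Gamma(t,s):=\gamma_s\bigl(t\,r(\eta(s))/r\bigr)$ on $[0,r]\times(-\varepsilon,\varepsilon)$. Writing $T=\partial_t\Gamma$ and $S=\partial_s\Gamma$, each slice $\Gamma(\cdot,s)$ is a constant-speed geodesic, so $D_TT=0$, and $\|T\|$ depends only on $s$ with $\|T(0)\|=1$. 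Since $[S,T]=0$ on the surface, the identity $D_ST=\hat D_TS$ from the first lemma of this appendix applies. Moreover $V:=S|_{s=0}$ is a Jacobi field along $\gamma$ with $V(0)=0$ and $V(r)=X$, uniqueness being ensured by the absence of conjugate points along $\gamma$.

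Next I would compute $\mathrm{Hess}^D(r)(X,X)=\langle D_X\nabla r,X\rangle|_y$ using the identity $\nabla r(\eta(s))=T(r,s)/\|T(s)\|$, valid because $(r/r_s)T(r,s)$ is tangent to $\gamma_s$ at its endpoint and has unit length. The hypothesis $X\perp\dot\gamma(r)$ forces $\tfrac{d}{ds}\|T\||_{s=0}=\tfrac{1}{r}\langle \dot\gamma(r),X\rangle =0$, and the covariant derivative along $\eta$ at $s=0$ therefore reduces to $D_X\nabla r|_y=D_ST|_{(r,0)}=\hat D_{\dot\gamma}V(r)$. Using the metricity of $D$ this yields $\mathrm{Hess}^D(r)(X,X)=\langle \hat D_{\dot\gamma}V(r),X\rangle$.

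It remains to verify that the right-hand side of the statement equals the same boundary term. Writing $D_{\dot\gamma}V=\hat D_{\dot\gamma}V+\Tor(\dot\gamma,V)$ and using the Jacobi equation $D_{\dot\gamma}\hat D_{\dot\gamma}V=-R(V,\dot\gamma)\dot\gamma$ together with the metricity of $D$ to expand $\tfrac{d}{dt}\langle \hat D_{\dot\gamma}V,V\rangle$, the two torsion terms that appear turn out to coincide by symmetry of the inner product and therefore cancel, yielding the total-derivative identity
\begin{equation*}
\langle D_{\dot\gamma}V,\hat D_{\dot\gamma}V\rangle-\langle R(V,\dot\gamma)\dot\gamma,V\rangle=\frac{d}{dt}\langle \hat D_{\dot\gamma}V,V\rangle.
\end{equation*}
Integrating from $0$ to $r$ and inserting $V(0)=0$, $V(r)=X$ produces $\langle \hat D_{\dot\gamma}V(r),X\rangle$, matching the Hessian and closing the argument. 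The step I expect to require the most care is the torsion cancellation, which is precisely where the hypothesis that both $D$ and $\hat D$ are metric (i.e.\ that the torsion is totally skew-symmetric) enters essentially; without this, residual torsion terms would spoil the collapse to a total derivative.
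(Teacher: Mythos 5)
Your proof is correct, but it takes a genuinely different route from the paper's. The paper chooses $\sigma$ to be a $D$-geodesic with $\dot\sigma(0)=X$, identifies $\mathrm{Hess}^D(r)(X,X)$ with the second variation $\tfrac{d^2}{ds^2}|_{s=0}\ell(\Gamma(\cdot,s))$ of arc-length, and integrates by parts inside the integral; the geodesic hypothesis on $\sigma$ is then used to kill the boundary term $\langle D_{\dot\sigma}\dot\sigma(0),\dot\gamma(r)\rangle$. You instead allow an arbitrary transversal curve $\eta$, compute the Hessian directly as $\langle D_X\nabla r,X\rangle$ via the identity $\nabla r(\eta(s))=T(r,s)/\|T(\cdot,s)\|$ (with $X\perp\dot\gamma(r)$ killing the contribution of the normalization factor), obtain the boundary form $\langle\hat D_{\dot\gamma}V(r),X\rangle$, and finally observe that the index integrand is $\tfrac{d}{dt}\langle\hat D_{\dot\gamma}V,V\rangle$ by the Jacobi equation. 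In effect, you prove the paper's post-lemma remark first and then derive the lemma from it, whereas the paper goes in the opposite direction. Your argument is arguably the more economical of the two and cleanly separates the geometric identification of the Hessian with a boundary term from the purely algebraic total-derivative identity. What it gives up is the explicit index-form expression $I(W,W)$ for a general test field $W$, which the paper's second-variation computation produces as a by-product and which is what Lemma~\ref{Lemma index} needs. One small simplification: if you expand $\tfrac{d}{dt}\langle\hat D_{\dot\gamma}V,V\rangle$ using the metricity of $D$ rather than $\hat D$, no torsion terms arise at all,
\begin{equation}
\frac{d}{dt}\langle\hat D_{\dot\gamma}V,V\rangle=\langle D_{\dot\gamma}\hat D_{\dot\gamma}V,V\rangle+\langle\hat D_{\dot\gamma}V,D_{\dot\gamma}V\rangle=-\langle R(V,\dot\gamma)\dot\gamma,V\rangle+\langle D_{\dot\gamma}V,\hat D_{\dot\gamma}V\rangle,
\end{equation}
so the total skew-symmetry of the torsion is needed only to identify $\langle\hat D_{\dot\gamma}V(r),X\rangle$ with $\langle D_{\dot\gamma}V(r),X\rangle$, as in the paper's remark, and not in the cancellation you describe.
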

We remark that using the Jacobi equation and the fact that $D$ is metric, one has
\begin{equation}
\mathrm{Hess}^D(r)(X,X) =\langle X,D_{\dot{\gamma}} V(r)\rangle.
\end{equation}

\begin{proof}
Consider a geodesic curve $\sigma : (-\varepsilon,\varepsilon) \to \M$ with $\sigma(0) = \gamma(r)$, and with $\dot\sigma(0) = X$. For any $s \in (-\varepsilon,\varepsilon)$ let $t\mapsto \Gamma(t,s)$ be the unique length-parametrized geodesic joining $\gamma(0)$ with $\sigma(s)$. This family is well defined and smooth since $\gamma(r)$ is outside of the cut locus, provided that $\varepsilon$ is small. Let $S= \Gamma_* \partial_s$ and $T = \Gamma_*\partial_t $. Indeed, $T(t,0) = \dot\gamma(t)$ and $S(t,0) = V(t)$. Notice that by construction $\|T(t,0)\|=1$. Furthermore, the boundary conditions and Jacobi equation imply that $\langle V, \dot\gamma\rangle = 0$ for all times. Then, with the understanding that everything is computed at $s=0$, we have
\begin{align*}
D^2 r (X,X) & = \frac{d^2}{ds^2} \int_0^r \| T\| dt = \frac{d}{ds} \int_0^r \frac{1}{\|T\|} \langle D_ST,T\rangle dt \\
& = \frac{d}{ds} \int_0^r \frac{1}{\|T\|} \langle D_TS + \Tor(S,T),T\rangle dt \\
& = \int_0^r \left( -\langle D_ S T,T\rangle \langle D_TS,T\rangle + \langle D_S D_T S,T\rangle + \langle D_T S, D_S T\rangle\right) dt \\
& = \int_0^r \langle D_S D_T S,T\rangle + \langle D_T S, \hat{D}_T S\rangle\\
& = \int_0^r \left(\langle D_T S, \hat{D}_T S\rangle + R(S,T,S,T) + \langle D_T D_S S,T\rangle \right) dt\\
& = \langle D_{\dot\sigma}\dot\sigma(0), \dot\gamma(r)\rangle + \int_0^r \left(  \langle D_T S, \hat{D}_T S\rangle - R(S,T,T,S) \right) dt\\
& = \int_0^r \left(\langle D_{\dot\gamma} V, \hat{D}_{\dot\gamma} V\rangle - R(V,\dot\gamma,\dot\gamma,V)\right)dt,
\end{align*}
where, in the second-to-last line, we used the fact that $\sigma$ is a geodesic.
\end{proof}

\begin{definition}
Let $\gamma :[0,r] \to \M$ be a unit-speed geodesic. For any vector field $W$ along $\gamma$, we define the index form as
\begin{equation}
I(W,W) = \int_0^r \left(\langle D_{\dot\gamma} W, \hat{D}_{\dot\gamma} W\rangle - \langle R(W,\dot\gamma)\dot\gamma,W \rangle \right)dt.
\end{equation}
\end{definition}
We emphasize the fact that $I(W,W)$ coincides for all compatible connections $D$ with skew-symmetric torsion, including the Levi-Civita connection of $g$. We recall the classical index, Lemma see e.g.~\cite[Cor.\ 6.2]{Lee97} and \cite[Thm.\ 1.1.11]{Wan14}

\begin{lemma}\label{Lemma index}
Let $\gamma :[0,r] \to \M$ be a unit-speed geodesic free of conjugate points. Let $W$ be a vector field along $\gamma$, and let $V$ be the Jacobi field along $\gamma$ such that $V(0) = W(0)$ and $V(r) = W(r)$. Then
\begin{equation}
I(V,V) \leq I(W,W),
\end{equation}
with equality if and only if $V=W$.
\end{lemma}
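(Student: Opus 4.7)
My plan is to reduce the statement to the classical Riemannian index lemma, exploiting the observation emphasized in the excerpt just before the statement: the value of $I(W,W)$ is the same for every compatible metric connection $D$ with completely skew-symmetric torsion, and in particular agrees with its value for the Levi-Civita connection $\nabla^g$ (where $R^D$ is replaced by $R^g$). Since completely skew-symmetric torsion does not change the symmetric part of the connection, the geodesic equation $D_{\dot\gamma}\dot\gamma=0$ coincides with $\nabla^g_{\dot\gamma}\dot\gamma=0$, so $\gamma$ is a Riemannian geodesic. Moreover, a Jacobi field is geometrically the variation field of a one-parameter family of geodesics, and this is an object that depends only on the underlying curves, not on which compatible connection is used to describe it. Consequently, the notions of Jacobi field and conjugate point are invariants of $(\M,g)$, and the hypothesis that $\gamma$ is free of conjugate points passes to the Riemannian side verbatim.

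Under this reduction, $I(W,W)$ becomes the standard Riemannian index form
\begin{equation*}
I(W,W) \;=\; \int_0^r\Bigl(\|\nabla^g_{\dot\gamma}W\|^2 - \langle R^g(W,\dot\gamma)\dot\gamma, W\rangle\Bigr)\,dt,
\end{equation*}
and the lemma becomes the classical Riemannian index lemma, proved for instance in \cite{Lee97,Wan14}. For completeness I would recall the three standard steps: (i) write $W = V + U$ with $U := W-V$, so that $U(0)=U(r)=0$; (ii) by polarization, $I(W,W) = I(V,V) + 2\,B(V,U) + I(U,U)$, and the cross term is killed by integration by parts using the Riemannian Jacobi equation satisfied by $V$ together with the vanishing of $U$ at the endpoints; (iii) the positivity $I(U,U)\geq 0$, with equality iff $U\equiv 0$, is obtained by choosing a basis of Jacobi fields $J_1,\dots,J_n$ with $J_i(0)=0$—these span the fibre at every $t\in(0,r]$ precisely because $\gamma$ has no conjugate points—writing $U(t) = \sum f_i(t)J_i(t)$, and carrying out a Riccati-type manipulation that collapses the integrand to $\bigl|\sum_i f_i'(t)J_i(t)\bigr|^2$.

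The only step that truly uses the structure of the paper's setting is the invariance of $I(W,W)$ under change of compatible connection. This is asserted in the excerpt and can be verified by a direct algebraic computation using $D = \nabla^g + \tfrac12\,\Tor$ (which holds because $\Tor$ is completely skew-symmetric, so the contorsion equals $\tfrac12\Tor$), together with the standard formula relating $R^D$ to $R^g$ by terms quadratic in $\Tor$ and linear in $\nabla^g\Tor$; these correction terms cancel in the combination $\langle D_{\dot\gamma}W, \hat D_{\dot\gamma}W\rangle - \langle R^D(W,\dot\gamma)\dot\gamma, W\rangle$. I regard this as the main (and only) conceptual obstacle—once it is observed, the result is immediate from the classical references and no further work is needed, so I would simply invoke those references rather than reproduce the Riccati calculation in full.
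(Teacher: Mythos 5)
Your reduction to the classical Riemannian index lemma via the invariance of $I(W,W)$ under change of compatible metric connection with completely skew-symmetric torsion is exactly the argument the paper relies on; the paper states that observation and simply cites \cite{Lee97,Wan14} without reproducing the classical proof. Your sketch of the algebraic verification that the torsion corrections cancel in $\langle D_{\dot\gamma}W,\hat D_{\dot\gamma}W\rangle-\langle R^D(W,\dot\gamma)\dot\gamma,W\rangle$ is correct and fills in a detail the paper leaves implicit, but the approach is the same.
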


We summarize all the above results in a single theorem.
\begin{theorem}\label{t:index}
Let $(\M,g)$ be a Riemannian manifold, with metric connection $\nabla$. There always exists a metric connection $D$ which also has a metric adjoint, given by
\begin{equation}
D_X Y = \nabla_X Y + J_X Y.
\end{equation}
A length-parametrized curve $\gamma :[0,r] \to \M$ is a geodesic if and only if $D_{\dot\gamma}\dot\gamma = 0$ or, equivalently, if $\hat{D}_{\dot\gamma}\dot\gamma = 0$, and a vector field $V$ along $\gamma$ is a Jacobi field if and only if
\begin{equation}
D_{\dot\gamma} \hat{D}_{\dot\gamma} V + R(V,\dot\gamma)\dot\gamma =0,
\end{equation}
where $R$ is the curvature associated with $D$. 

The index lemma holds, that is if $\gamma(r)\notin \Cut(\gamma(0))$, for any vector field $W \perp \dot\gamma$ along $\gamma$ such that $V(0) =0$ it holds
\begin{equation}
\mathrm{Hess}^D(r)(V,V) \leq I(W,W),
\end{equation}
with equality if and only if $W$ is a Jacobi field perpendicular to $\dot\gamma$.
\end{theorem}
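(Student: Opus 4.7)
The theorem is essentially a synthesis of the four lemmas preceding it in this appendix, so my plan is to stitch them together and verify that no further work beyond invoking them is needed. First, setting $D_XY = \nabla_XY + J_XY$, the skew-symmetry of $Y \mapsto J_XY$ (immediate from the definition of $J$ in terms of the torsion of $\nabla$) gives that $D$ is metric, and a short computation writes $\hat{D}_XY = \nabla_XY - T(X,Y) + J_YX$, whose correction to $\nabla$ pairs skew-symmetrically with $Y$, so $\hat{D}$ is metric as well; this is Lemma \ref{l:construction}, whose conceptual reformulation in Lemma \ref{l:constructionskew} shows that $\hat{D}$-metricity is equivalent to the torsion of $D$ being completely skew-symmetric.

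For the geodesic characterization, the first-variation formula at the top of the appendix identifies length-parametrized geodesics with the zero set of $\nabla_{\dot\gamma}\dot\gamma + J_{\dot\gamma}\dot\gamma = D_{\dot\gamma}\dot\gamma$; and $\hat{D}_{\dot\gamma}\dot\gamma = D_{\dot\gamma}\dot\gamma$ since torsion is antisymmetric in its two arguments. The Jacobi equation $D_{\dot\gamma}\hat{D}_{\dot\gamma}V + R(V,\dot\gamma)\dot\gamma = 0$ is then the corresponding Jacobi lemma applied to $D$: along a geodesic variation one has $D_ST = \hat{D}_TS$ (since the coordinate fields commute, so switching $S$ and $T$ in a covariant derivative costs exactly the torsion), and substituting into the curvature identity $D_SD_TT = R(S,T)T + D_TD_ST$ and evaluating at $s=0$ yields the assertion. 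The identification $\mathrm{Hess}^D(r)(X,X) = I(V,V)$ for $V$ the Jacobi field with $V(0)=0$, $V(r) = X$ is then the Hessian lemma already proved above, via a transverse geodesic variation together with two applications of $D_ST = \hat{D}_TS + \Tor(S,T)$ and a curvature symmetrization.

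For the final index inequality, my plan is to appeal to the classical Lemma \ref{Lemma index} together with the observation emphasized in the text that $I(W,W)$ is unchanged when $D$ is replaced by any other compatible metric connection with skew-symmetric torsion, in particular by the Levi-Civita connection of $g$. Combined with the Hessian identity, this yields
\[
\mathrm{Hess}^D(r)(W(r),W(r)) = I(V,V) \le I(W,W),
\]
with equality iff $W = V$, which is the desired statement. The only step that is not pure bookkeeping, and hence the main obstacle I foresee, is verifying this connection-invariance of $I$: if $D' = D + A$ is another metric connection with skew-symmetric torsion, the variation of $\langle D_{\dot\gamma}W,\hat{D}_{\dot\gamma}W\rangle$ must exactly cancel the variation of $\langle R(W,\dot\gamma)\dot\gamma,W\rangle$. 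I would carry this out by expanding both sides using the metricity of $A$ and the first Bianchi identity adjusted for torsion; once this is done, the classical Levi-Civita index lemma applies directly and the proof is complete.
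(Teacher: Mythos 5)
Your proposal is correct and takes exactly the route the paper intends: Theorem \ref{t:index} is explicitly labelled as a summary of the preceding appendix lemmas, and the paper gives no further proof beyond that synthesis. Your stitching of Lemma \ref{l:construction} (existence of $D$ with metric adjoint), the first-variation lemma (geodesic characterization, plus $\hat D_{\dot\gamma}\dot\gamma = D_{\dot\gamma}\dot\gamma$ since $\Tor(\dot\gamma,\dot\gamma)=0$), the Jacobi-equation lemma, the Hessian lemma, and Lemma \ref{Lemma index} reproduces the intended argument with no gaps.

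The one place where you are genuinely adding something beyond bookkeeping is where you flag the connection-invariance of the index form $I$: the paper asserts it in the remark just before Lemma \ref{Lemma index} but does not prove it, and the cited references prove the index lemma for the Levi--Civita connection. Your plan to expand $D' = D + A$ with $A$ skew and use a Bianchi identity works, but there are two shorter routes worth recording. Algebraically, writing $D = \nabla^g + A$ with $A_X$ skew and $A_XY=-A_YX$, one has along a geodesic
\begin{equation}
\langle D_{\dot\gamma}W,\hat D_{\dot\gamma}W\rangle = \|\nabla^g_{\dot\gamma}W\|^2 - \|A_{\dot\gamma}W\|^2,
\end{equation}
and a direct (if somewhat tedious) computation shows that $R^D(W,\dot\gamma,\dot\gamma,W)-R^{\nabla^g}(W,\dot\gamma,\dot\gamma,W)=-\|A_{\dot\gamma}W\|^2$, so the two corrections cancel. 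More conceptually, both $I_D$ and $I_{\nabla^g}$ arise as the second variation of arclength at the geodesic $\gamma$ with variation field $W$; the only connection-dependent boundary term is $\langle D_SS,T\rangle\rvert_{t=r}$, and the difference $\langle D_SS - \nabla^g_SS,T\rangle = \langle A_SS,T\rangle$ vanishes because $A$ is skew in its first two arguments. Either way the step closes, and the proof is complete.
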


\bibliographystyle{habbrv}
\bibliography{biblio}

\Addresses

\end{document}